\newtheorem{theorem}{Theorem}[section]
\newtheorem{lemma}[theorem]{Lemma}
\newtheorem{proposition}[theorem]{Proposition}
\newtheorem{corollary}[theorem]{Corollary} 
\theoremstyle{definition}  
\newtheorem{example}[theorem]{Example}
\newtheorem{remark}[theorem]{Remark}
\newcommand{\Tr}{\text{Tr}}
\newcommand{\id}{\text{id}}
\newcommand{\Fun}{\text{Fun}}
\newcommand{\FPdim}{\text{FPdim}} 
\newcommand{\End}{\text{End}} 
\newcommand{\Irr}{\text{Irr}} 
\renewcommand{\Vec}{\text{Vec}}
\newcommand{\Hom}{\text{Hom}}
\newcommand{\Rep}{\text{Rep}}
\newcommand{\bF}{{\bar F}}
\newcommand{\C}{\mathcal{C}}
\newcommand{\D}{\mathcal{D}}
\newcommand{\Z}{\mathcal{Z}}
\newcommand{\M}{\mathcal{M}}
\newcommand{\A}{\mathcal{A}}
\newcommand{\N}{\mathcal{N}}
\newcommand{\V}{\mathcal{V}}
\renewcommand{\O}{\mathcal{O}}
\newcommand{\be}{\mathbf{1}}
\renewcommand{\be}{\mathbf{1}}
\newcommand{\BZ}{{\mathbb Z}}
\newcommand{\BC}{{\mathbb C}}
\newcommand{\BQ}{{\mathbb Q}}
\newcommand{\bt}{\boxtimes}
\newcommand{\ot}{\otimes}
\begin{document}

\title{Pivotal fusion categories of rank 3}

\author{Victor Ostrik}
\address{V.O.: Department of Mathematics,
University of Oregon, Eugene, OR 97403, USA}
\email{vostrik@math.uoregon.edu}
\address{D.N.: Department of Mathematics and Statistics,
University of New Hampshire,  Durham, NH 03824, USA}
\email{nikshych@math.unh.edu}

\begin{abstract} We classify all fusion categories of rank 3 that admit a pivotal structure over an
algebraically closed field of characteristic zero. Also in the Appendix (joint with D.~Nikshych) we
give some restrictions on Grothendieck rings of near-group categories.
\end{abstract}

\date{\today} 
\maketitle  

\section{Introduction}

Fusion categories introduced in \cite{ENO} are in a sense simplest tensor categories. The problem of classification of fusion categories while
interesting seems to be well out of reach at the moment. Thus it is natural to attempt a classification of fusion categories which are ``small''
in certain sense. One classical interpretation of ``small'' coming from the theory of subfactors is that the category in question contains an algebra object of small dimension; we refer the reader to \cite{JMS} for a nice survey of achievements in this direction.
Another possibility is that ``small'' categories have integral Frobenius-Perron dimension with small number of prime factors;
some results in this direction were obtained in \cite{ENOw}.

In this paper we follow approach initiated in \cite{O2} in which ``small'' categories have small {\em rank}.
We recall that rank of a fusion category is just a number of isomorphism classes of its simple objects.
The fusion categories of rank 1 are trivial and the fusion categories of rank 2 were classified in \cite{O2}. 
Under an additional assumption that the category in question is ribbon some further classification results
were obtained in \cite{O3}, \cite{RSW}, \cite{RS}. Also recently a general finiteness conjecture
on the number of modular tensor categories of fixed rank was proved, see \cite{BNRS}.

In this paper we give a classification of fusion categories of rank 3 that admit a pivotal structure
(this is relatively mild assumption: it is conjectured in \cite[Conjecture 2.8]{ENO} that any fusion
category admits a pivotal structure). Thus we prove the following result conjectured in \cite{O3}:

\begin{theorem} \label{uraa}
Let $\C$ be a fusion category of rank 3 admitting a pivotal structure over an
algebraically closed field of characteristic zero.
Then $\C$ is equivalent to one of the following:

{\em (i)} pointed category with underlying group $\BZ/3\BZ$;

{\em (ii)} even part of the category associated with quantum $sl_2$ at $7$th root of unity, see e.g.
\cite[Section 4.4]{O3};

{\em (iii)} Ising category, see e.g. \cite[Appendix B]{DGNO};

{\em (iv)} category of representations of the group $S_3$ or its twisted version, see \cite[Section 4.4]{O3};

{\em (v)} category associated with subfactor of type $E_6$ or its Galois conjugate, see e.g. \cite{HH}.
\end{theorem}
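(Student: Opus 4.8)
The plan is to first pin down the possible Grothendieck (fusion) rings of $\C$ and then to categorify each admissible one. Write the three simple objects as $\be, X, Y$. The pivotal structure supplies a multiplicative dimension function $d\colon \mathrm{Gr}(\C)\otimes\BC\to\BC$; its values $d(X)$, $d(Y)$ are eigenvalues of the fusion matrices $N_X$, $N_Y$ and hence algebraic integers, in fact cyclotomic integers subject to the integrality and divisibility constraints on the formal codegrees $\sum_i|\chi(X_i)|^2$ attached to the characters $\chi$ of the ring. These arithmetic restrictions, together with the positivity of Frobenius--Perron dimensions, are the engine that cuts an a priori infinite list of fusion rings down to finitely many. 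I would organize the analysis according to the duality involution on $\{\be,X,Y\}$ and the number of invertible simples.

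If every simple is invertible then $\C$ is pointed with group $\BZ/3\BZ$, giving case (i). If exactly one nontrivial invertible $\eps$ exists, then $\{\be,\eps\}\cong\BZ/2\BZ$ and $\eps\otimes Y\cong Y$, so $Y$ is self-dual and we are forced into the near-group fusion rule $Y\otimes Y\cong\be\oplus\eps\oplus nY$ for some $n\ge 0$; here I would invoke the restrictions on near-group Grothendieck rings from the Appendix to conclude $n\in\{0,1,2\}$, the corresponding Frobenius--Perron dimension of $Y$ being $\sqrt2,\,2,\,1+\sqrt3$. When $\be$ is the only invertible, a short argument using Frobenius reciprocity and associativity shows that a non-self-dual pair $Y=X^{*}\ne X$ forces the pointed ring of $\BZ/3\BZ$ (hence $\FPdim X=1$, a contradiction); therefore both $X$ and $Y$ are self-dual. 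The self-dual fusion matrices are then symmetric, and the ring is determined by nonnegative integers $p,q,r,s$ with $X\otimes X\cong\be\oplus pX\oplus qY$, $X\otimes Y\cong qX\oplus rY$, $Y\otimes Y\cong\be\oplus rX\oplus sY$ subject to the single associativity relation $pr+qs=q^{2}+r^{2}-1$.

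The main obstacle is that this last family, as a family of based rings, is genuinely infinite (for instance $p=2$, $q=1$, $r=2$, $s=0$ is associative), so it cannot be eliminated by ring axioms alone. Here the categorical input is decisive: because the $N_X$ are symmetric, all of their eigenvalues---in particular the categorical dimensions $d(X)$, $d(Y)$---are totally real algebraic integers, and the formal codegrees of the ring must be cyclotomic integers dividing $\FPdim(\C)$ in the appropriate sense. I would show that for all but the single ring $X\otimes X\cong\be\oplus X\oplus Y$, $X\otimes Y\cong X\oplus Y$, $Y\otimes Y\cong\be\oplus X$ these divisibility and integrality conditions fail, so that the only surviving non-pointed, invertible-free ring is that of the even part of quantum $sl_2$ at a $7$th root of unity.

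Finally I would categorify the finitely many admissible rings and match them to (i)--(v). Ocneanu rigidity guarantees each ring has only finitely many categorifications, so it suffices to exhibit them: the $\BZ/3\BZ$ pointed categories give (i); the surviving self-dual ring is realized by the even part of quantum $sl_2$ and, by the dimension constraints, only by it, giving (ii); the three near-group rings are realized respectively by the Tambara--Yamagami (Ising) category for $n=0$ as in (iii), by $\Rep(S_3)$ and its twist for $n=1$ as in (iv), and by the $E_6$ subfactor category together with its Galois conjugate for $n=2$ as in (v). The heart of the argument, and the step I expect to be hardest, is the simultaneous elimination of the two infinite families---the near-group multiplicities $n\ge 3$ and the self-dual parameters with large $p,q,r,s$---which is exactly where the Appendix's near-group restrictions and the cyclotomic/formal-codegree arithmetic of the pivotal dimension function do the real work.
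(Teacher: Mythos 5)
Your overall architecture (classify the based rings, cut them down by arithmetic, then categorify the survivors) matches the paper, and your reduction of the non-self-dual case to $\BZ/3\BZ$ and your identification of the one-invertible case with the near-group rings $K(\BZ/2\BZ,n)$ are both correct, as is the final matching of the surviving rings to (i)--(v). But the two steps you yourself flag as the heart of the argument are exactly where the proposal has genuine gaps, because the tools you name are demonstrably insufficient.

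First, the near-group bound. The Appendix does not give $n\le 2$ for $G=\BZ/2\BZ$: its results are that a non-integral $\C(G,n)$ forces $|G|$ to divide $n$ (Theorem \ref{A1}) and an integral one forces $n<|G|$ (Lemma \ref{nginin}), which for $|G|=2$ leaves $n\in\{0,1\}$ together with \emph{every} even $n$. The elimination of $n=4,6,8,\dots$ is Theorem \ref{near}, one of the hardest results in the paper: it decomposes $I(X)$ and $I(Y)$ in $\Z(\C)$, combines the vanishing $\Tr(\theta_{I(X)})=0$ (Theorem \ref{zero}) with the Ng--Schauenburg indicator formula $\Tr(\theta_{I(X)}^2)=\pm\dim(\C)$ (Theorem \ref{FS}) to get the system \eqref{gam}--\eqref{gamfs}, and for the residual case $m=4$ invokes Loxton's theorem to show that $-2-2\sqrt{6}$ has an essentially unique representation as a sum of ten roots of unity, incompatible with closure under squaring. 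None of this is replaceable by ``restrictions from the Appendix.'' Second, in the self-dual case with no nontrivial invertibles, cyclotomicity and formal-codegree divisibility do \emph{not} kill everything except $K(1,1,1,0)$: Section 5 of the paper exhibits eighteen rings, such as $K(29,13,62,7)$, with $l\le k\le 100000$ that pass both the cyclotomic and d-number tests. What actually eliminates them is the pseudo-unitary inequality \eqref{psunit}, $\sum_E f_E^{-2}\le \tfrac12\bigl(1+\dim(\C)^{-1}\bigr)$, which again rests on $\Tr(\theta_{I(X)})=0$ and the decomposition of $I(\be)$ from Theorem \ref{I1}, applied after reducing to pseudo-unitarity via Lemma \ref{pivot}; the paper also needs the identity \eqref{magic} to handle the regime where that inequality is weak, and a separate ad hoc central-induction argument (Lemma \ref{2121}) for $K(2,1,2,1)$, a ring that passes every test you list. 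Your proposal supplies no substitute for either mechanism, so both infinite families remain uneliminated.
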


The proof of this result is significantly more difficult than the previous classification in rank 2 \cite{O2}. Thus we had to develop
some new theoretical tools. Most important are Theorem \ref{I1} which gives some information on the decomposition
of the induction of the unit object to the Drinfeld center of a fusion category and pseudo-unitary inequality
\eqref{psunit} which gives some non-trivial restrictions on the Grothendieck rings of fusion categories which admit
a spherical structure with positive values of dimensions. This inequality is strong enough to show that vast
majority of based rings of rank 3 are not categorifiable. It is also sometimes useful in some other cases,
see e.g. \cite{HL}.
However the computer experiments performed
by Nicolle Sandoval Gonzalez seem to indicate that this inequality is less efficient in higher ranks.

It is highly unsatisfactory that we had to assume the existence of pivotal structure for our results. 
Despite a significant effort we were not able to eliminate this assumption. However the existence
of fusion categories of rank 3  which do not admit such a structure seems to be very unlikely: 
we prove that 
such categories must have very big fusion coefficients, see Theorem \ref{bigfus}.

An interesting class of fusion categories is formed by {\em near-group categories} 
introduced in \cite{Sie}. By definition the near-group categories are ``small'' in the
following sense: the number of non-invertible simple objects in such categories is
precisely 1. 
The results of \cite{Iz} and \cite{EG} suggest that this class contains an infinite
family of fusion categories which seems to be unrelated to previously known constructions
using finite groups and quantum groups at roots of unity. In the Appendix \ref{A} written jointly
with Dmitry Nikshych we apply our
techniques to this case and obtain some new results on near-group categories. In particular
we show that  non-integral near-group categories must have commutative Grothendieck rings
and that integral near-group categories are weakly group theoretical in the sense of \cite{ENOw}.
We also note that Theorem \ref{near} gives a classification of near-group categories
with the group of invertible objects of order 2.

It is my great pleasure to express my deep gratitude to Dmitri Nikshych who made many 
useful suggestions for this paper. In particular he suggested to use results of Ng and 
Schauenburg in the proof of Theorem \ref{near} which allowed me to eliminate one 
particularly difficult case. Also Example \ref{xuex} is a result of our joint work. 
I am also happy to thank Nicolle Sandoval Gonzalez and Hannah Larson for many useful discussions.
Both authors of the Appendix are grateful to Masaki Izumi for sending his preprint \cite{Izz}.

\section{Preliminaries}
In this paper the base field $k$ is assumed to be algebraically closed of characteristic zero.

\subsection{Fusion categories and based rings} \label{prefus}
We recall from \cite{ENO} that a fusion category is a rigid tensor semisimple category over $k$
such that all Hom spaces are finite dimensional, number of isomorphism classed of simple objects is finite, and the unit object is simple. For a fusion category $\C$ we will denote by $\O(\C)$ the set
of isomorphism classes of simple objects in $\C$.

A {\em pivotal structure} on a fusion category $\C$ is an
isomorphism of tensor functors $\id \to \phantom{}^{**}$, see e.g. \cite[Definition 2.7]{ENO}.
In the presence of pivotal structure one defines traces of morphisms and dimensions $\dim(X)\in k$ of objects, see e.g. \cite[Section 2.2]{ENO}.
A pivotal structure (or the underlying fusion category) is {\em spherical} if $\dim(X)=\dim(X^*)$ for 
all objects $X\in \C$ (equivalently, all dimensions are totally real), see \cite[Section 2.2]{ENO}. 
For a fusion category $\C$ one defines its {\em global dimension} $\dim(\C)$, 
see e.g. \cite[Definition 2.2]{ENO}. For a spherical fusion category $\C$ the global dimension
is given by
$$\dim(\C)=\sum_{X\in \O(\C)}\dim(X)^2.$$
Assume that we fixed an embedding of the subfield of algebraic numbers in $k$ to $\BC$.
A {\em pseudo-unitary spherical structure} on a fusion category $\C$ is a spherical structure such 
that the images under this embedding of dimensions of all objects are positive real numbers,
see \cite[Section 8.4]{ENO}.
A fusion category is {\em pseudo-unitary} if it has a pseudo-unitary spherical structure.

The Grothendieck ring $K(\C)$ of a fusion category $\C$ is an example of {\em unital based ring} 
of finite rank (see e.g. \cite[Definition 2.1]{Od}). 
This means that $K(\C)$ is endowed with a basis $\{ b_i\}_{i\in I}$ 
over $\BZ$ (given by the classes of simple objects) such that the structure constants are nonnegative; moreover there exists an involution $i\mapsto i^*$ such that the expansion of 
$b_ib_{j}$ contains 1 (which is one of the basis elements) with coefficient $\delta_{ij^*}$.

For a based ring $K$ of finite rank its {\em categorification} is a fusion category $\C$ and an isomorphism of based rings $K\simeq K(\C)$; two categorifications are {\em equivalent} if there is a tensor equivalence $\C_1\to \C_2$ inducing the identity map on $K$. It is known that a given based ring of finite rank admits
only finitely many categorifications up to equivalence (this statement is known as ``Ocneanu rigidity'', see \cite[Theorem 2.28]{ENO}). We say that a based ring is {\em categorifiable} if it admits at least one categorification. The following statements are useful necessary conditions for a based ring $K$ to be categorifiable:

\begin{proposition}
{\em ({\bf Cyclotomic test}) (\cite[Corollary 8.53]{ENO})} Let $\C$ be a fusion category. Any irreducible representation of the ring $K(\C)$ is defined over some cyclotomic field. In particular for any homomorphism $\phi: K(\C)\to \BC$ we have $\phi([X])\in \BZ[\zeta]$ for some root of unity $\zeta$.\qed
\end{proposition}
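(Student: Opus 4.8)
The plan is to reduce the whole statement to the case of a \emph{modular} category, where character values can be read off from the $S$-matrix, and then to invoke the Galois symmetry of modular data. I will carry this out in full for the homomorphisms $\phi\colon K(\C)\to\BC$ (the assertion that is actually used below), and indicate how the general statement about irreducible representations follows along the same lines. Throughout write $K=K(\C)$, with basis $\{b_i\}$ and duality $i\mapsto i^*$.

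First I would record the elementary structure of $K\otimes_\BZ\BC$. Let $\tau$ be the functional extracting the coefficient of $\be$; it is a symmetric trace since $\tau(b_ib_j)=\delta_{ij^*}=\tau(b_jb_i)$. Extending $b_i\mapsto b_{i^*}$ antilinearly and using that $(X_i\ot X_j)^*\cong X_j^*\ot X_i^*$ (equivalently $N_{ij}^k=N_{j^*i^*}^{k^*}$) makes this an anti-involution, and $\langle x,y\rangle:=\tau(x\bar y)$ a positive-definite associative Hermitian form with $\langle b_i,b_j\rangle=\delta_{ij}$. Hence $K\otimes_\BZ\BC$ is a finite-dimensional $C^*$-algebra, so it is semisimple, a product of matrix algebras, and every irreducible representation (in particular every homomorphism $\phi$) is a $*$-representation. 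Consequently $\phi(b_{i^*})=\overline{\phi(b_i)}$, and the formal codegree $c_\phi:=\sum_{Y\in\O(\C)}\phi(b_Y)\phi(b_{Y^*})=\sum_{Y\in\O(\C)}|\phi(b_Y)|^2$ is a real number which is at least $1$, hence nonzero. Since the $\phi(b_i)$ are eigenvalues of the nonnegative integer fusion matrices they are algebraic integers, so the only content is that the field they generate is cyclotomic.

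The key reduction uses the Drinfeld center $\Z(\C)$, which is a modular category, together with the forgetful functor $F\colon\Z(\C)\to\C$ and its adjoint induction $I\colon\C\to\Z(\C)$; their composite satisfies $FI(X)\cong\bigoplus_{Y\in\O(\C)}Y\ot X\ot Y^*$, so in $K$ one has $[FI(X)]=\sum_{Y}b_Y\,[X]\,b_{Y^*}$. Applying $\phi$, and using that it factors through a commutative quotient, gives $\phi([FI(X)])=c_\phi\,\phi([X])$. On the other hand $[FI(X)]=F_*[I(X)]$ for the induced ring map $F_*\colon K(\Z(\C))\to K(\C)$, so both $\phi([FI(X)])=(\phi\circ F_*)([I(X)])$ and $c_\phi=(\phi\circ F_*)([I(\be)])$ are \emph{values of the single character} $\psi:=\phi\circ F_*$ of the Grothendieck ring of the modular category $\Z(\C)$. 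As $c_\phi\neq0$ we obtain $\phi([X])=\psi([I(X)])/\psi([I(\be)])$, so it suffices to prove that the values of $\psi$ are cyclotomic.

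This last point is exactly the Galois symmetry of modular data, and it is the main obstacle: everything preceding it is formal, whereas this is where the genuine work lies. For a modular category the Grothendieck ring is commutative and simultaneously diagonalized by $S$, its characters being $\psi_\ell([Z])=S_{Z\ell}/S_{0\ell}$ indexed by the simple objects $\ell$; one shows that $\mathrm{Gal}(\bar\BQ/\BQ)$ permutes these characters and acts on the normalized $S$-matrix through a finite abelian quotient controlled by the $T$-matrix, whence each ratio $S_{Z\ell}/S_{0\ell}$ lies in a cyclotomic field (cf.\ \cite{ENO}). Granting this, $\psi$ takes cyclotomic values, hence so does $\phi$, which proves the ``in particular'' statement. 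For an arbitrary irreducible representation $V$ I would argue in the same spirit: $V$ is a $*$-representation by the $C^*$-structure, so it is unitary and is determined by its character; the hardest part is again that the eigenvalues of the fusion matrices, and hence the values $\tr\rho_V(b_i)$, are cyclotomic, which one extracts from the modular data of $\Z(\C)$ as above (now keeping track of the noncommutative bookkeeping, since $F_*$ need only hit a commutative subring of $K$). Once the field of character values is known to be cyclotomic, unitarity forces $V$ to be realizable over it, giving the claim.
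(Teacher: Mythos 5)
The paper offers no proof of this Proposition: it is quoted from \cite[Corollary 8.53]{ENO} with no argument given, so there is nothing internal to compare against. Your reduction of the character statement to the modular case is correct and is in fact the route taken in \cite{ENO}: the identity $\phi([X])=\psi([I(X)])/\psi([I(\be)])$ with $\psi=\phi\circ F_*$ is a clean repackaging of the surjection of $K(\Z(\C))\ot k$ onto the center of $K(\C)\ot k$ (\cite[Lemma 8.49]{ENO}), and your positivity argument showing $c_\phi\ne 0$ is sound. Two points, however, keep this from being a proof. First, the entire arithmetic content --- that the characters $S_{Z\ell}/S_{0\ell}$ of a modular category take cyclotomic values --- is cited rather than proved, so you have deferred exactly the step for which the paper cites \cite{ENO} in the first place. (A smaller point: $\Z(\C)$ is modular only once a spherical structure is supplied; for a general fusion category one should first pass to the sphericalization $\tilde\C$, whose Grothendieck ring surjects onto $K(\C)$, as the paper does in the proof of Corollary \ref{fcdivis}.)

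Second, and more seriously, your final step for higher-dimensional irreducibles is wrong: knowing that the character of $V$ takes values in a cyclotomic field $L$, unitarity does \emph{not} force $V$ to be realizable over $L$. The obstruction is the Schur index of the corresponding simple factor of $K(\C)\ot L$, a central simple $L$-algebra which can be a division algebra even though it carries a positive involution. A counterexample to your implication inside this very class of rings: for $\C=\Vec_{Q_8}$ one has $K(\C)=\BZ[Q_8]$, whose $2$-dimensional irreducible representation has character field $\BQ$ but Schur index $2$ (the rational group algebra contains the rational quaternions as a simple factor), hence is not defined over $\BQ$ or even over $\BR$, despite being unitary. The Proposition survives because it only asks for \emph{some} cyclotomic field: one must add the missing argument that a central simple algebra over a cyclotomic number field is split by a larger cyclotomic field (cyclotomic extensions realize arbitrarily divisible local degrees at the finite places, while the local index at a real place is at most $2$ and is killed by adjoining $i$), or invoke a Brauer-type splitting theorem. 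Also, extracting the full character values $\Tr(b_i,E)$, rather than just the central character, from the center of $K(\C)\ot k$ requires an extra step (e.g.\ writing the central idempotent $e_E=\alpha_E/f_E$ over $L$ and pairing with the $\BQ$-rational regular trace); this is more than the ``noncommutative bookkeeping'' you wave at.
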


We recall (see \cite{Od}) that an algebraic integer $\alpha$ is a {\em d-number} if 
$\frac{\sigma(\alpha)}{\alpha}$ is algebraic integer for any
automorphism $\sigma$ of $\bar \BQ$. Equivalently, if $x^n+a_1x^{n-1}+\cdots +a_n\in \BZ[x]$ is
the minimal polynomial of $\alpha$ then $\frac{a_i^n}{a_n^i}\in \BZ$ for $i=1, \ldots , n-1$.

\begin{proposition} \label{dnumt}
{\em ({\bf d-number test}) (\cite[Theorem 1.2]{Od})} Let $\C$ be a fusion category.
Let $R: K(\C)\to K(\C)$ be the operator of left multiplication by the element $\sum_ib_ib_{i^*}$.
Then the eigenvalues of $R$ are $d-$numbers. \qed
\end{proposition}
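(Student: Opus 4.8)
The plan is to realise $c=\sum_i b_ib_{i^*}$ as the Casimir element of a symmetric Frobenius algebra, prove it is central, and then study the scalars by which it acts on the simple blocks of $K(\C)\otimes\bar\BQ$. Let $\tau\colon K(\C)\to\BZ$ send $x$ to the coefficient of the unit $b_0=1$. The based-ring axioms give $\tau(b_ib_j)=\delta_{ij^*}=\tau(b_jb_i)$, so $\tau$ is a nondegenerate \emph{symmetric} trace form and $\{b_{i^*}\}$ is the basis dual to $\{b_i\}$ with respect to it. Hence $c=\sum_i b_ib_{i^*}$ is exactly the Casimir element of this symmetric Frobenius structure, and by the standard computation it is central: $b_jc=cb_j$ for all $j$. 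Equivalently, Frobenius reciprocity $N_{ij}^k=N_{i^*k}^j$ gives $N_{i^*}=N_i^{\,T}$ for the fusion matrices, so that $R=\sum_i N_iN_i^{\,T}$ is symmetric and commutes with every $N_j$.

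Because $\tau$ is nondegenerate, $A:=K(\C)\otimes_\BZ\bar\BQ$ is semisimple, say $A\simeq\prod_\alpha \mathrm{Mat}_{n_\alpha}(\bar\BQ)$. Since $c$ is central it acts on the simple module $V_\alpha$ by a scalar $\lambda_\alpha$, and these $\lambda_\alpha$ (with multiplicities $n_\alpha^2$) are precisely the eigenvalues of $R$. As $R\in\mathrm{Mat}_N(\BZ)$, its characteristic polynomial lies in $\BZ[x]$, so each $\lambda_\alpha$ is an algebraic integer and the multiset $\{\lambda_\alpha\}$ is stable under $\mathrm{Gal}(\bar\BQ/\BQ)$, with $\sigma$ permuting the blocks so that $\sigma(\lambda_\alpha)=\lambda_{\sigma\alpha}$. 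What remains is to show that $\sigma(\lambda_\alpha)/\lambda_\alpha$ is an algebraic integer for all $\sigma$. Since applying the hypothesis to $\sigma^{-1}$ shows this ratio is always a unit, the claim is equivalent to the statement that for a splitting field $L$ and every prime $\mathfrak{q}$ of $\mathcal{O}_L$ the valuation $v_{\mathfrak{q}}(\lambda_\alpha)$ is constant as $\alpha$ ranges over one Galois orbit of blocks.

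The main tool is the explicit primitive central idempotent. Applying the expansion $z=\sum_i\tau(zb_{i^*})b_i$ to $z=e_\alpha$ gives $e_\alpha=\lambda_\alpha^{-1}n_\alpha\sum_i\chi_\alpha(b_{i^*})b_i$, where $\chi_\alpha$ is the character of $V_\alpha$. Each $\chi_\alpha(b_{i^*})$ is a sum of eigenvalues of the integer matrix $N_{i^*}$, hence an algebraic integer, so $z_\alpha:=\lambda_\alpha e_\alpha=n_\alpha\sum_i\chi_\alpha(b_{i^*})b_i$ lies in $\mathcal{O}_L\otimes_\BZ K(\C)$, is integral over $\BZ$, and satisfies $z_\alpha^2=\lambda_\alpha z_\alpha$. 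The plan is to combine these integral idempotent multiples with the \emph{unimodularity} of $\tau$ (its Gram matrix $\delta_{ij^*}$ in the basis $\{b_i\}$ is a permutation matrix, of determinant $\pm1$): after localising the order $\mathcal{O}_L\otimes K(\C)$ at $\mathfrak{q}$, comparing the form $\tau=\sum_\alpha\lambda_\alpha^{-1}n_\alpha\,\Tr_\alpha$ with its integral refinement should force the valuations $v_{\mathfrak{q}}(\lambda_\alpha)$ to coincide along each Galois orbit. I expect this final step — deducing equality of $\mathfrak{q}$-adic valuations across a Galois orbit from integrality of the $z_\alpha$ and unimodularity of $\tau$ — to be the main obstacle, because $\sigma$ moves $\mathfrak{q}$ and several orbits of blocks may fuse into one $\mathfrak{q}$-block; controlling this fusion is exactly where the full based-ring structure must enter, as mere symmetry and nonnegativity of $R$ do not suffice (e.g. $\bigl(\begin{smallmatrix}4&1\\1&2\end{smallmatrix}\bigr)$ is symmetric with nonnegative integer entries but has the non-$d$-number eigenvalue $3+\sqrt2$, and is not of the form $\sum_i N_iN_i^{\,T}$). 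As a consistency check, for a modular category the Verlinde formula yields $\lambda_\alpha=\dim(\C)/d_\alpha^2$, which exhibits the required $\mathfrak{q}$-adic cancellation and reduces the assertion to $d$-number properties of the global dimension.
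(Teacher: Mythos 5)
Your setup is sound as far as it goes: $\sum_i b_ib_{i^*}$ is indeed the Casimir of the symmetric Frobenius form $\tau$, it is central, its eigenvalue $\lambda_\alpha$ on each simple block occurs with multiplicity $n_\alpha^2$, and $\lambda_\alpha e_\alpha$ is integral. But the proof is not complete --- you yourself flag the decisive step (constancy of the $\mathfrak{q}$-adic valuations of the $\lambda_\alpha$ along a Galois orbit) as an unresolved ``obstacle'' --- and, more seriously, that step cannot be carried out by the means you propose. Every ingredient you invoke (the unimodular symmetric trace form, centrality of the Casimir, integrality of the character values $\chi_\alpha(b_i)$, nonnegativity of the structure constants, the involution $i\mapsto i^*$) is available for an \emph{arbitrary} unital based ring of finite rank, and for arbitrary based rings the conclusion is false. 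That is the entire point of the proposition: it is a categorifiability obstruction, used in this paper precisely to rule out based rings. Concretely, $K(1,2,0,4)$ is a perfectly good based ring whose operator $R$ has characteristic polynomial $t^3-38t^2+148t-148$; since $38^3/148\notin\BZ$, its eigenvalues are not d-numbers. The subcase $kl-mn=0$ in the proof of Proposition \ref{small3} supplies an infinite family of further examples. Hence no refinement of the argument with $\tau$ and the integral elements $z_\alpha$ can close the gap; genuinely categorical input about $\C$ (not just about $K(\C)$) is indispensable.

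For comparison, the paper does not prove this proposition at all: it is quoted from \cite[Theorem 1.2]{Od}, which states that the formal codegrees $f_E$ of a fusion category are d-numbers, combined with Remark \ref{trRinv1} (the eigenvalues of $R$ are $\dim(E)f_E$ for $E\in\Irr(K(\C))$) and \cite[Corollary 2.8]{Od} to pass between $f_E$ and $\dim(E)f_E$. The categorical content behind \cite[Theorem 1.2]{Od} enters through the Drinfeld center, in the spirit of Theorem \ref{I1} of this paper: one realizes $f_E=\pm\dim(\C)/\dim(A_E)$ for a simple object $A_E\in\Z(\C)$, and it is this realization --- not the based-ring combinatorics --- that forces the d-number property. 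If you want a self-contained argument, that is the route to follow.
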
 

\begin{remark} Proposition \ref{dnumt} is equivalent to \cite[Theorem 1.2]{Od} in view of 
Remark \ref{trRinv1} and \cite[Corollary 2.8]{Od}.
\end{remark}

\subsection{The induction functor $I : \C \to \Z(\C)$}\label{precenter}
 Let $\C$ be a spherical fusion category. It is known that its {\em Drinfeld center} $\Z(\C)$ is a {\em modular tensor category}, see \cite[Theorem 1.2]{Mu}, \cite[Theorem 2.15]{ENO}.
Since $\Z(\C)$ is semisimple the forgetful functor $F: \Z(\C)\to \C$ has a right adjoint $I: \C \to \Z(\C)$. We will use the following properties of this functor:

\begin{proposition} \label{FI}
{\em (\cite[Proposition 5.4]{ENO})} $F(I(Y))=\bigoplus_{X\in \O(\C)}X\ot Y\ot X^*$.
\end{proposition}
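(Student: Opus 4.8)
The plan is to identify the object $I(Y)$ inside $\Z(\C)$ by means of its defining universal property as the right adjoint of $F$, namely the natural isomorphism $\Hom_\C(F(W),Y)\cong\Hom_{\Z(\C)}(W,I(Y))$, and then simply forget the half-braiding. Concretely, I would set
$$P(Y) := \bigoplus_{X \in \O(\C)} X \ot Y \ot X^* \in \C,$$
equip it with a canonical half-braiding $\gamma = \{\gamma_W\}_{W \in \C}$ so as to obtain an object $\widehat P(Y) \in \Z(\C)$ with $F(\widehat P(Y)) = P(Y)$, and prove that $\widehat P(Y) \cong I(Y)$. Since $F$ forgets the half-braiding, this yields $F(I(Y)) = P(Y)$, which is the claim. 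Here semisimplicity is used precisely to write $P(Y)$ as a finite direct sum over $\O(\C)$: intrinsically $P(Y)$ is the coend $\int^{X \in \C} X \ot Y \ot X^*$, and it is this coend structure that produces the half-braiding.

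First I would construct the half-braiding. For a simple object $W$, the component of $\gamma_W \colon P(Y) \ot W \to W \ot P(Y)$ on the summand $X \ot Y \ot X^*$ is built from the evaluation and coevaluation maps of $W$ together with the associativity constraints, which is exactly the dinatural structure of the coend; the hexagon axiom for $\gamma$ then follows from the triangle identities for duals. This is the step where essentially all of the work lives, and I expect it to be the main obstacle: one must check that $\gamma_W$ is well defined on the direct sum (independent of the chosen representatives of $\O(\C)$), that it is natural in $W$, and that it satisfies the hexagon, all of which amount to careful bookkeeping with associators, evaluations, and coevaluations.

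Next I would verify the adjunction $\Hom_{\Z(\C)}(W, \widehat P(Y)) \cong \Hom_\C(F(W), Y)$, natural in $W = (Z, \beta) \in \Z(\C)$ and in $Y$. The counit $\eps \colon F(\widehat P(Y)) = P(Y) \to Y$ is the projection onto the summand indexed by the unit object $X = \be$, using $\be \ot Y \ot \be^* \cong Y$. In the other direction, given $f \colon F(W) = Z \to Y$, I would define $\tilde f \colon Z \to P(Y)$ by letting its $X$-component be the composite
$$Z \xrightarrow{\coev_X \ot \id} X \ot X^* \ot Z \xrightarrow{\id \ot \beta_{X^*}^{-1}} X \ot Z \ot X^* \xrightarrow{\id \ot f \ot \id} X \ot Y \ot X^*,$$
using the half-braiding $\beta$ of $W$ to move $Z$ into the middle. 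The crux is to check that $\tilde f$ is a morphism in $\Z(\C)$, i.e.\ that it intertwines $\beta$ with $\gamma$; this is where the hexagon for $\beta$ and the coend definition of $\gamma$ must be matched up.

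Finally I would check that $f \mapsto \tilde f$ and $g \mapsto \eps \circ F(g)$ are mutually inverse and natural, which identifies $\widehat P(Y)$ with $I(Y)$ and completes the proof. An alternative, more structural route would be to use the identification $\Z(\C) \simeq \Fun_{\C\text{-}\C}(\C,\C)$ of the center with $\C$-bimodule endofunctors, under which $F$ becomes evaluation at $\be$ and $I$ is computed by a standard adjunction of module functors; this avoids writing the half-braiding by hand but hides the same computation inside the bimodule formalism.
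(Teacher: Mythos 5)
The paper offers no proof of this statement: it is quoted directly from \cite[Proposition 5.4]{ENO}, so there is nothing internal to compare against. Your argument is nonetheless a correct and complete-in-outline proof of the cited result, by the standard direct route: realize $\bigoplus_{X\in \O(\C)}X\ot Y\ot X^*$ as the coend $\int^{X}X\ot Y\ot X^*$ (semisimplicity and finiteness of $\O(\C)$ are exactly what let you write the coend as this finite sum), equip it with the canonical half-braiding $\gamma$ coming from dinaturality and the duality maps, and check the adjunction $\Hom_{\Z(\C)}(W,\widehat{P}(Y))\cong \Hom_\C(F(W),Y)$ with counit the projection onto the $\be$-summand and with $\tilde f$ built from $\coev_X$ and $\beta_{X^*}^{-1}$ as you write. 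The checks you defer (well-definedness of $\gamma_W$ on the sum over $\O(\C)$, naturality, the hexagon, and centrality of $\tilde f$) are genuinely the whole content, but they are routine and all go through; note also that even the direction of the adjunction is right (you construct the \emph{right} adjoint, with counit $F(I(Y))\to Y$, which is what the paper uses). The proof in \cite{ENO} takes in effect the more structural route you mention only as an alternative, identifying $\Z(\C)$ with the dual category $(\C\boxtimes \C^{\op})^*_{\C}$ and computing $I$ via internal Hom, so the formula drops out of the general machinery of module categories; your hands-on version costs more bookkeeping with associators but produces the explicit half-braiding and counit on $I(\be)$, which is what is actually exploited later in the paper (e.g.\ in Theorem \ref{zero} and Proposition \ref{I1eigen}).
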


In particular we see that for any $Y\in \C$
\begin{equation} \label{dimI}
\dim(I(Y))=\dim(Y)\dim(\C).
\end{equation}

Recall that any modular tensor category is equipped with balance isomorphisms 
$\theta_X: X\to X$, see \cite[Section 2.2]{BK}.

\begin{theorem} \label{zero} {\em (cf. \cite[Remark 4.6]{NS})}
We have $\theta_{I(\be)}=\id_{I(\be)}$ and $\Tr(\theta_{I(\be)})=\dim(\C)$.
For any $\be \ne X\in \O(\C)$ we have $\Tr(\theta_{I(X)})=0$.
\end{theorem}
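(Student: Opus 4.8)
The plan is to treat the three assertions in sequence, deriving (ii) from (i) and isolating the real work in (iii).

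For (i) and (ii), I would identify $A:=I(\be)$ with the canonical algebra of $\C$. Adjunction gives $\dim\Hom_{\Z(\C)}(\be,I(\be))=\dim\Hom_\C(\be,\be)=1$, so $A$ is connected; since $F$ is strong monoidal and $I$ is its right adjoint, $A$ carries a natural commutative algebra structure, and semisimplicity makes it separable, i.e. $A$ is a connected \'etale algebra. Because the balance $\theta$ is a monoidal natural automorphism of the identity, $\theta_A$ is an algebra automorphism of $A$, and for a connected \'etale algebra this forces $\theta_A=\id_A$. (Concretely, writing $A=\int^{Z}Z\ot Z^*$ one checks directly that the ribbon twist acts on each summand $Z\ot Z^*$ through the canonical self-pairing and collapses to the identity.) This is (i). Assertion (ii) is then immediate: $\Tr(\theta_{I(\be)})=\Tr(\id_{I(\be)})=\dim(I(\be))=\dim(\be)\dim(\C)=\dim(\C)$, using \eqref{dimI}.

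For (iii) the first step is to move the computation from $\Z(\C)$ to $\C$. The forgetful functor $F$ is pivotal and preserves quantum traces, so $\Tr(\theta_{I(X)})=\Tr_\C\!\big(F(\theta_{I(X)})\big)$, where by Proposition \ref{FI} the morphism $F(\theta_{I(X)})$ is an endomorphism of $FI(X)=\bigoplus_{Z\in\O(\C)}Z\ot X\ot Z^*$. Realizing $I(X)=\int^{Z}Z\ot X\ot Z^*$ as a coend equipped with its canonical half-braiding, I would use the standard expression for the ribbon twist of a central object in terms of its half-braiding and the pivotal structure to write $F(\theta_{I(X)})$ explicitly on the coend.

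The last step is the evaluation. Only the diagonal blocks $Z\ot X\ot Z^*\to Z\ot X\ot Z^*$ contribute to $\Tr_\C$, and each such block-trace is a closed string diagram; applying the bubble- and fusion-relations of the pivotal category collapses the total to $\dim(\C)\cdot\dim\Hom_\C(\be,X)$. For a simple $X\not\cong\be$ one has $\dim\Hom_\C(\be,X)=0$, which gives (iii) (and for $X=\be$ it recovers (ii) as a consistency check). Equivalently, this is the Ng--Schauenburg identity $\Tr(\theta_{I(X)})=\dim(\C)\,\nu_1(X)$ expressing the trace through the first Frobenius--Schur indicator $\nu_1(X)=\dim\Hom_\C(\be,X)$. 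The main obstacle is the middle step: fixing the precise sign and orientation conventions in the coend twist formula and carrying out the diagonal-block diagram evaluation without error, since the pivotal and coherence bookkeeping is where mistakes are easy to make.
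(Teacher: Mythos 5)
Your argument for $\theta_{I(\be)}=\id_{I(\be)}$ has a genuine gap: it is not true that an algebra automorphism of a connected \'etale algebra must be the identity, nor that connectedness plus \'etaleness forces $\theta_A=\id_A$. For the first claim, the regular algebra $\Fun(G)\in \Rep(G)$ is connected \'etale and its algebra automorphism group is all of $G$; for the second, Remark \ref{funMN} of the paper itself exhibits a connected \'etale (indeed Lagrangian) algebra $M$ in $\Z(\Vec_{\BZ/2\BZ})$, taken with the non-standard spherical structure, for which $\theta_M\ne\id_M$. So $\theta_{I(\be)}=\id_{I(\be)}$ is a specific property of $I(\be)$ with respect to the canonical ribbon structure on $\Z(\C)$ induced by the chosen spherical structure on $\C$, and it must be established either by the direct computation on the coend $\bigoplus_Z Z\ot Z^*$ that you only gesture at in parentheses, or by citation --- the paper simply quotes \cite[Corollary 2.7]{Oex}. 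Once (i) is in place, your derivation of (ii) from \eqref{dimI} coincides with the paper's.

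For (iii) you take a genuinely different route. You reduce $\Tr(\theta_{I(X)})$ to a string-diagram evaluation on the coend and identify the answer with the Ng--Schauenburg identity $\Tr(\theta_{I(X)})=\dim(\C)\,\nu_1(X)=\dim(\C)\dim\Hom_\C(\be,X)$, which vanishes for simple $X\ne\be$; this is precisely the $n=1$ case of \cite[Theorem 4.1]{NS}, and citing it would close the argument, since as written the ``bubble- and fusion-relation'' evaluation --- which you yourself flag as the main obstacle --- is asserted rather than performed. The paper instead argues through the Kirillov--Ostrik module machinery: for an $I(\be)$-module $M$ one has $\Tr(\psi_M)=\dim(I(\be))\Tr(\theta_M)$, while $\psi_M=0$ for simple non-dyslectic $M$ by \cite[Lemma 4.3]{KiO}; since the simple $I(\be)$-modules are exactly the $I(X)$ and only $I(\be)$ is dyslectic, $\Tr(\theta_{I(X)})=0$ for $X\ne\be$. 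The indicator route is shorter if one is willing to cite \cite{NS} (which is why the theorem is labelled ``cf.\ \cite[Remark 4.6]{NS}''), whereas the paper's route stays within the algebra-in-category formalism it reuses elsewhere (e.g.\ in Proposition \ref{I1eigen}) and avoids the coend and pivotal bookkeeping entirely.
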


\begin{proof} The equality $\theta_{I(\be)}=\id_{I(\be)}$ follows from \cite[Corollary 2.7]{Oex}; then
$\Tr(\theta_{I(\be)})=\dim(I(\be))=\dim(\C)$ follows from \eqref{dimI}. 

Recall (see \cite[Lemma 3.5]{DMNO}) that $I(\be)$ is an {\em \'etale algebra} in $\Z(\C)$. Let
$\varepsilon : I(\be)\to \be$ be a unique map which composes to identity with the unit morphism.
Then the morphism $e: I(\be)\ot I(\be)\to I(\be)\xrightarrow{\varepsilon} \be$ is {\em non-degenerate}, see
\cite[Remark 3.4]{DMNO}. Thus there is a unique morphism $i: \be \to I(\be)\ot I(\be)$ such that
the pair $(e,i)$ satisfies the rigidity axiom, see \cite[Definition 1.11]{KiO}. For any $I(\be)-$module $M$
with the action morphism $a: I(\be)\ot M\to M$ consider the following composition (cf. \cite[Fig. 16]{KiO}):
$$\psi_M:M\xrightarrow{i\ot \id} I(\be)\ot I(\be)\ot M\xrightarrow{\id \ot a}I(\be)\ot 
M\xrightarrow{\id \ot \theta}I(\be)\ot M\xrightarrow{a} M.$$
Using \cite[Lemma 1.14]{KiO} one shows that $\Tr(\psi_M)=\dim(I(\be))\Tr(\theta_M)$.
On the other hand it is proved
in \cite[Lemma 4.3]{KiO} that for a simple $M$ we have $\psi_M=0$ unless $M$ is {\em dyslectic}. 

The result follows since the simple $I(\be)-$modules are precisely $I(X), X\in \O(\C)$
(see \cite[Lemma 3.5]{DMNO}), and  the only simple {\em dyslectic}
module over $I(\be)$ is $I(\be)$ itself, see \cite[Proposition 4.4]{DMNO}.
\end{proof}

\begin{example} Recall that for a modular category $\D$ its {\em Gauss sum} is
$$G(\D)=\sum_{A\in \O(\D)}\theta_A\dim(A)^2.$$
We can compute the Gauss sum of the category $\Z(\C)$ using Theorem \ref{zero}:
$$\sum_{A\in \O(\Z(\C))}\theta_A\dim(A)^2=\sum_{A\in \O(\Z(\C)), X\in \O(\C)}\theta_A\dim(A)
[F(A):X]\dim(X)=$$
$$\sum_{A\in \O(\Z(\C)), X\in \O(\C)}\theta_A\dim(A)
[I(X):A]\dim(X)=\sum_{X\in \O(\C)}\Tr(\theta_{I(X)})\dim(X)=\dim(\C)$$
Thus we recover the result of M\"uger \cite[Theorem 1.2]{Mu} stating that for any spherical $\C$ 
$$G(\Z(\C))=\dim(\C).$$
\end{example}

Ng and Schauenburg connected $\Tr(\theta_{I(X)}^i), i\in \BZ$ with {\em higher Frobenius-Schur 
indicators}, see \cite{NS}. In particular, $\Tr(\theta_{I(X)}^2)$ is related with the classical
Frobenius-Schur indicator. Thus we have

\begin{theorem} \label{FS}
{\em (\cite[Theorem 4.1]{NS})}
Assume that $X\in \O(\C)$ is self-dual. Then
$$\Tr(\theta_{I(X)}^2)=\pm \dim(\C). \;\;\; \square$$
\end{theorem}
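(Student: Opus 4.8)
The plan is to identify $\frac{1}{\dim(\C)}\Tr(\theta_{I(X)}^2)$ with the classical (second) Frobenius--Schur indicator $\nu_2(X)$, and then to invoke the elementary fact that this indicator equals $\pm 1$ for a simple self-dual object. In other words, the entire content is the equality $\Tr(\theta_{I(X)}^2)=\nu_2(X)\dim(\C)$, after which the sign $\pm$ comes for free. This is exactly the point of view of Ng--Schauenburg.

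First I would make the categorical trace explicit. Writing $I(X)=\bigoplus_{A\in\O(\Z(\C))}[I(X):A]\,A$ and using that $\theta$ acts on the $A$-isotypic component by the scalar $\theta_A$, we obtain
$$\Tr(\theta_{I(X)}^2)=\sum_{A\in\O(\Z(\C))}[I(X):A]\,\theta_A^2\,\dim(A),$$
which is the same type of sum that appeared in the Gauss sum computation above. The aim is to evaluate this sum diagrammatically in $\C$ rather than in $\Z(\C)$.

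Next I would transport the computation to $\C$ via the adjunction $F\dashv I$. By Proposition \ref{FI} we have $F(I(X))=\bigoplus_{Z\in\O(\C)}Z\ot X\ot Z^*$, and the half-braiding on $I(X)$ together with the balance $\theta_{I(X)}$ admits an explicit graphical description as the morphism of $\C$ obtained by wrapping the strands $Z$ once around $X$; the square $\theta_{I(X)}^2$ then corresponds to a double wrap. The key step is to recognize that, after closing up the diagram to form the trace and dividing by the factor $\dim(\C)$ produced by the adjunction, this double wrap computes precisely the trace of the cyclic rotation operator $E\colon\Hom(\be,X\ot X)\to\Hom(\be,X\ot X)$ built from the pivotal structure, so that $\frac{1}{\dim(\C)}\Tr(\theta_{I(X)}^2)=\Tr(E)=\nu_2(X)$.

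Finally, since $X$ is simple and self-dual, $\dim\Hom(\be,X\ot X)=\dim\Hom(X^*,X)=\dim\Hom(X,X)=1$. The operator $E$ satisfies $E^2=\id$ (rotating a two-strand diagram twice returns it to the start, the double-dual identification being trivial by pivotality), so on this one-dimensional space $E$ is multiplication by $\pm 1$ and $\Tr(E)=\pm 1$. Combining with the previous step gives $\Tr(\theta_{I(X)}^2)=\pm\dim(\C)$. I expect the main obstacle to be the third step: the precise diagrammatic identification of the squared central twist with the double rotation on $\Hom(\be,X\ot X)$, keeping careful track of the half-braiding and of the normalization $\dim(\C)$ coming from $F\dashv I$. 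The reduction in the first two steps and the sign computation in the last step are routine once this identification is secured.
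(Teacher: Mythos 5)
Your proposal follows exactly the route the paper takes: the paper gives no proof of this statement but cites it directly from Ng--Schauenburg, whose Theorem 4.1 is precisely your identification $\frac{1}{\dim(\C)}\Tr(\theta_{I(X)}^2)=\nu_2(X)$, after which $\nu_2(X)=\pm1$ for a self-dual simple object (one-dimensional $\Hom(\be,X\ot X)$ and $E^2=\id$ by pivotality) is the elementary endgame you describe. The step you flag as the main obstacle --- the diagrammatic identification of the squared central twist with the rotation operator on $\Hom(\be,X\ot X)$, with the normalization $\dim(\C)$ coming from the adjunction $F\dashv I$ --- is exactly the content of the cited result, so your outline does not diverge from the paper.
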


Let $c_{A,B}: A\ot B\to B\ot A$ denote the braiding on the category $\Z(\C)$.
For $A,B\in \O(\Z(\C))$ let $s_{A,B}=\Tr(c_{B,A}\circ c_{A,B})$. We can consider
$s_{A,B}$ as a matrix of a linear operator on $K(\Z(\C))\otimes k$ with respect to
the basis consisting of classes $[A], A\in \O(\C)$. The following result was conjectured
by A.~Kitaev.

\begin{proposition} \label{I1eigen} {\em (cf. \cite[Theorem 4.1]{KiO})}
 The class $[I(\be)]\in K(\Z(\C)]$ is an eigenvector for $s=s_{A,B}$ with
eigenvalue $\dim(\C)$.
\end{proposition}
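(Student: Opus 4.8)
The plan is to prove the eigenvector equation componentwise. Writing $[I(\be)]=\sum_{A\in\O(\Z(\C))}[I(\be):A]\,[A]$, it suffices to show that for every simple $B\in\O(\Z(\C))$ one has
$$\sum_{A}s_{B,A}\,[I(\be):A]=\dim(\C)\,[I(\be):B].$$
I would expand each $s_{B,A}$ through the standard expression of the $S$-matrix of a modular category in terms of twists and fusion coefficients, namely $s_{B,A}=\theta_B^{-1}\theta_A^{-1}\sum_{C}N_{BA}^{C}\theta_C\dim(C)$, which follows from the fact that the double braiding $c_{A,B}c_{B,A}$ acts on the $C$-isotypic part of $B\ot A$ by the scalar $\theta_C/(\theta_B\theta_A)$.

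The key input is Theorem \ref{zero}. Since $\theta$ acts on each simple object as a scalar, the identity $\theta_{I(\be)}=\id_{I(\be)}$ forces $\theta_A=1$ for every simple $A$ in the support of $I(\be)$. Substituting this into the formula above kills the $\theta_A^{-1}$ denominators for exactly the $A$ that contribute, and after interchanging the sums over $A$ and $C$ the fusion coefficients collapse via $\sum_A[I(\be):A]N_{BA}^{C}=[B\ot I(\be):C]$. Hence
$$\sum_{A}s_{B,A}\,[I(\be):A]=\theta_B^{-1}\sum_{C}[B\ot I(\be):C]\,\theta_C\dim(C)=\theta_B^{-1}\,\Tr(\theta_{B\ot I(\be)}),$$
where I recognize the right-hand sum as the trace of the balancing on $B\ot I(\be)$.

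To evaluate this trace I would pass to the Grothendieck ring. Using the adjunction together with rigidity in $\Z(\C)$, sphericity, and the fact that $F$ is a tensor functor, one checks $\Hom_{\Z(\C)}(C,B\ot I(\be))\cong\Hom_\C(F(C),F(B))$, so that $[B\ot I(\be)]=[I(F(B))]$ in $K(\Z(\C))$. Since $Y\mapsto\Tr(\theta_{Y})$ is a linear functional on $K(\Z(\C))$ and $I$ is additive, Theorem \ref{zero} gives
$$\Tr(\theta_{B\ot I(\be)})=\sum_{X\in\O(\C)}[F(B):X]\,\Tr(\theta_{I(X)})=[F(B):\be]\,\dim(\C).$$
Finally I would reconcile the stray factor $\theta_B^{-1}$: by adjunction $[F(B):\be]=[I(\be):B]$, so when this multiplicity is nonzero $B$ lies in the support of $I(\be)$ and $\theta_B=1$, whereas when it vanishes both sides are $0$; either way $\theta_B^{-1}[F(B):\be]\dim(\C)=\dim(\C)[I(\be):B]$, which is the desired identity.

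I expect the main obstacle to be precisely this last reconciliation, i.e. controlling the residual $\theta_B^{-1}$. The computation only simplifies because the twist is trivial exactly on the support of $I(\be)$, which is also exactly where the answer is nonzero, so the argument must be arranged to apply Theorem \ref{zero} both to the summation variable $A$ and to the target index $B$. A tempting shortcut — combining $s^{2}=\dim(\Z(\C))$ times charge conjugation, hence $\dim(\C)^{2}$ times the identity on the self-dual class $[I(\be)]$, to deduce $s^{2}[I(\be)]=\dim(\C)^{2}[I(\be)]$ — only pins down the eigenvalue up to sign, and matching a single component does not obviously exclude a $-\dim(\C)$ contribution. This is why I would favor the direct computation above, which produces the correct sign automatically.
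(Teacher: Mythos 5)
Your proof is correct and follows essentially the same route as the paper: both reduce the action of $s$ on $[I(\be)]$ to $\theta_B^{-1}\Tr(\theta_{B\ot I(\be)})$ via the balancing relation, evaluate that trace as $[I(\be):B]\dim(\C)$ using Theorem \ref{zero}, and absorb the residual twist by noting that $\theta_B=1$ whenever $[I(\be):B]\ne 0$. The only cosmetic difference is that you identify $[B\ot I(\be)]=[I(F(B))]$ in $K(\Z(\C))$ by adjunction, whereas the paper decomposes $B\ot I(\be)$ as an $I(\be)$-module into copies of the $I(X)$; both yield the same multiplicity count.
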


\begin{proof}
Recall that $c_{I(\be),A}\circ c_{A,I(\be)}=\theta_{A\ot I(\be)} (\theta_A \theta_{I(\be)})^{-1}$, see
e.g. \cite[(2.2.8)]{BK}. If $A$ is a simple object then $\theta_A$ is a scalar; 
also $\theta_{I(\be)}=\id_{I(\be)}$
by Theorem \ref{zero}. Thus 
$$\Tr(c_{I(\be),A}\circ c_{A,I(\be)})=\theta_A^{-1}\Tr(\theta_{A\ot I(\be)}).$$
Observe that $A\ot I(\be)$ is $I(\be)-$module, so it is a direct sum of modules $I(X), X\in \O(\C)$;
in view of Theorem \ref{zero} the trace above depends only on the multiplicity of $I(\be)$ in
$A\ot I(\be)$. This multiplicity is 
$$\dim_k\Hom_{I(\be)}(A\ot I(\be),I(\be))=\dim_k\Hom(A,I(\be))=[I(\be):A].$$ 
Notice that $[I(\be):A]\ne 0$ implies $\theta_A=1$ by Theorem \ref{zero}. Thus 
$$s\cdot [I(\be)]=\sum_{A\in \O(\Z(\C))}\Tr(c_{I(\be),A}\circ c_{A,I(\be)})[A]=$$
$$\sum_{A\in \O(\Z(\C))}[I(\be):A]\dim(\C)[A]=\dim(\C)[I(\be)].$$
\end{proof} 

\subsection{Formal codegrees and decomposition of $I(\be)$}
Let $K$ be a based ring of finite rank and let $E$ be an irreducible representation of $K$ 
over the field $k$. Then the element $\alpha_E=\sum_{i\in I}\Tr(b_i,E)b_{i^*}\in K\ot k$ is in the
center of $K\ot k$; it acts by zero on any irreducible representation of $K$ not isomorphic to $E$
and by a nonzero scalar $f_E\in k$ on $E$, see \cite[Lemma 2.3]{Od}. Let $\Irr(K)$ be the set of isomorphism classes
of irreducible representations of $K$. The scalars $f_E, E\in \Irr(K)$ are called {\em formal
codegrees} of the based ring $K$ or of the fusion category $\C$ with $K=K(\C)$.

\begin{example} \label{moduex}
Assume that $\D$ is a modular tensor category. It is well known that
for any $A\in \O(\D)$ the assignment $B\mapsto \frac{s_{A,B}}{\dim(A)}$ is a 1-dimensional
representation $\hat E_A$ of $K(\D)$; moreover the map $A\mapsto \hat E_A$ is a bijection
$\O(\D)\to \Irr(K(\D))$, see e.g. \cite[Theorem 3.1.12]{BK}. We have (see \cite[Section 3.3]{Od})
\begin{equation}
f_{\hat E_A}=\frac{\dim(\D)}{\dim(A)^2}.
\end{equation}
\end{example}

\begin{proposition} \label{trRinv}
For any based ring $K$ we have
$$\sum_{E\in \Irr(K)}\frac{\dim(E)}{f_E}=1.$$
\end{proposition}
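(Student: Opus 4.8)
The plan is to establish $\sum_{E \in \Irr(K)} \dim(E)/f_E = 1$ by computing a suitable trace inside the semisimple algebra $K \otimes k$ in two different ways. The natural object to examine is the central element $\sum_i b_i b_{i^*} = \sum_i \alpha_{?}$; more precisely, I would consider the operator of left multiplication by $b_1 = \mathbf 1$, i.e.\ the identity, and instead look at the regular representation of $K \otimes k$ together with the idempotents associated to the irreducible representations. Since $k$ has characteristic zero and $K \otimes k$ is semisimple (the based ring structure guarantees this via the nondegenerate bilinear form $(b_i, b_j) = \delta_{ij^*}$), it decomposes as a product of matrix algebras indexed by $\Irr(K)$, with the block for $E$ being $\End(E) \cong \mathrm{Mat}_{\dim(E)}(k)$.

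The key computation is to identify the regular trace of the identity element in terms of the formal codegrees. First I would recall that $\alpha_E = \sum_i \Tr(b_i, E)\, b_{i^*}$ acts as $f_E$ on $E$ and as $0$ on every other irreducible, so $\alpha_E$ is $f_E$ times the central idempotent $z_E$ cutting out the $E$-block. The decomposition of the identity $\mathbf 1 = \sum_E z_E = \sum_E f_E^{-1} \alpha_E$ then expresses $b_1$ in terms of the $\alpha_E$. Next I would expand each $\alpha_E$ in the basis $\{b_{i^*}\}$ and isolate the coefficient of $b_1 = b_{1^*}$: since $\Tr(b_1, E) = \dim(E)$ (because $b_1$ acts as the identity on $E$), the coefficient of $b_1$ in $\alpha_E$ is exactly $\dim(E)$. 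Reading off the coefficient of $b_1$ on both sides of $b_1 = \sum_E f_E^{-1}\alpha_E$ gives $1 = \sum_E f_E^{-1}\dim(E)$, which is the claim.

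To make the coefficient-extraction rigorous I would use the standard pairing trick: define the linear functional $\lambda \colon K \otimes k \to k$ sending $b_i \mapsto \delta_{i,1}$ (the coefficient of the unit), and apply it to both sides of $\mathbf 1 = \sum_E f_E^{-1}\alpha_E$. By definition $\lambda(\mathbf 1) = 1$, while $\lambda(\alpha_E) = \sum_i \Tr(b_i,E)\,\delta_{i^*,1} = \Tr(b_1, E) = \dim(E)$, using $i^* = 1 \iff i = 1$. This yields the identity immediately without manipulating full matrix blocks.

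The main obstacle is justifying the resolution of the identity $\mathbf 1 = \sum_E f_E^{-1}\alpha_E$, which rests on two points that must be verified carefully: that $K \otimes k$ is semisimple (so that the central idempotents $z_E$ exist and sum to $\mathbf 1$), and that $\alpha_E = f_E\, z_E$ exactly, i.e.\ $\alpha_E$ acts as the scalar $f_E$ on $E$ and annihilates the other blocks. Both are supplied by \cite[Lemma 2.3]{Od} as cited in the text, so the remaining work is purely bookkeeping on the coefficient of the basis element $b_1$. The only subtlety is ensuring that $f_E \neq 0$ so that $f_E^{-1}$ makes sense, but this is exactly the nonvanishing asserted in the definition of the formal codegree.
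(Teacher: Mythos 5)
Your argument is correct and is essentially the paper's own proof unpacked: the paper simply invokes Lusztig's Proposition 19.2(c) (the expansion $\tau=\sum_E f_E^{-1}\chi_E$ of the symmetrizing trace $\tau(b_i)=\delta_{i,1}$ into irreducible characters) evaluated at $a=\mathbf{1}$, and your resolution of the identity $\mathbf{1}=\sum_E f_E^{-1}\alpha_E$ followed by extraction of the coefficient of $b_1$ is exactly the standard proof of that statement specialized to this case. One small caution: nondegeneracy of the form $(b_i,b_j)=\delta_{ij^*}$ by itself only makes $K\ot k$ a symmetric Frobenius algebra, so for semisimplicity you should appeal to the positivity of the Hermitian form attached to the anti-involution $b_i\mapsto b_{i^*}$ (a standard fact, implicit in the setup of \cite[Lemma 2.3]{Od}).
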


\begin{proof} Use \cite[Proposition 19.2(c)]{Lu} with $a=1$.
\end{proof}

\begin{remark} \label{trRinv1}
It follows from \cite[Lemma 2.6]{Od} that the eigenvalues of the operator $R$ (see 
Proposition \ref{dnumt})
are $\dim(E)f_E$ appearing with multiplicity $\dim(E)^2$. Thus Proposition \ref{trRinv} states
that $\Tr(R^{-1})=1$.
\end{remark}

\begin{remark} \label{fctotp}
It was observed in \cite[proof of Lemma 8.49]{ENO} that the operator $R$
is positive definite. Thus Remark \ref{trRinv1} implies that $\dim(E)f_E$ is totally
positive for any $E\in \Irr(K(\C))$. Thanks to Proposition \ref{trRinv} this can be
improved to $f_E\ge \dim(E)$.
In particular we have $f_E\ge 1$ for any
representation $E$.
\end{remark}

Let $\C$ be a spherical fusion category and let $E\in \Irr(K(\C))$. Then $K(\Z(\C))$ acts on $E$
via the forgetful homomorphism $K(\Z(\C))\to K(\C)$; by Schur's lemma this action factorizes via
a character of $K(\Z(\C))$. Let $\hat E\in \Irr(K(\Z(\C)))$ be the corresponding 1-dimensional
representation of $K(\Z(\C))$. Recall (see Section \ref{precenter}) that $\Z(\C)$ is a modular tensor category
with $\dim(\Z(\C))=\dim(\C)^2$. Thus by Example \ref{moduex} there exists a unique $A_E\in \O(\Z(\C))$
such that $\hat E=\hat E_{A_E}$; in addition
\begin{equation} \label{squareofw}
f_{\hat E}=\frac{\dim(\C)^2}{\dim(A_E)^2}.
\end{equation}

\begin{theorem} \label{I1}
For a spherical fusion category $\C$
the assignment $E\mapsto A_E$ is an embedding $\Irr(K(\C))\subset \O(\Z(\C))$. Its image consists
of $A\in \O(\Z(\C))$ with $[I(\be):A]\ne 0$. Moreover, we have
\begin{equation} \label{multipl}
[I(\be):A_E]=\dim(E),\;\; \mbox{and}\;\; \dim{A_E}=\frac{\dim(\C)}{f_E}.
\end{equation}
\end{theorem}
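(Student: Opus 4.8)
The plan is to connect the representation theory of $K(\C)$ with that of the modular category $\Z(\C)$ through the forgetful and induction functors, using the established fact that $I(\be)$ is an étale algebra whose simple modules are exactly the $I(X)$. First I would make precise the assignment $E \mapsto A_E$: given $E \in \Irr(K(\C))$, the action of $K(\Z(\C))$ on $E$ through the forgetful homomorphism factors (by Schur) through a character, which by Example \ref{moduex} corresponds to a unique $A_E \in \O(\Z(\C))$, and \eqref{squareofw} already records $f_{\hat E} = \dim(\C)^2/\dim(A_E)^2$. The core of the argument is to identify which $A \in \O(\Z(\C))$ arise this way, and I expect the key structural input to be Frobenius reciprocity for the adjoint pair $(F, I)$, namely $[I(\be):A] = \dim_k \Hom_{\Z(\C)}(A, I(\be)) = \dim_k \Hom_{\C}(F(A), \be)$, which counts the multiplicity of the unit $\be$ in the $\C$-object $F(A)$.

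The central idea is that $E$ can be realized concretely as a space built from $\Hom(F(A_E), \be)$, so that $\dim(E)$ equals the multiplicity $[I(\be):A_E]$; this would simultaneously prove injectivity of $E \mapsto A_E$, identify the image with $\{A : [I(\be):A] \neq 0\}$, and give the first equation in \eqref{multipl}. Concretely, I would argue that the simple $I(\be)$-modules are exactly the $I(X)$ for $X \in \O(\C)$ (this is cited from \cite[Lemma 3.5]{DMNO}), so that $\Hom_{I(\be)}(I(\be), I(X))$ organizes the decomposition of $I(\be)$ acting on the category of modules. The representation $E$ should correspond to an isotypic block of this module category: the multiplicity space $\Hom_{\Z(\C)}(A_E, I(X))$, as $X$ ranges over $\O(\C)$, carries an action of $K(\C)$ making it a $K(\C)$-module, and I would show this module is irreducible and isomorphic to $E$. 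The dimension count then reads $\dim(E) = \dim_k \Hom_{\Z(\C)}(A_E, I(\be))$, since taking $X = \be$ and using $I(\be)$-module structure pins down exactly the multiplicity $[I(\be):A_E]$.

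Once $\dim(E) = [I(\be):A_E]$ is in hand, the second equation in \eqref{multipl} follows by combining \eqref{squareofw} with Proposition \ref{trRinv} or, more directly, with the dimension formula \eqref{dimI}: we have $\dim(I(\be)) = \dim(\C)$, and matching $\dim A_E = \dim(\C)^2/(f_{\hat E}\,\dim A_E)$ from \eqref{squareofw} against the relation $f_{\hat E} = f_E$ (the formal codegree of $\hat E$ as a one-dimensional $K(\Z(\C))$-representation should agree with the formal codegree $f_E$ of the underlying $E$, up to the normalization built into Example \ref{moduex}) yields $\dim A_E = \dim(\C)/f_E$. I would verify this codegree compatibility carefully, since it is the point where the scalar $f_E$ attached to $K(\C)$ must be reconciled with the modular data of $\Z(\C)$.

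The hard part, I expect, is establishing that the multiplicity space $\bigoplus_{X} \Hom_{\Z(\C)}(A_E, I(X))$ really is an irreducible $K(\C)$-module isomorphic to $E$, rather than merely a representation on which the character $\hat E$ is visible. This requires showing the $K(\C)$-action is well-defined and irreducible, which I anticipate handling by using the Frobenius property $F(I(Y)) = \bigoplus_X X \ot Y \ot X^*$ from Proposition \ref{FI} to compute how tensoring by simple objects of $\C$ permutes and mixes these $\Hom$-spaces, and then matching the resulting matrices against the regular-type action encoded in the formal codegree $\alpha_E = \sum_i \Tr(b_i, E)\, b_{i^*}$. The consistency of these two descriptions — the categorical one via $\Z(\C)$ and the purely ring-theoretic one via formal codegrees — is what makes the correspondence an honest bijection onto the support of $[I(\be)]$, and closing that identification rigorously is the main obstacle.
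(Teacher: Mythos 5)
Your setup (the definition of $E\mapsto A_E$ via Schur's lemma and Example \ref{moduex}, the adjunction identity $[I(\be):A]=\dim_k\Hom_\C(F(A),\be)$, and the target $\dim E=[I(\be):A_E]$) matches the paper, but the engine you propose is not the one the paper uses, and as described it has three concrete problems. First, a dimensional inconsistency: you want to realize $E$ as the $K(\C)$-module $\bigoplus_{X}\Hom_{\Z(\C)}(A_E,I(X))$ and then ``take $X=\be$'' to conclude $\dim E=\dim_k\Hom(A_E,I(\be))$; but the total dimension of that direct sum is $\sum_X[F(A_E):X]$, whereas the claim you need concerns only the single graded piece $[F(A_E):\be]$, so even if the direct sum were isomorphic to $E$ the conclusion would not follow. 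The statement that would actually do the job is that $\Hom_{\Z(\C)}(A_E,I(\be))$ is the irreducible module for the $E$-block of $\End_{\Z(\C)}(I(\be))\simeq K(\C)\ot k$ --- but that identification is Corollary \ref{sumofm}, which the paper deduces \emph{from} Theorem \ref{I1}; taking it as input is circular. Second, the codegree compatibility you plan to ``verify carefully'' is stated wrongly: the correct relation is $f_{\hat E}=f_E^2$ (\cite[Lemma 3.1]{Od}), not $f_{\hat E}=f_E$; with your version, \eqref{squareofw} would give $\dim A_E=\dim(\C)/\sqrt{f_E}$ rather than $\dim(\C)/f_E$. Third, you have no mechanism for the surjectivity claim that every $A$ with $[I(\be):A]\neq 0$ is some $A_E$: exhibiting each $E$ inside $\Hom(-,I(\be))$ only shows each $A_E$ occurs; the paper closes this by matching the counting identity $\sum_A[I(\be):A]^2=\mathrm{rk}(\C)$ from \cite[Proposition 5.6]{ENO} against $\sum_E(\dim E)^2$.

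The paper's actual argument is purely numerical once the embedding is known (which it gets from the surjection of $K(\Z(\C))\ot k$ onto the center of $K(\C)\ot k$, \cite[Lemma 8.49]{ENO}): it expands $[I(\be)]=\sum_A[I(\be):A]\frac{\dim A}{\dim\C}\alpha_{\hat E_A}$ using Proposition \ref{I1eigen} --- the $s$-matrix eigenvector property of $[I(\be)]$, which your plan never invokes --- reads off that $[I(\be)]$ acts on $E$ by the scalar $[I(\be):A_E]\dim(\C)/\dim(A_E)$, and equates this with $\dim(E)f_E$ coming from $[F(I(\be))]=\sum_ib_ib_{i^*}=R$. Combined with $f_E=\pm\dim(\C)/\dim(A_E)$ and positivity, this yields \eqref{multipl} in one stroke. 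If you wish to salvage your module-theoretic route, you would essentially have to establish the content of Corollary \ref{sumofm} and Remark \ref{funMN} independently, which is harder than the theorem itself.
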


\begin{proof} It is known that $K(\Z(\C))\ot k$ maps surjectively on the center of $K(\C)\ot k$, see
\cite[Lemma 8.49]{ENO}; the first statement follows. Also \cite[Lemma 3.1]{Od} states that
$f_{\hat E}=f_{E}^2$, so \eqref{squareofw} implies $f_E=\pm \frac{\dim(\C)}{\dim(A_E)}$.

In view of Proposition \ref{I1eigen} and Example \ref{moduex} we have 
$$[I(\be)]=\sum_{A\in \O(\Z(\C))}[I(\be):A]\frac{\dim(A)}{\dim(\C)}\alpha_{\hat E_A}.$$
Thus $[I(\be)]$ acts on $E\in \Irr(K(\C))$ by the scalar 
$$[I(\be):A_E]\frac{\dim(A_E)}{\dim(\C)}f_{\hat E_{A_E}}=[I(\be):A_E]\frac{\dim(\C)}{\dim(A_E)},$$
see \eqref{squareofw}. On the other hand by Proposition \ref{FI} we have $[F(I(\be)]=R$, so by
\cite[Lemma 2.6]{Od} it acts on $E$ by the scalar $\dim(E)f_E$. Hence
$$\dim(E)f_E=[I(\be):A_E]\frac{\dim(\C)}{\dim(A_E)}.$$
Combining this with $f_E=\pm \frac{\dim(\C)}{\dim(A_E)}$ we get \eqref{multipl}.

It is known (see \cite[Proposition 5.6]{ENO}) that
$\sum_{A\in \O(\Z(\C))}[I(\be):A]^2$ equals the rank of $\C$. 
We already proved that each object $A_E$ appears in $I(\be)$ with multiplicity $\dim(E)$. Thus
$$\sum_{E\in \Irr(K(\C))}[I(\be):A_E]^2=\sum_{E\in \Irr(K(\C))}\dim(E)^2=\dim(K(\C)\ot k)=
\sum_{A\in \O(\Z(\C))}[I(\be):A]^2.$$
It follows that any object $A\in \O(\Z(\C))$ with $[I(\be):A]\ne 0$ is of the form $A_E$ for some
$E\in \Irr(K(\C))$. Theorem is proved.
\end{proof}

\begin{corollary} \label{fcdivis}
{\em (cf \cite[Proposition 8.22]{ENO})} For a not necessary spherical fusion category $\C$
the formal codegrees $f_E$ divide $\dim(\C)$.
\end{corollary}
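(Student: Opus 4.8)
The plan is to reduce to the spherical situation already settled in Theorem \ref{I1} by working inside the Drinfeld center $\Z(\C)$. The key structural input is that for an \emph{arbitrary} fusion category $\C$ the center $\Z(\C)$ is a modular tensor category carrying a \emph{canonical} spherical structure — in particular $\Z(\C)$ is spherical even if $\C$ admits no pivotal structure whatsoever — and it satisfies $\dim(\Z(\C))=\dim(\C)^2$. Since both the formal codegrees $f_E$ and the global dimension $\dim(\C)$ are defined without reference to any pivotal or spherical structure on $\C$, the divisibility statement makes sense in this generality, and the strategy is to deduce it from properties of $\Z(\C)$ alone.

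First I would observe that the derivation, in the opening of the proof of Theorem \ref{I1}, of the identity $f_E=\pm\dim(\C)/\dim(A_E)$ never actually used sphericity of $\C$ itself. Indeed, it invokes only: the surjection of $K(\Z(\C))\ot k$ onto the center of $K(\C)\ot k$ from \cite[Lemma 8.49]{ENO}, which attaches to each $E\in\Irr(K(\C))$ a one-dimensional representation $\hat E$ of $K(\Z(\C))$ and hence, by modularity of $\Z(\C)$ together with Example \ref{moduex}, a simple object $A_E\in\O(\Z(\C))$ with $f_{\hat E}=\dim(\C)^2/\dim(A_E)^2$ as in \eqref{squareofw}; and the relation $f_{\hat E}=f_E^2$ of \cite[Lemma 3.1]{Od}. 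Each of these holds for any fusion category once its center is known to be modular and spherical. Combining the two formulas gives $f_E=\pm\dim(\C)/\dim(A_E)$, that is, $\dim(\C)/f_E=\pm\dim(A_E)$ for a simple object $A_E$ of $\Z(\C)$.

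It then remains to note that $\dim(A_E)$ is an algebraic integer. This is immediate because $\dim\colon K(\Z(\C))\to k$ is a ring homomorphism on the based ring $K(\Z(\C))$, which is free of finite rank over $\BZ$: the value $\dim(A_E)$ is a root of the characteristic polynomial of the integer matrix of left multiplication by $[A_E]$, hence integral over $\BZ$. Consequently $\dim(\C)/f_E=\pm\dim(A_E)$ is an algebraic integer, which is precisely the assertion that $f_E$ divides $\dim(\C)$.

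The only delicate point — and where I expect the genuine content to reside — is the verification that nothing in the argument secretly used a pivotal or spherical structure on $\C$, but only the corresponding properties of $\Z(\C)$. Concretely, one must confirm that \cite[Lemma 8.49]{ENO} and \cite[Lemma 3.1]{Od}, together with the modularity and canonical sphericity of the center and the identity $\dim(\Z(\C))=\dim(\C)^2$, are all available for a fusion category with no assumed pivotal structure. Granting these facts about the center, the remainder of the proof is formal.
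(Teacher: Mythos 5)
Your reduction stands or falls on the opening claim that for an \emph{arbitrary} fusion category $\C$ the center $\Z(\C)$ is a modular tensor category carrying a canonical spherical structure, and that is precisely the input that is not available. What is known for general $\C$ is that $\Z(\C)$ is a fusion category with $\dim(\Z(\C))=\dim(\C)^2$ and that its braiding is non-degenerate in the sense of a trivial M\"uger center; but the modularity statement this paper actually invokes (\cite[Theorem 1.2]{Mu}, \cite[Theorem 2.15]{ENO}, quoted in Section \ref{precenter}) is for \emph{spherical} $\C$, where the spherical structure on $\Z(\C)$ is induced from the one on $\C$. Producing a pivotal, let alone spherical, structure on $\Z(\C)$ when $\C$ is not assumed pivotal is not a formal consequence of non-degeneracy and is not provided by any of the references used here; it sits in the same circle of open questions as \cite[Conjecture 2.8]{ENO}. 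Without a spherical structure on $\Z(\C)$ there is no $S$-matrix, hence no bijection $\O(\Z(\C))\to \Irr(K(\Z(\C)))$ as in Example \ref{moduex}, no identity \eqref{squareofw}, and no simple object $A_E$ whose dimension could witness the divisibility. So the step you flag at the end as ``the only delicate point'' is in fact a genuine gap rather than a routine verification, and the rest of the argument cannot start without it.

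The paper sidesteps the issue entirely by passing to the sphericalization $\tilde \C$ of \cite[Remark 3.1]{ENO} (see Section \ref{sphn}): this is an honestly spherical fusion category with $\dim(\tilde \C)=2\dim(\C)$, and for every $E\in \Irr(K(\C))$ the pullback of $E$ along the surjection $K(\tilde \C)\to K(\C)$ has formal codegree $2f_E$, because the basis of $K(\tilde \C)$ doubles that of $K(\C)$ while $[\be_-]$ acts trivially on such pullbacks. Applying the spherical case of the statement (i.e.\ Theorem \ref{I1} together with the integrality of $\dim(A_E)$, exactly as in your final step) to $\tilde \C$ gives that $2f_E$ divides $2\dim(\C)$, which is the assertion. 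If you want to rescue your route rather than adopt this one, you would need either to prove that $\Z(\C)$ is spherical for non-pivotal $\C$ or to find a substitute for the modular data of the center; the sphericalization is precisely the standard substitute.
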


\begin{proof} In a case of spherical $\C$ the result is immediate from Theorem \ref{I1} since 
$\dim(A_E)$ is an algebraic integer. For a general fusion category $\C$ one considers its
{\em sphericalization} $\tilde \C$, see \cite[Remark 3.1]{ENO} or Section \ref{sphn} below. This is spherical
category of global dimension $2\dim(\C)$; moreover $2f_E$ is a formal codegree of $\tilde \C$
for any $E\in \Irr(K(\C))$. The result follows.
\end{proof}

Assume again that $\C$ is spherical. Let $L(\C)$ be the {\em dimension field} of $\C$, that is the 
field generated over $\BQ$ by the dimensions of simple objects of $\C$.  

\begin{corollary} \label{dfield}
The formal codegrees of $\C$ lie in the field $L(\C)$.
\end{corollary}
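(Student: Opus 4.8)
The plan is to read off the formal codegrees from the explicit description of $I(\be)$ provided by Theorem \ref{I1}, and then to observe that every quantity appearing in that description already lies in $L(\C)$. By \eqref{multipl} we have $\dim(A_E)=\dim(\C)/f_E$ for each $E\in\Irr(K(\C))$, equivalently $f_E=\dim(\C)/\dim(A_E)$; since the assignment $E\mapsto A_E$ exhausts all formal codegrees, it suffices to prove that both $\dim(\C)$ and $\dim(A_E)$ lie in $L(\C)$.

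For the global dimension this is immediate: $\dim(\C)=\sum_{X\in\O(\C)}\dim(X)^2$ is a polynomial in the generators $\{\dim(X)\}_{X\in\O(\C)}$ of $L(\C)$, hence $\dim(\C)\in L(\C)$.

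For $\dim(A_E)$ I would invoke the forgetful functor $F\colon\Z(\C)\to\C$. Since $\Z(\C)$ carries the canonical spherical structure for which $F$ is a pivotal tensor functor, $F$ preserves categorical dimensions, so $\dim(A_E)=\dim(F(A_E))$. As $F(A_E)$ is an object of $\C$, it decomposes into simples and
$$\dim(F(A_E))=\sum_{X\in\O(\C)}[F(A_E):X]\,\dim(X)$$
is a nonnegative integral combination of the $\dim(X)$; therefore $\dim(A_E)\in L(\C)$. (Only the integrality of the multiplicities $[F(A_E):X]$ is needed here; they could also be extracted from Proposition \ref{FI}, but that level of detail is unnecessary.)

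Combining these, and noting that $\dim(A_E)\neq 0$ because $f_E\dim(A_E)=\dim(\C)\neq 0$, I conclude $f_E=\dim(\C)/\dim(A_E)\in L(\C)$. The one step deserving care is the compatibility of the spherical structures on $\C$ and $\Z(\C)$ that makes $F$ dimension-preserving; this is standard for the center of a spherical fusion category and constitutes the only input beyond Theorem \ref{I1}. An alternative route, via the Galois action $\sigma\mapsto(f_{\sigma(E)}=\sigma(f_E))$ on $\Irr(K(\C))$, is available but would still require identifying the fixed field with $L(\C)$, so the dimension-field argument above is the more direct one.
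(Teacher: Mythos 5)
Your proposal is correct and follows essentially the same route as the paper: the paper's proof likewise observes that $\dim(A_E)=\dim(F(A_E))$ lies in $L(\C)$, that $\dim(\C)\in L(\C)$, and then reads off $f_E=\dim(\C)/\dim(A_E)\in L(\C)$ from \eqref{multipl}. Your version merely spells out the intermediate steps (the decomposition of $F(A_E)$ into simples and the dimension-preservation of the forgetful functor) that the paper leaves implicit.
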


\begin{proof} For any $A\in \O(\Z(\C))$ we have $\dim(A)=\dim(F(A))\in K$. Since $\dim(\C)\in K$ the result
follows from \eqref{multipl}.
\end{proof}

\begin{corollary}\label{sumofm}
There is an isomorphism of associative algebras {\em 
$$K(\C)\ot k\simeq \End(I(\be)).$$}
\end{corollary}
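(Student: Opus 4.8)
The plan is to deduce this from Theorem \ref{I1} together with the semisimplicity of the two algebras in question. By Theorem \ref{I1} we have in $\Z(\C)$ the decomposition
\[
I(\be)\simeq \bigoplus_{E\in \Irr(K(\C))} A_E^{\oplus \dim(E)},
\]
where the objects $A_E$ are pairwise distinct simple objects of $\Z(\C)$ (because $E\mapsto A_E$ is an embedding) and each occurs with multiplicity exactly $\dim(E)$. Since $\Z(\C)$ is a fusion category, hence abelian and semisimple, Schur's lemma gives $\End(A_E)=k$ and $\Hom(A_E,A_{E'})=0$ for $E\ne E'$. First I would therefore compute
\[
\End(I(\be))\simeq \bigoplus_{E\in \Irr(K(\C))} \End\!\big(A_E^{\oplus \dim(E)}\big)\simeq \bigoplus_{E\in \Irr(K(\C))} \mathrm{Mat}_{\dim(E)}(k)
\]
as associative algebras.

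On the other hand, $K(\C)\ot k$ is semisimple: this is standard for based rings of finite rank and in fact underlies the very definition of formal codegrees, the central elements $\alpha_E$ acting by the nonzero scalars $f_E\ge 1$ (cf.\ Remark \ref{fctotp}), so that the $\alpha_E/f_E$ form a complete system of orthogonal central idempotents. Hence, by the Wedderburn--Artin theorem over the algebraically closed field $k$,
\[
K(\C)\ot k\simeq \bigoplus_{E\in \Irr(K(\C))} \mathrm{Mat}_{\dim(E)}(k).
\]
Comparing the two displays, both algebras are direct sums of matrix algebras indexed by the same set $\Irr(K(\C))$ with matching block sizes $\dim(E)$, and the desired isomorphism of associative algebras follows.

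The substantive content is entirely absorbed into Theorem \ref{I1}: the only nontrivial inputs are that the simple summands $A_E$ of $I(\be)$ are distinct and occur with multiplicity precisely $\dim(E)$, so that $\End(I(\be))$ splits into blocks of exactly the right sizes. Granting this, the remaining ingredient---semisimplicity of $K(\C)\ot k$---is routine, and the corollary becomes a bookkeeping consequence of the decomposition of $I(\be)$; I do not expect a genuine obstacle. If one wanted a canonical rather than merely abstract isomorphism, I would instead start from the adjunction identification $\End(I(\be))=\Hom_{\Z(\C)}(I(\be),I(\be))\simeq \Hom_{\C}(F(I(\be)),\be)$ and use Proposition \ref{FI} to build a natural map, afterwards matching it with the Wedderburn decomposition above; but for the stated existence result the comparison of block sizes already suffices.
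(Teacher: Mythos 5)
Your argument is correct and coincides with the paper's own proof, which likewise observes that Theorem \ref{I1} makes $\End(I(\be))$ a direct sum of matrix algebras of sizes $\dim(E)$ while semisimplicity of $K(\C)\ot k$ gives the same Wedderburn decomposition; you have merely spelled out the details that the paper compresses into one sentence. Your closing remark about the lack of a canonical isomorphism is also consonant with the paper, which explicitly notes after the corollary that it does not know whether a canonical choice exists.
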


\begin{proof} Both algebras are isomorphic to direct sum of matrix algebras of sizes $\dim(E)$ indexed by $E\in \Irr(K(\C))$.
\end{proof}

We don't know whether there is a canonical choice of the isomorphism in Corollary \ref{sumofm}. 

\begin{example} Assume that $\C$ is dual to a modular tensor category $\D$. 
Then $\Z(\C)=\D \boxtimes \D^{rev}$ (see \cite[Theorem 7.10]{Mu})
and $[I(\be)]\in K(\Z(\C))=K(\D)\ot K(\D)$ is known as {\em modular invariant} associated 
with $\C$ (or rather with module category $\M$ over $\D$ such that $\C=\D^*_{\M}$), 
see \cite[Definition 5.5]{BEK}, \cite[Section 5.3]{FSR}. 
In this case our results are well known: 
Theorem \ref{zero} says that the modular invariant commutes with $T-$matrix 
(cf \cite[Theorem 5.7]{BEK},
\cite[Theorem 5.1]{FSR}), Proposition \ref{I1eigen} says that modular
invariant commutes with $S-$matrix (cf \cite[Theorem 5.7]{BEK}, \cite[Theorem 5.1]{FSR}), and 
Corollary \ref{sumofm} expresses $K(\C)\ot k$ in terms of the modular invariant
(cf \cite[Theorem 6.8]{BEK}, \cite[Theorem O]{FRSb}).
\end{example}

\begin{example} \label{xuex}
(joint with D.~Nikshych) It follows immediately from Corollary \ref{sumofm} that $K(\C)$ is commutative if and 
only if $I(\be)$ is multiplicity free. Let us consider minimal \'etale subalgebras of $I(\be)$. Such subalgebras intersect trivially,
so no nontrivial simple summand of $I(\be)$ appears in two such subalgebras. Thus the number of such subalgebras is at
most $rk(\C)-1$. In view of \cite[Theorem 4.10]{DMNO} this translates into the following statement: the number of maximal fusion subcategories of $\C$ is at most $rk(\C)-1$. It was conjectured in 
\cite[Conjecture 1.2]{GX} that this statement holds for any fusion category $\C$
(possibly with non-commutative $K(\C)$). We don't know whether this statement fails for a unital based ring $K$ of finite rank.
\end{example}

\begin{remark} \label{funMN}
 Let $\C=\oplus_{i,j}\C_{ij}$ be an indecomposable multi-fusion category (see \cite[Section 2.4]{ENO}) 
with decomposable unit object $\be=\oplus_i\be_i$. Assume that $\Z(\C_1)$ 
is spherical and $\theta_{I(\be)}=\id_{I(\be)}$. 
 The proof of Theorem \ref{I1} and Corollary \ref{sumofm} extends with trivial changes to this setting. 
Moreover, Theorem \ref{I1} applied to fusion category $\C_{ii}$ implies that the dimension of $[\be_i]E$
equals $[I(\be_i):A_E]$. It follows that we can choose an isomorphism in 
Corollary \ref{sumofm} sending $[\be_i]\in K(\C_1)$ to the projection of $I(\be)$ onto $I(\be_i)$. 
This implies that $(K(\C_{ii})\ot k, K(\C_{jj})\ot k)-$bimodule $K(\C_{ij})\ot k$ can be identified 
with $(\End(I(\be_i)), \End(I(\be_j)))-$bimodule $\Hom(I(\be_i),I(\be_j))$ where $K(\C_{ii})\ot k$ and 
$K(\C_{jj})\ot k$ are identified with $\End(I(\be_i))$ and $\End((\be_j))$ as in Corollary \ref{sumofm}.
In particular, the number of simple objects in $\C_{ij}$ equals $\dim \Hom(I(\be_i),I(\be_j))$. 

For example this applies as follows: let $\D$ be a spherical fusion category and let
$\M, \N$ be indecomposable semisimple module categories over $\D$. Let $M,N$ be Lagrangian subalgebras in $\Z(\D)$ associated with $\M$ and $\N$ in \cite[Section 4.2]{DMNO} 
(so $M=I_{\M}(\be_\M)$ where $I_\M$ is the adjoint of the canonical
functor $\Z(\D)\to \D^*_\M$ to the dual of $\D$ with respect to $\M$, and similarly for $N$). Assume that 
$\theta_M=\id_M$ and $\theta_N=\id_N$. Then applying the above statement to $\C=\D^*_{\M\oplus \N}$ we get
the following: $(K(\D^*_\M)\ot k,K(\D^*_\N)\ot k)-$bimodule $K(\Fun_\D(\M,\N))\ot k$
is isomorphic to $(\End(M),\End(N))-$bimodule $\Hom(M,N)$ where $K(\D^*_\M)\ot k$ is identified 
with $\End(M)$ as 
in Corollary \ref{sumofm} and similarly for $K(\D^*_\N)$. In particular the number of simple objects in 
$\Fun_\D(\M,\N)$ equals $\dim \Hom(M,N)$. This statement can be considered as a generalization
of the theory of quantum Manin triples from \cite[Section 4.4]{DMNO}. 
In the case of modular category $\D$ this result was obtained in \cite[Theorem 6.12]{BEK} and
\cite[Theorem 5.18]{FSR}.

Note also that the condition $\theta_M=\id_M$ (and similalrly for $N$) is satisfied automatically when the dimensions
in the category $\Z(\D)$ are positive. In this case the category $\D^*_\M$ is pseudo-unitary. Indeed, it is known that
$\Z(\D_\M^*)\simeq \Z(\D)$ (see \cite[Remark 3.18]{MuI}, \cite[Corollary 2.6]{Oorbi}), 
so the result follows from \cite[Theorem 2.15 and Proposition 8.12]{ENO}.
Thus the spherical structure on $\Z(\D)=\Z(\D^*_\M)$ is the same as the one obtained from a spherical structure on 
$\D^*_\M$ (defined in \cite[Proposition 8.23]{ENO}) and one deduces $\theta_M=\id_M$ from Theorem \ref{zero}.
On the other hand the condition $\theta_M=\id_M$ is not always satisfied: it fails for $\D=\Vec_{\BZ/2\BZ}$ 
(see \cite[Example 1 in Section 2]{ENO}) with non-standard
spherical structure (so the dimension of the nontrivial object is -1) and module category $\M$ with one simple object.
Thus it would be interesting to investigate whether the conclusions above hold true in the case when $\theta_M=\id_M$
fails (this is the case in the example above since we can replace the non-standard spherical structure by the standard one).
\end{remark}

\begin{example} Let $\C$ be a spherical fusion category and let $I(\be)\in \Z(\C)$ be as above. 
It was asked by A.~Kitaev whether there exists an \'etale algebra $\tilde I$ such that $\tilde I=I(\be)$
as an object of $\Z(\C)$ but the \'etale algebras $\tilde I$ and $I(\be)$ are not isomorphic. 
In view of Remark \ref{funMN} this translates into the following question: is there a module
category $\M$ over $\C$ such that $K(\C)-$module $K(\M)\ot k$ is isomorphic to $K(\C)\ot k$? 
Such examples do exist: for example let $\V$ be $\BZ/2\BZ-$graded fusion category constructed in 
\cite[Theorem A.5.1]{CMS}; then $\V_1$ considered as a module category over $\C=\V_0$
satisfies the condition above. Thus the question above has a positive answer.
\end{example}

\subsection{Pseudo-unitary inequality}
Let us assume that the category $\C$ is {\em pseudo-unitary}, see \cite[Section 8.4]{ENO}. Thus the category
$\C$ has a spherical structure such that the dimensions of objects are nonnegative.

\begin{theorem} Let $\C$ be a pseudo-unitary fusion category. Then the formal codegrees
of $\C$ satisfy
\begin{equation} \label{psunit}
\sum_{E\in \Irr(K(\C))}\frac1{f_E^2}\le \frac12\left(1+\frac1{\dim(\C)}\right).
\end{equation}
\end{theorem}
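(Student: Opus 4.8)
I need to prove the pseudo-unitary inequality
$$\sum_{E\in \Irr(K(\C))}\frac1{f_E^2}\le \frac12\left(1+\frac1{\dim(\C)}\right).$$

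Let me think about what tools I have available.The plan is to reinterpret the left-hand side inside the modular category $\Z(\C)$ and then exploit the reality of the Gauss sum together with positivity of dimensions. First I would apply Theorem \ref{I1}: the map $E\mapsto A_E$ identifies $\Irr(K(\C))$ with the set of simple objects $A\in\O(\Z(\C))$ occurring in $I(\be)$, and $\dim(A_E)=\dim(\C)/f_E$. Hence $1/f_E^2=\dim(A_E)^2/\dim(\C)^2$, and summing gives
$$\sum_{E\in\Irr(K(\C))}\frac1{f_E^2}=\frac1{\dim(\C)^2}\sum_{A:\,[I(\be):A]\ne 0}\dim(A)^2.$$
So it suffices to prove $S\le\tfrac12\dim(\C)(\dim(\C)+1)$, where $S=\sum_{A\subset I(\be)}\dim(A)^2$ runs over the distinct simple summands of $I(\be)$.

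The key input is the Gauss sum of $\Z(\C)$. As computed above (recovering M\"uger's theorem), $\sum_{A\in\O(\Z(\C))}\theta_A\dim(A)^2=\dim(\C)$. By Theorem \ref{zero} every simple summand $A$ of $I(\be)$ has $\theta_A=1$, so these terms contribute exactly $S$ to the Gauss sum. Splitting off the complementary simple objects I would write $\dim(\C)-S=\sum_{A\not\subset I(\be)}\theta_A\dim(A)^2$. The crucial point is that the left-hand side is a real number, while each $\theta_A$ has absolute value $1$.

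Now pseudo-unitarity enters: with the canonical structure on $\Z(\C)$ the dimensions $\dim(A)=\dim(F(A))$ are strictly positive, since $\C$ is pseudo-unitary and $F$ is faithful. Taking real parts and using $\operatorname{Re}(\theta_A)\ge -1$ I would estimate
$$\dim(\C)-S=\sum_{A\not\subset I(\be)}\operatorname{Re}(\theta_A)\dim(A)^2\ \ge\ -\sum_{A\not\subset I(\be)}\dim(A)^2=-(\dim(\C)^2-S),$$
using $\sum_{A\in\O(\Z(\C))}\dim(A)^2=\dim(\Z(\C))=\dim(\C)^2$. Rearranging yields $2S\le\dim(\C)^2+\dim(\C)$, which is exactly the desired bound after dividing by $\dim(\C)^2$. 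The main obstacle is locating the right source of the factor $\tfrac12$: the naive bound $S\le\dim(\C)^2$ only gives $\sum 1/f_E^2\le 1$, and the improvement relies precisely on the Gauss sum being as small as $\dim(\C)$ (rather than $\dim(\C)^2$) combined with the worst-case twist contribution $\operatorname{Re}(\theta_A)=-1$. Verifying that positivity of the center-dimensions is genuinely available in the pseudo-unitary case is the essential hypothesis that makes the real-part estimate valid.
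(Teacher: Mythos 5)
Your argument is correct. It rests on the same three ingredients as the paper's proof --- Theorem \ref{I1} to rewrite $\sum_E f_E^{-2}$ as $\dim(\C)^{-2}S$ with $S=\sum_{[I(\be):A]\ne 0}\dim(A)^2$, Theorem \ref{zero} for the twists, and positivity of dimensions in $\Z(\C)$ --- but it aggregates them in a different order. The paper works one simple object $\be\ne X\in \O(\C)$ at a time: it applies the triangle inequality to the identity $\Tr(\theta_{I(X)})=0$ to get $\sum_{[I(\be):A]\ne 0}[I(X):A]\dim(A)\le \frac12\dim(X)\dim(\C)$, and only afterwards sums against $\dim(X)$. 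You sum first: you start from the Gauss-sum identity $G(\Z(\C))=\dim(\C)$, which the paper's Example derives as precisely the $\dim(X)$-weighted sum of those trace identities, and then apply $\operatorname{Re}(\theta_A)\ge -1$ once, together with $\sum_{A\in\O(\Z(\C))}\dim(A)^2=\dim(\Z(\C))=\dim(\C)^2$. After unwinding, the two estimates coincide: your inequality $\dim(\C)-S\ge S-\dim(\C)^2$ is equivalent to the paper's \eqref{pochti} once one substitutes $\sum_{[I(\be):A]\ne 0}[F(A):\be]\dim(A)=\dim(\C)$. Your packaging is shorter and makes the origin of the factor $\frac12$ transparent (the Gauss sum is only $\dim(\C)$ while the total mass is $\dim(\C)^2$, and the worst case is $\theta_A=-1$ on the complement of $I(\be)$); the paper's object-by-object version additionally produces the pointwise bounds for each $X$, which are a priori finer than their aggregate, though they are not used elsewhere. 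Both versions need $|\theta_A|=1$ under the chosen embedding --- you invoke it explicitly, the paper uses it implicitly --- and it holds because the twists of a modular category are roots of unity.
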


\begin{remark} It follows from Remark \ref{trRinv1} that the LHS of \eqref{psunit} equals $\Tr(R^{-2})$.
Since $\dim(\C)$ is the largest eigenvalue of $R$, the inequality \eqref{psunit} can be stated purely
in terms of operator $R$. 
\end{remark}

\begin{proof} Let $X\in \O(\C)\setminus \{ \be \}$. By Theorem \ref{zero}
$$\sum_{A\in \O(\Z(\C))}[I(X):A]\theta_A\dim(A)=\Tr(\theta_{I(X)})=0,$$
and $\theta_A=1$ for any $A$ appearing in $I(\be)$. Thus
$$\sum_{\substack{A\in \O(\Z(\C))\\ [I(\be):A]\ne 0}}[I(X):A]\dim(A)=
-\sum_{\substack{A\in \O(\Z(\C))\\ [I(\be):A]=0}}[I(X):A]\theta_A\dim(A).$$
Since all dimensions are positive real numbers we have
$$\sum_{\substack{A\in \O(\Z(\C))\\ [I(\be):A]\ne 0}}[I(X):A]\dim(A)=
\left|\sum_{\substack{A\in \O(\Z(\C))\\ [I(\be):A]\ne 0}}[I(X):A]\dim(A)\right|$$
$$=\left|\sum_{\substack{A\in \O(\Z(\C))\\ [I(\be):A]=0}}[I(X):A]\theta_A\dim(A)\right|\le 
\sum_{\substack{A\in \O(\Z(\C))\\ [I(\be):A]=0}}[I(X):A]\dim(A).$$
Therefore
$$\sum_{\substack{A\in \O(\Z(\C))\\ [I(\be):A]\ne 0}}[I(X):A]\dim(A)\le \frac12
\sum_{A\in \O(\Z(\C))}[I(X):A]\dim(A)=\frac12\dim(I(X)).$$
Recall that $\dim(I(X))=\dim(X)\dim(\C)$, see \eqref{dimI}. Hence
multiplying the inequality above by $\dim(X)$ and summing over $X\in \O(\C)\setminus \{ \be \}$ we get
$$\sum_{\be \ne X\in \O(\C)}\sum_{\substack{A\in \O(\Z(\C))\\ [I(\be):A]\ne 0}}[I(X):A]\dim(X)\dim(A)\le \sum_{\be \ne X\in \O(\C)}\frac12\dim(X)^2\dim(\C).$$
Observe that 
$$\sum_{\be \ne X\in \O(\C)}[I(X):A]\dim(X)=\sum_{\be \ne X\in \O(\C)}[F(A):X]\dim(X)=
\dim(A)-[F(A):\be],$$
and $$\sum_{\be \ne X\in \O(\C)}\dim(X)^2=\dim(\C)-1.$$ Therefore we get
\begin{equation}\label{pochti}
\sum_{\substack{A\in \O(\Z(\C))\\ [F(A):\be]\ne 0}}(\dim(A)-[F(A):\be])\dim(A)\le 
\frac12(\dim(\C)-1)\dim(\C).
\end{equation}
Using \eqref{dimI} we compute
$$\sum_{\substack{A\in \O(\Z(\C))\\ [I(\be):A]\ne 0}}[F(A):\be]\dim(A)=
\sum_{\substack{A\in \O(\Z(\C))\\ [I(\be):A]\ne 0}}[I(\be):A]\dim(A)=\dim(I(\be))=\dim(\C).$$
Thus \eqref{pochti} is equivalent to
$$\left(\sum_{\substack{A\in \O(\Z(\C))\\ [I(\be):A]\ne 0}}\dim(A)^2\right)-\dim(\C)\le 
\frac12(\dim(\C)-1)\dim(\C),$$
or 
$$\sum_{\substack{A\in \O(\Z(\C))\\ [I(\be):A]\ne 0}}\dim(A)^2\le 
\frac12(\dim(\C)+1)\dim(\C).$$
By Theorem \ref{I1} this inequality is equivalent to \eqref{psunit}.
\end{proof} 

\section{Categorifiable simple based rings of rank 3}
\subsection{Based rings of rank 3} 
Let $k,l,m,n$ be integers satisfying the equality
\begin{equation} \label{klmn}
k^2+l^2=kn+lm+1.
\end{equation}
It is easy to check that a ring $K(k,l,m,n)$ with basis $1, X, Y$ and multiplication
\begin{equation}\label{frules}
X^2=1+mX+kY,\; Y^2=1+lX+nY,\; XY=YX=kX+lY
\end{equation}
is associative. In a case when the numbers $k,l,m,n$ are nonnegative the ring $K(k,l,m,n)$ is
a based ring.

Another based ring of rank 3 is the group algebra of the group $\BZ/3\BZ$ with basis given by the group elements which we denote as $K(\BZ/3\BZ)$.

We have the following 

\begin{proposition} \label{based3}
{\em (cf \cite[\S 3.1]{O3})} Any unital based ring of rank 3 is isomorphic to either $K(k,l,m,n)$ or 
$K(\BZ/3\BZ)$.
\end{proposition}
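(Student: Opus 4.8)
The plan is to use the duality involution to reduce to two cases, and then to run everything off a single structural identity---cyclic symmetry of the structure constants---supplemented by associativity and Frobenius--Perron positivity. Write the basis as $1=b_0,b_1,b_2$ with structure constants defined by $b_ib_j=\sum_k N_{ij}^k b_k$. Since $N_{00}^0=1=\delta_{0,0^*}$, the involution fixes $1$, so on $\{b_1,b_2\}$ it either fixes both elements (the self-dual case) or interchanges them (the dual case). The tool I would set up first is the linear functional $\tau\colon K\to\BZ$ extracting the coefficient of $1$: the duality axiom reads $\tau(b_ib_j)=\delta_{ij^*}$, and expanding $\tau(b_ab_bb_c)$ by associativity in the two obvious ways gives $N_{ab}^{c^*}=\tau(b_ab_bb_c)=N_{bc}^{a^*}$. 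Hence $T_{abc}:=N_{ab}^{c^*}$ is invariant under cyclic permutation of its indices. This identity, which uses only associativity and the duality axiom, does almost all of the bookkeeping.

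In the self-dual case set $X=b_1$, $Y=b_2$, so that every index equals its dual and $T_{abc}=N_{ab}^c$ is cyclically symmetric. Writing $m=N_{XX}^X$, $k=N_{XX}^Y$, $l=N_{YY}^X$, $n=N_{YY}^Y$, cyclic symmetry forces $N_{XY}^X=N_{YX}^X=N_{XX}^Y=k$ and $N_{XY}^Y=N_{YX}^Y=N_{YY}^X=l$, while the duality axiom dictates the constant terms $N_{XX}^0=N_{YY}^0=1$ and $N_{XY}^0=N_{YX}^0=0$. This is precisely the multiplication rule \eqref{frules}, and in particular $XY=YX$ comes for free. It then remains to extract \eqref{klmn}: comparing the $Y$-coefficients of $X(XY)$ and $(XX)Y$ yields exactly $k^2+l^2=kn+lm+1$, and every remaining associativity identity is automatic once $K(k,l,m,n)$ is known (as recalled in the statement) to be associative for such $k,l,m,n$. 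Non-negativity of the structure constants gives $k,l,m,n\ge 0$, so $K\cong K(k,l,m,n)$.

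In the dual case set $g=b_1$, $h=b_2=g^*$. The axiom gives $N_{gg}^0=N_{hh}^0=0$ and $N_{gh}^0=N_{hg}^0=1$, and cyclic symmetry collapses the rest to a handful of equal constants: one finds $N_{gh}^h=N_{hg}^h=N_{gg}^g$ and $N_{gh}^g=N_{hg}^g=N_{hh}^h$, so again $gh=hg$. To finish I would invoke the Frobenius--Perron homomorphism, with $\FPdim(g)=\FPdim(h)=:d\ge 1$ since $g$ and $h$ are dual. Evaluating $\FPdim$ on $g^2$, $h^2$ and $gh$ produces a relation of the form $(N_{gg}^h-N_{gh}^g)\,d=1$; as $d\ge 1$ and the coefficients are integers, this forces $d=1$, and non-negativity then pins down every constant, giving $g^2=h$, $h^2=g$, $gh=hg=1$. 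This is the group ring $K(\BZ/3\BZ)$.

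The main obstacle is the dual case: cyclic symmetry by itself leaves a one-parameter family of candidate rings, and it is the interplay of Frobenius--Perron positivity ($d\ge 1$) with the integrality and non-negativity of the structure constants that eliminates everything except the group ring. A secondary point to check is that in the self-dual case the single relation $X(XY)=(XX)Y$ really carries all of associativity, which is guaranteed by the associativity of $K(k,l,m,n)$ under \eqref{klmn} asserted in the statement.
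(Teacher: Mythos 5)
Your proof is correct and follows essentially the same route as the paper: split according to whether the duality involution fixes or swaps the two nontrivial basis elements, derive \eqref{frules} and \eqref{klmn} in the self-dual case, and use Frobenius--Perron positivity to force invertibility in the non-self-dual case. The cyclic-symmetry identity $N_{ab}^{c^*}=N_{bc}^{a^*}$ is exactly the bookkeeping the paper leaves implicit in its one-line treatment of the self-dual case, and your $(N_{gg}^{h}-N_{gh}^{g})d=1$ step is a minor rephrasing of the paper's observation that $\FPdim(X)$ is an integer dividing $1$.
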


\begin{proof} In the case when any element of the based ring is self-dual \eqref{frules} is a general
form of fusion rules and \eqref{klmn} is equivalent to the associativity of multiplication \eqref{frules}.

Assume that not every element of the based ring is self-dual. Then we have a basis $1,X,Y$ with
$X^*=Y$ whence $\FPdim(X)=\FPdim(Y)$. The element $X^2$ is an integral linear combination
of $X$ and $Y$, hence $\FPdim(X)\in \BZ$. The element $XY$ is 1 plus an integral linear combination of $X$ and $Y$, so $\FPdim(X)$ is a divisor of 1. Thus $\FPdim(X)=\FPdim(Y)=1$
and the Proposition is proved.
\end{proof}

\begin{remark} \label{bas}
(i) Notice that the interchange $X\leftrightarrow Y$ induces a based ring isomorphism
$K(k,l,m,n)\simeq K(l,k,n,m)$. Thus we will often assume that $l\le k$.

(ii) The diophantine equation \eqref{klmn} has infinitely many solutions. Clearly, for any such solution $k$ and $l$ are coprime. The number of (nonnegative) solutions
with $0<l\le k\le x$ is about $Cx^2\ln(x)$ for a constant $C\approx \frac13$ (notice that there are
infinitely many solutions with $k=1$, $l=0$).
\end{remark} 

The main result of this section is the following

\begin{theorem} \label{simple3}
Let $\C$ be a pivotal fusion category of rank 3. Then $K(\C)$ is isomorphic to one of the following:

$K(1,0,m,0)$ for some nonnegative integer $m$;

$K(1,1,1,0)$;

$K(\BZ/3\BZ)$. 
\end{theorem}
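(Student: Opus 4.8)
The plan is to run every based ring produced by Proposition \ref{based3} through the numerical tests built from formal codegrees. Since $K(\BZ/3\BZ)$ is already on the list (it is categorified by the pointed category), Proposition \ref{based3} lets me assume all three simple objects are self-dual, so that $K(\C)=K(k,l,m,n)$ with $k,l,m,n\ge 0$ satisfying \eqref{klmn}; by Remark \ref{bas}(i) I may take $l\le k$. Because every object is self-dual we automatically have $\dim(X)=\dim(X^*)$, so the pivotal structure is spherical (in the sense of the definition recalled above) and all of Theorem \ref{zero}, Theorem \ref{I1} and Corollary \ref{fcdivis} are available. As $K(k,l,m,n)$ is commutative its three irreducible representations are characters $\phi_1,\phi_2,\phi_3$, with $\phi_1$ the Frobenius--Perron character; the associated formal codegrees are $f_i=1+\phi_i(X)^2+\phi_i(Y)^2$, and $f_1=\FPdim(\C)$ is the largest among them.

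The first real step is to reduce to the pseudo-unitary case. The spherical dimensions are multiplicative, hence realize one of the characters, say $\dim=\phi_j$, so that the global dimension equals the $j$-th codegree: $\dim(\C)=1+\phi_j(X)^2+\phi_j(Y)^2=f_j$. By Corollary \ref{fcdivis} every codegree divides $\dim(\C)=f_j$; in particular the largest codegree $f_1$ divides $f_j\le f_1$, which forces $f_j=f_1=\FPdim(\C)$. Hence $\dim(\C)=\FPdim(\C)$, the category $\C$ is pseudo-unitary, and the inequality \eqref{psunit} applies to the (intrinsic) codegrees of $K(k,l,m,n)$.

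With this in hand the endgame is arithmetic. The three codegrees satisfy $f_1\ge f_2\ge f_3\ge 1$ together with $\sum_i 1/f_i=1$ (Proposition \ref{trRinv}), the divisibility $f_i\mid f_1$ (Corollary \ref{fcdivis}), and $\sum_i 1/f_i^2\le \tfrac12(1+1/f_1)$ from \eqref{psunit}. These relations are extremely rigid: the inequality combined with $\sum_i 1/f_i=1$ pins the two smaller codegrees $f_2,f_3$ to a short list of small values, and expressing $f_i=1+\phi_i(X)^2+\phi_i(Y)^2$ back in terms of $(k,l,m,n)$ through the characteristic polynomials of multiplication by $X$ and $Y$ then confines $(k,l,m,n)$ to finitely many possibilities. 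I would finish by feeding the survivors into the cyclotomic test and the d-number test (Proposition \ref{dnumt}), which should eliminate everything except the families $K(1,0,m,0)$ and $K(1,1,1,0)$.

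I expect two places to be the main obstacle. First, in the reduction step the codegrees need not be rational: when $\phi_2,\phi_3$ are Galois conjugate the relation ``$f_1\mid f_j$ and $f_1\ge f_j$'' no longer instantly yields $f_1=f_j$, since $f_j/f_1$ is an algebraic integer that is at most $1$ under our fixed embedding but may have large conjugates, so one must control its norm (or invoke the cyclotomic test on $\phi_j$) to still conclude pseudo-unitarity. Second, the Diophantine analysis of \eqref{klmn} is delicate precisely because, as noted in Remark \ref{bas}(ii), the equation has infinitely many solutions, including the entire family with $l=0$; the work is to show that the codegree constraints are compatible with arbitrarily large parameters only along $K(1,0,m,0)$ and collapse every other branch to the finite exceptional set containing $K(1,1,1,0)$.
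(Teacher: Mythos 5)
Your outline follows the same broad strategy as the paper (reduce to pseudo-unitarity, then exploit formal codegrees and the inequality \eqref{psunit}), but three of your steps have genuine gaps, and two of them would not survive repair in the form you propose. First, the reduction to pseudo-unitarity: your argument ``$f_1$ divides $f_j\le f_1$, hence $f_j=f_1$'' does not work, and you correctly sense this. Divisibility of algebraic integers plus an inequality at a single embedding proves nothing: already in rank 2 (Yang--Lee), the ratio of the two codegrees $\frac{5-\sqrt5}{2}\big/\frac{5+\sqrt5}{2}=\frac{3-\sqrt5}{2}$ is a unit lying strictly between $0$ and $1$. Worse, the conclusion you are aiming for is false: the Galois conjugates of the category in Theorem \ref{uraa}(ii) are pivotal rank-3 categories that are \emph{not} pseudo-unitary, so no argument can show $\dim(\C)=\FPdim(\C)$ for every pivotal rank-3 $\C$. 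The paper's Lemma \ref{pivot} proves only that \emph{some Galois conjugate} of $\C$ is pseudo-unitary (which suffices, since conjugation preserves $K(\C)$), and it does so by a different mechanism: among the three characters of the commutative ring $K(\C)$, if the categorical-dimension character and the Frobenius--Perron character are not in the same Galois orbit then one of them is Galois-fixed, hence integer-valued, and one quotes \cite[Proposition 8.24]{ENO} (integral $\FPdim$) or \cite[Lemma A.1]{RS} (integral categorical dimensions). You should replace your divisibility step by this orbit argument.

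Second, the endgame is substantially harder than ``the inequality pins $f_2,f_3$ to a short list.'' The constraints $\sum_i 1/f_i=1$, $f_i\ge 1$, and \eqref{psunit} alone eliminate almost nothing (a power-mean estimate shows they are consistent for every value of the largest codegree), and the family $K(1,l,0,l^2)$ and the infinite family with $2l^2-k^2+2=0$ show that the locus $c-4b\le 4$ contains infinitely many based rings that must be handled by hand. The paper's actual argument needs the integrality of $b=\Tr(R)$ and $c=\det(R)$, the localization $1<f_1<2<f_2<b-5<f_3<b$ of the roots of $p(t)=t^3-bt^2+ct-c$ in the generic regime $b\ge 50$, $c-4b>4$ (Proposition \ref{generic3}), and crucially the identity \eqref{magic}, $p(2)=(kl-mn)^2+(k^2+kn-l^2-ml)^2-1$, to convert $c-4b\le 4$ into a tractable Diophantine problem (Proposition \ref{small3}); the regime $b<50$ is a separate finite enumeration. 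Third, your final sieve fails on the one ring it most needs to kill: $K(2,1,2,1)$ passes \emph{both} the cyclotomic and the d-number tests. The paper eliminates it in Lemma \ref{2121} by computing the decomposition of $I(\be)$ via Theorem \ref{I1} and showing that $\Tr(\theta_{I(Y)})$ cannot vanish, contradicting Theorem \ref{zero}. Without some such categorical input beyond the two arithmetic tests, your plan leaves $K(2,1,2,1)$ on the list and the theorem unproved.
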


By Proposition \ref{based3} we can and will assume that $K(\C)=K(k,l,m,n)$ in the proof of Theorem 
\ref{simple3}. 

{\em Proof of Theorem \ref{simple3}.} 
Combine Lemma \ref{pivot}, Propositions \ref{generic3} and
\ref{small3} with Lemmas \ref{b50} and \ref{2121} below. $\square$

\subsection{Notations} \label{notations}
 The following notations will be used throughout the paper. We consider an element
$1+X^2+Y^2=3+(l+m)X+(k+n)Y\in K(k,l,m,n)$ and the operator $R: K(k,l,m,n)\to K(k,l,m,n)$ 
of left multiplication by this element. We have explicitly
\begin{equation} \label{matrix}
R=\left(\begin{array}{ccc}
3&l+m&k+n\\
l+m&3+(l+m)m+(k+n)k&(l+m)k+(k+n)l\\
k+n&(l+m)k+(k+n)l&3+(l+m)l+(k+n)n
\end{array}\right)
\end{equation}

In what follows $p(t)=\det(t-R)$ denote the characteristic polynomial of $R$. The roots of the polynomial $p(t)$ are real positive (since the matrix above is symmetric positive definite); we will
denote the roots by $f_1, f_2, f_3$ assuming that $f_1\le f_2\le f_3$. It is clear that the numbers
$f_1, f_2, f_3$ are precisely the formal codegrees of $K(k,l,m,n)$. Thus Proposition \ref{trRinv} implies that
$\frac1{f_1}+\frac1{f_2}+\frac1{f_3}=1$, or, equivalently, $p(t)=t^3-bt^2+ct-c$ for some positive
integers $b=\Tr(R)$ and $c=\det(R)$. Using \eqref{matrix} we have
\begin{equation} \label{beq}
b=(k+n)^2+(l+m)^2+9
\end{equation}
The formula for $c$ is more complicated, see \eqref{magic} below.

\subsection{Pivotal categories of rank 3 are pseudo-unitary}
\begin{lemma} \label{pivot}
Let $\C$ be a pivotal fusion category of rank 3. Then some Galois conjugate of $\C$ is pseudo-unitary.
\end{lemma}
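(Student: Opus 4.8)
The plan is to reduce pseudo-unitarity to the numerical criterion of \cite[Proposition 8.23]{ENO}, namely that a fusion category is pseudo-unitary exactly when $\dim(\C)=\FPdim(\C)$, and then to exhibit a Galois conjugate on which these two dimensions agree. First I would dispose of the pointed case: if $K(\C)\simeq K(\BZ/3\BZ)$ then every simple object is invertible, so $\C$ is pointed and carries the canonical spherical structure with all dimensions equal to $1$; such a category is already pseudo-unitary. Hence by Proposition \ref{based3} I may assume $K(\C)=K(k,l,m,n)$, where all simple objects are self-dual.

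Under a Galois automorphism $\sigma$ the Grothendieck ring is preserved, so $\FPdim(\C^\sigma)=\FPdim(\C)$, whereas $\dim(\C^\sigma)=\sigma(\dim(\C))$. Thus it suffices to prove that $\FPdim(\C)$ is a Galois conjugate of $\dim(\C)$, for then choosing $\sigma$ with $\sigma(\dim(\C))=\FPdim(\C)$ produces a pseudo-unitary $\C^\sigma$. Now the pivotal structure makes $X\mapsto\dim(X)$ a character of $K(\C)$, and $\dim(\C)=\sum_i\dim(b_i)\dim(b_{i^*})$ is precisely the formal codegree $f_{\dim}$ of this character; on the other hand $\FPdim(\C)$ is the formal codegree of the Frobenius--Perron character, i.e.\ the largest eigenvalue $f_3$ of the nonnegative symmetric matrix $R$ of \eqref{matrix}. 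So $\dim(\C)$ and $\FPdim(\C)$ are both roots of the integer polynomial $p(t)$, and I must show they share a Galois orbit. Two cases are immediate. If $p(t)$ is irreducible over $\BQ$, all three codegrees are conjugate and we are done. If $p(t)$ splits into linear factors over $\BQ$, then $f_1,f_2,f_3\in\BZ$; since $f_{\dim}\le f_3$ and, by Corollary \ref{fcdivis}, $\dim(\C)/\FPdim(\C)=f_{\dim}/f_3$ is an algebraic integer, this rational number in $(0,1]$ equals $1$, so $\C$ is pseudo-unitary outright.

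The remaining, and genuinely delicate, case is $p(t)=(t-r)q(t)$ with $r\in\BZ$ and $q$ an irreducible quadratic: the two irrational roots are then automatically conjugate to each other, so the only way the claim can fail is if $\dim(\C)$ is the rational root $r$ while $\FPdim(\C)$ is an irrational root (no Galois conjugate can move the rational $\dim(\C)$). Excluding this configuration is the heart of the proof. My main approach would be to pass to the Drinfeld center of the sphericalization $\tilde\C$: the category $\Z(\tilde\C)$ is modular, and for a modular category the Frobenius--Perron dimension is always a Galois conjugate of the categorical dimension by the Galois symmetry of the modular $S$-matrix. Since $\dim(\Z(\tilde\C))=\dim(\tilde\C)^2$ and $\FPdim(\Z(\tilde\C))=\FPdim(\tilde\C)^2$, with both global dimensions totally positive, descending through the squares and the factor $2$ relating $\tilde\C$ to $\C$ yields that $\FPdim(\C)$ is a Galois conjugate of $\dim(\C)$, contradicting the bad configuration and settling all cases at once. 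I expect the modular Galois symmetry to be the step requiring the most care to cite or reprove; as an internal alternative that avoids the center, one can instead try to rule out the mixed configuration directly from the Diophantine shape of \eqref{beq} subject to \eqref{klmn}, using that $b-9$ must be a sum of two squares, though I anticipate that this bookkeeping, rather than the conceptual setup, is where the real difficulty lies.
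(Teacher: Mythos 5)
Your overall strategy---reduce to showing that $\FPdim(\C)$ is a Galois conjugate of $\dim(\C)$---is exactly the paper's, and your treatment of the pointed case, the irreducible case, and the totally split case of $p(t)$ is correct (for the split case the paper would simply invoke \cite[Proposition 8.24]{ENO}, but your argument via Corollary \ref{fcdivis} works). The gap is in the quadratic case, which you yourself identify as the heart of the matter. The claim you lean on there---that in any modular tensor category the Frobenius--Perron dimension is a Galois conjugate of the categorical dimension---is false. Take $\D=\F\boxtimes\F^{\sigma}$, where $\F$ is the Fibonacci modular category and $\F^{\sigma}$ its Galois conjugate (the Yang--Lee category): then $\dim(\D)=\frac{5+\sqrt5}{2}\cdot\frac{5-\sqrt5}{2}=5$, while $\FPdim(\D)=\left(\frac{5+\sqrt5}{2}\right)^2=\frac{15+5\sqrt5}{2}$ is irrational, so no Galois automorphism carries $\dim(\D)$ to $\FPdim(\D)$. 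Restricting the claim to Drinfeld centers does not rescue it: since $\dim(\Z(\tilde\C))=\dim(\tilde\C)^2$ and $\FPdim(\Z(\tilde\C))=\FPdim(\tilde\C)^2$, the assertion for $\Z(\tilde\C)$ is essentially equivalent to the conclusion of the lemma you are trying to prove, so the argument is circular. Your fallback Diophantine route is not carried out, so the bad configuration ($\dim(\C)$ equal to the rational root $r$ while $\FPdim(\C)$ is irrational) remains open in your write-up.

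The paper closes exactly this configuration with an external input you are missing: \cite[Lemma A.1]{RS} (Hong--Rowell), which asserts that a pivotal fusion category whose categorical dimensions are rational integers is pseudo-unitary. In your quadratic case the dimension character is Galois-stable, hence integer-valued on $K(\C)$, and that lemma applies directly and finishes the proof. (The paper's actual argument is also leaner and avoids the factorization of $p(t)$ altogether: $K(\C)$ has only three characters, so if the dimension and Frobenius--Perron characters lie in different Galois orbits, one of those two orbits must be a singleton; a Galois-stable Frobenius--Perron character makes $\C$ integral, handled by \cite[Proposition 8.24]{ENO}, and a Galois-stable dimension character is handled by \cite[Lemma A.1]{RS}.)
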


\begin{proof} Since $K(\C)$ is commutative semisimple ring there are 3 distinct homomorphisms
$\phi_i: K(\C)\to \BC, i=1,2,3$. One of this homomorphisms is the categorical dimension and 
another one is the Frobenius-Perron dimension. We would like to show that these two homomorphisms are in the same orbit under the action of the Galois group. For a sake of contradiction assume that this is not the case. Then the orbit of one of them has size 1, that is
one of these homomorphisms lands into $\BZ$. If the Frobenius-Perron dimension lands in $\BZ$
then the category $\C$ is integral and we are done by \cite[Proposition 8.24]{ENO}. If the categorical dimension lands in $\BZ$ the category $\C$ is pseudo-unitary by \cite[Lemma A.1]{RS}.
\end{proof}

\subsection{The generic case}
The pseudo-unitary inequality \eqref{psunit} applied to $K(k,l,m,n)$ says:

$$1-\frac{4b}c\le \frac1{f_3}.$$

If $c-4b>0$ this is equivalent to
\begin{equation} \label{psu3}
f_3\le \frac{c}{c-4b}.
\end{equation}

\begin{proposition} \label{generic3}
Assume that $c-4b>4$ and $b\ge 50$. Then there is no pseudo-unitary
fusion category $\C$ such that $K(\C)=K(k,l,m,n)$.
\end{proposition}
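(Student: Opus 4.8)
The plan is to derive a contradiction by showing that under the hypotheses $c-4b>4$ and $b\ge 50$, the pseudo-unitary bound \eqref{psu3} on the largest formal codegree $f_3$ is incompatible with a direct lower bound on $f_3$ coming from the matrix \eqref{matrix}. By Lemma \ref{pivot} we may pass to a Galois conjugate and assume $\C$ is pseudo-unitary, so that \eqref{psunit}, hence \eqref{psu3}, applies (the hypothesis $c-4b>0$ is what makes \eqref{psu3} a genuine upper bound). Thus I have a clean two-sided squeeze to exploit: the upper bound $f_3\le \frac{c}{c-4b}$ against a lower bound for the top eigenvalue of the symmetric positive-definite matrix $R$.

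The first step is to produce a good \emph{lower} bound for $f_3$. Since $f_3$ is the largest eigenvalue of the symmetric matrix $R$, I can bound it below by a Rayleigh quotient $f_3\ge \frac{v^{T}Rv}{v^{T}v}$ for a cleverly chosen test vector $v$, or more crudely by the largest diagonal entry, or by the trace/power-sum relations. Using \eqref{beq} we know $b=\Tr(R)=(k+n)^2+(l+m)^2+9$, and since the two off-diagonal-coupled lower $2\times 2$ block of \eqref{matrix} carries entries of size comparable to $(k+n)^2$ and $(l+m)^2$, the top eigenvalue $f_3$ should be of order $b$ itself. Concretely I expect to show $f_3\ge b-C$ for a small absolute constant, or at least $f_3$ grows linearly in $b$; combined with \eqref{psu3} this forces $\frac{c}{c-4b}\ge f_3\gtrsim b$, i.e. $c\ge b(c-4b)$, which rearranges to $c(b-1)\le 4b^2$, an inequality that fails once $b$ is large relative to $c$. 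The delicate point is that $c$ and $b$ are linked, so I must control the ratio $c/b$.

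The second ingredient is therefore the explicit formula for $c=\det(R)$ (the forthcoming \eqref{magic}) together with the constraint \eqref{klmn}. The strategy here is to show that the hypothesis $c-4b>4$ already pins down the regime: rewriting \eqref{psu3} as $f_3(c-4b)\le c$ and substituting the lower bound $f_3\ge$ (something of order $b$), I get $b(c-4b)\lesssim c$, so $bc-4b^2\lesssim c$, giving $c(b-1)\lesssim 4b^2$ and hence $c\lesssim 4b$ for large $b$. But $c-4b>4$ says $c>4b+4$, and I must check these cannot both hold — the contradiction should emerge precisely from the numerical threshold $b\ge 50$, which is presumably where the absolute constant $C$ in the lower bound $f_3\ge b-C$ and the slack in $c-4b>4$ (rather than merely $>0$) combine to close the gap.

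The main obstacle, I expect, is establishing a \emph{sharp enough} lower bound on $f_3$ in terms of $b$ with an explicit, small error term, since a loose bound will not survive the competition against $\frac{c}{c-4b}$ when $c$ is only moderately larger than $4b$. The cleanest route is probably to combine $f_1+f_2+f_3=b$ and $f_1f_2+f_1f_3+f_2f_3=c$ with $f_1f_2f_3=c$ (all read off from $p(t)=t^3-bt^2+ct-c$): from $f_1f_2f_3=c$ and $f_1f_2+f_1f_3+f_2f_3=c$ one gets relations among the $f_i$ that bound $f_3$ from below, for instance using $f_1,f_2\ge 1$ (valid by Remark \ref{fctotp}, since $f_E\ge 1$). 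Indeed, $f_1\ge 1$ and $f_2\ge 1$ give $f_3=b-f_1-f_2\le b-2$, which is an upper bound, but the product relation $f_1f_2f_3=f_1f_2+f_1f_3+f_2f_3$ forces $f_3$ to be large whenever $f_1,f_2$ are close to $1$; quantifying this to get $f_3\ge b-2$ or $f_3\ge \tfrac{c}{f_1f_2}$ and feeding it into \eqref{psu3} is the crux of the argument.
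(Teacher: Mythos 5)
Your high-level strategy --- squeeze $f_3$ between the pseudo-unitary upper bound \eqref{psu3} and a lower bound of order $b$ --- is exactly the paper's, and your observation that everything hinges on a sharp lower bound for $f_3$ is correct. But the proposal has a genuine gap precisely at that point: none of the tools you name (Rayleigh quotients, the relations $f_1+f_2+f_3=b$, $f_1f_2f_3=f_1f_2+f_1f_3+f_2f_3=c$, and $f_E\ge 1$) can by themselves produce a bound of the form $f_3\ge b-C$ with $C$ small. Indeed the paper's own lemma shows $f_1>1$ and $f_2>2$, so $f_3<b-3$ always; in particular your floated bound $f_3\ge b-2$ is never true, and without an a priori upper bound on $c$ the pair $(f_1,f_2)$ can have a large sum (e.g.\ $f_1$ near $1$ forces $f_2$ large through $\tfrac1{f_1}+\tfrac1{f_2}+\tfrac1{f_3}=1$), so no purely algebraic or spectral argument pins $f_3$ above $b-5$.

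The missing idea is a two-pass bootstrap. First apply \eqref{psu3} with only the trivial Vieta bound $f_3\ge b/3>9$ (valid since $b\ge 50$); this yields $c<\tfrac92 b$. Only with this upper bound on $c$ in hand can one verify the sign computation $p(b-5)=(b-5)\bigl(c-\tfrac92 b+\tfrac12(50-b)\bigr)-c<0$ (using $b\ge 50$), which together with $p(b)>0$ locates $f_3$ in $(b-5,b)$. Then \eqref{psu3} is applied a second time with $c-4b\ge 5$: from $b-5<f_3\le c/(c-4b)\le c/5$ one gets $c>5b-25$, which contradicts $c<\tfrac92 b$ exactly when $b\ge 50$. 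Your envisaged single-pass derivation (ending in ``$c\lesssim 4b$ versus $c>4b+4$'') would work once $f_3>b-5$ is known, but as written the proposal never supplies the ingredient ($c<\tfrac92 b$) needed to prove that lower bound, so the argument does not close. (A small additional remark: the proposition already hypothesizes pseudo-unitarity, so the appeal to Lemma \ref{pivot} is unnecessary, though harmless.)
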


\begin{proof} 
The Vieta theorem implies that $f_3\ge \frac13b>9$. Under the assumptions of the Proposition
$c>4b$, so \eqref{psu3} applies. We get
$$\frac{c}{c-4b}>9\Leftrightarrow c<\frac92b.$$

\begin{lemma} Under the assumptions of the Proposition, we have
$$1<f_1<2<f_2<b-5<f_3<b.$$
\end{lemma}

\begin{proof} We have $p(1)=1-b<0$, $p(2)=8+c-4b>0$ and $p(b)=bc-c>0$. Also
$$p(b-5)=(b-5)((b-5)^2-b(b-5)+c)-c=(b-5)(c-\frac92b+\frac12(50-b))-c<0.$$
The Lemma follows.
\end{proof}

By the assumptions of the Proposition we have $c-4b\ge 5$. Thus
$$\frac{c}5\ge \frac{c}{c-4b}\ge f_3>b-5,$$
whence
$$c>5b-25\Leftrightarrow (c-\frac92b)+\frac12(50-b)>0.$$
We get a contradiction with inequalities $b\ge 50$ and $c<\frac92b$ which proves the Proposition.
\end{proof}
\subsection{Small cases}
In this section we consider the cases not covered by Proposition \ref{generic3}. Thus either $b<50$
or $c-4b\le 4$, or both.

\subsubsection{The case $b<50$}
\begin{lemma} \label{b50}
Let $\C$ be a fusion category such that $K(\C)=K(k,l,m,n)$ with $b<50$.
Then $K(\C)$ is isomorphic to one of the following: $K(1,0,m,0)$ with $m\le 6$, $K(1,1,1,0)$,
$K(2,1,2,1)$.
\end{lemma}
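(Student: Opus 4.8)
The plan is to convert $b<50$ into a finite search, enumerate the admissible based rings, and discard the non-categorifiable ones with the cyclotomic test. By \eqref{beq} the condition $b<50$ is the same as $(k+n)^2+(l+m)^2\le 40$, which forces $k\le k+n\le 6$ and $l\le l+m\le 6$. Normalizing by the symmetry $K(k,l,m,n)\simeq K(l,k,n,m)$ of Remark \ref{bas} I take $l\le k$, and I use that \eqref{klmn} forces $\gcd(k,l)=1$ and $k\ge 1$. I would organize the search by $l$. The case $l=0$ collapses \eqref{klmn} to $k(k-n)=1$, hence $k=1$, $n=0$, with $m\le 6$ free, giving the family $K(1,0,m,0)$, $m\le 6$. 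The case $l=1$ gives $m=k(k-n)$ with $n\le k$, and the bound leaves exactly $K(1,1,1,0)\simeq K(1,1,0,1)$, $K(2,1,2,1)$, $K(2,1,4,0)$, $K(2,1,0,2)$ and $K(3,1,0,3)$. For $l\ge 4$, coprimality forces $k>l$, so $(k+n)^2+(l+m)^2\ge 25+16>40$ and there is nothing; the two values $l=2,3$ are excluded by a direct check against \eqref{klmn} and the bound. This produces the complete finite list.

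The three types $K(1,0,m,0)$ ($m\le 6$), $K(1,1,1,0)$ and $K(2,1,2,1)$ are precisely the conclusion, so the whole content is to eliminate $K(2,1,0,2)$, $K(2,1,4,0)$ and $K(3,1,0,3)$. For this I would apply the cyclotomic test to the three characters $\phi\colon K(\C)\to\BC$: concretely $\phi(X)$ ranges over the eigenvalues of left multiplication by $X$, whose characteristic polynomial a direct computation gives as
$$q_X(t)=t^3-(l+m)t^2+(ml-k^2-1)t+l.$$
For the three candidates (all having $l=1$) this specializes to $t^3-t^2-5t+1$, $t^3-5t^2-t+1$ and $t^3-t^2-10t+1$; none has a rational root, so each is irreducible, and their discriminants are $592$, $592$ and $4257$, none of which is a perfect square. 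Hence in every case $\phi(X)$ generates a non-normal cubic field with Galois group $S_3$, which is non-abelian and therefore lies in no cyclotomic field, contradicting the cyclotomic test. Thus none of these rings is categorifiable, and the list reduces to the three stated types.

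The subtle point — the one I expect to be the real trap, rather than the routine bookkeeping of the enumeration and the discriminant computations — is the choice of obstruction. The d-number test (Proposition \ref{dnumt}) would also kill all three candidates, but it is strictly too strong: the formal codegrees of $K(1,0,m,0)$ are $2$ and $\frac{1}{2}s(s\pm m)$ with $s=\sqrt{m^2+8}$, and for $m=3,5$ these are not d-numbers, so that test would erroneously remove rings that must remain on the list. The cyclotomic test is at exactly the right granularity, because all surviving rings have abelian character fields: $q_X$ factors as $t(t^2-mt-2)$ for $K(1,0,m,0)$ (splitting field $\BQ(\sqrt{m^2+8})$), it is an irreducible cubic of square discriminant $49$ for $K(1,1,1,0)$ (a cyclic cubic), and it factors as $(t+1)(t^2-4t+1)$ for $K(2,1,2,1)$ (splitting field $\BQ(\sqrt 3)$). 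For completeness I note that $K(2,1,0,2)$ and $K(3,1,0,3)$, which have $c-4b>0$, could instead be excluded by the pseudo-unitary inequality \eqref{psu3}; but $K(2,1,4,0)$ has $c-4b=-4<0$, so for it \eqref{psu3} is vacuous and the cyclotomic argument is essential.
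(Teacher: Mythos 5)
Your proof is correct and follows essentially the same route as the paper: reduce $b<50$ to a finite enumeration via \eqref{beq} and \eqref{klmn} (the paper uses the equivalent but sharper form $b=3k^2+3l^2+m^2+n^2+7$, which gives $k\le 3$ immediately when $l>0$ and avoids your separate $l=2,3$ checks), and then kill the three surviving extra candidates $K(2,1,0,2)$, $K(2,1,4,0)$, $K(3,1,0,3)$ by the cyclotomic test; your formula for $q_X$, the irreducibility checks, and the discriminants $592$, $592$, $4257$ are all correct. One correction to your closing remark, though: the d-number test is a valid necessary condition for categorifiability, so the fact that $K(1,0,3,0)$ and $K(1,0,5,0)$ fail it (your computation of the codegrees $\tfrac12 s(s\pm m)$ is right) does not mean the test would ``erroneously remove rings that must remain on the list'' --- it means those two rings are simply not categorifiable, consistent with Theorem \ref{near}, and since the Lemma only asserts that $K(\C)$ belongs to the stated list, trimming the list further would still prove (indeed strengthen) it. So either test works here, and the paper in fact invokes both.
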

 
\begin{proof} Using \eqref{beq} and \eqref{klmn} we get
$$b=3k^2+3l^2+n^2+m^2+7.$$
Assuming $k\ge l>0$ we get from inequality $b<50$ that $k\le 3$. There are 11 based rings 
$K(k,l,m,n)$ with $3\ge k\ge l>0$ but out of those only $K(1,1,1,0)$ and $K(2,1,2,1)$ pass both
the cyclotomic and d-number tests.

If $l=0$ then $k=1$ since $k$ and $l$ are coprime. By \eqref{klmn} this implies that $n=0$ 
and $b=10+m^2$. The inequality $b<50$ gives $m\le 6$ and we are done.
\end{proof}

\subsubsection{The case $c-4b\le 4$} 
\begin{proposition} \label{small3}
Let $\C$ be a fusion category such that $K(\C)=K(k,l,m,n)$ with $c-4b\le 4$.
Then $K(\C)$ is isomorphic to one of the following: $K(1,0,m,0)$ with $m\in \BZ_{\ge 0}$, 
$K(1,1,1,0)$, $K(2,1,2,1)$.
\end{proposition}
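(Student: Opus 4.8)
The plan is to assume $K(\C)=K(k,l,m,n)$ with $l\le k$ (Remark \ref{bas}) and to reduce immediately to the range $b\ge 50$: the complementary range $b<50$ is settled by Lemma \ref{b50}, whose output is already contained in the list of Proposition \ref{small3}. Once $b\ge 50$ we have $f_3=\dim(\C)\ge\tfrac13 b>16$, and I would first convert the hypothesis $c-4b\le 4$ into a statement about the two smaller codegrees. Writing $s=f_1+f_2$ and using $\sum_i 1/f_i=1$ (Proposition \ref{trRinv}), which gives $f_1f_2=\tfrac{s f_3}{f_3-1}$, one finds
$$c-4b=s\,\frac{(f_3-2)^2}{f_3-1}-4f_3,$$
so $c-4b\le 4$ is equivalent to $f_1+f_2\le \tfrac{4(f_3^2-1)}{(f_3-2)^2}$. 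Since the right-hand side is decreasing in $f_3$ and $f_3>16$, this forces $f_1+f_2<5.2$, whence also $f_1f_2<6$. Thus the two non-dimensional codegrees are confined to a tiny region, with $f_1,f_2\in(1,4.2)$.

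Next I would split according to whether $p(t)=t^3-bt^2+ct-c$ is reducible over $\BQ$. First, $f_3\notin\BZ$: otherwise $s=b-f_3$ and $f_1f_2=c/f_3$ are integers, and $f_1f_2-s=\tfrac{s}{f_3-1}$ would force $(f_3-1)\mid s$, impossible since $0<s<5.2<f_3-1$. Hence $p$ has at most one rational root, and in the reducible case that root $r$ is a \emph{simple} rational root equal to $f_1$ or $f_2$, so $1<r<4.2$. Being the unique codegree of its value, $r$ is Galois-fixed, so the character $\phi_r$ is rational; as $\phi_r(X),\phi_r(Y)$ are roots of the monic integral cubics attached to \eqref{frules}, they are ordinary integers with $\phi_r(X)^2+\phi_r(Y)^2=r-1\in\{1,2\}$ (the value $3$ being excluded as a sum of two squares). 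Feeding the few integer pairs $(\phi_r(X),\phi_r(Y))$ into \eqref{frules}, I expect exactly two outcomes: either $l=0$ (which forces $k=1,\ n=0$, i.e. $K(1,0,m,0)$), or the family $K(l+1,l,l+1,l)$; for the latter a direct evaluation via \eqref{magic} gives $c-4b=(2l+1)^2-9$, so $c-4b\le 4$ forces $l\le 1$ and hence $b\le 27<50$, contradicting $b\ge 50$. So in the reducible case only $K(1,0,m,0)$ survives.

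The hard part is the irreducible case, where all three codegrees are Galois conjugate, with $f_1,f_2$ genuine cubic irrationalities in $(1,4.2)$ and $f_3>16$. Here I would invoke categorifiability. The d-number test (Proposition \ref{dnumt}) applied to $p(t)$ collapses to the single divisibility $c\mid b^3$, while the cyclotomic test forces the cubic field generated by the character values of \eqref{frules} to be a cyclic (totally real abelian) cubic field, so the discriminant of the associated cubic is a perfect square. Combining $c\mid b^3$ and this square-discriminant condition with the two-sided bound $2b-O(1)<c\le 4b+4$ (the lower estimate coming from $f_1f_3,f_2f_3>16$), and using \eqref{magic} together with \eqref{klmn} to write everything polynomially in $k,l$, should bound $k,l$ and leave only solutions with $b<50$. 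I expect this to be the main obstacle: the example $K(3,1,9,0)$, with $b=118$, $c=473$ and $c-4b=1\le 4$ yet $c\nmid b^3$, shows that the inequality alone is nowhere near sufficient, so the irreducible analysis must genuinely intertwine the codegree inequality with the arithmetic (d-number and cyclotomic) tests rather than use either in isolation.
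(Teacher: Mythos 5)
Your reduction to $b\ge 50$ via Lemma \ref{b50} is legitimate (that lemma is proved independently and its output is contained in the target list), and your treatment of the reducible case is correct and genuinely different from the paper's: the identity $c-4b=s\frac{(f_3-2)^2}{f_3-1}-4f_3$ with $s=f_1+f_2$, the exclusion of $f_3\in\BZ$, and the classification of the integral character with $\chi(X)^2+\chi(Y)^2\in\{1,2\}$ do pin down exactly $K(1,0,m,0)$ and $K(l+1,l,l+1,l)$ (up to the swap of Remark \ref{bas}), and the latter is killed by $(2l+1)^2\le 13$. The paper proceeds quite differently: it uses \eqref{magic} and \eqref{klmn} to rewrite $c-4b\le 4$ as $(kl-mn)^2+(2(k^2-lm)-1)^2\le 13$, deduces $k^2-lm\in\{1,2\}$, and settles everything by elementary casework on these two diophantine conditions, invoking the cyclotomic and d-number tests only to eliminate $K(1,2,0,4)$, $K(1,3,0,9)$ and one infinite family. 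Your codegree-localization argument is an attractive alternative for the reducible half.

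The genuine gap is the irreducible case, which you leave as a program rather than a proof, and the program as stated cannot succeed: the expectation that the constraints ``bound $k,l$ and leave only solutions with $b<50$'' is false. The solutions of $k^2-2l^2=2$ (namely $(k,l)=(2,1),(10,7),(58,41),\dots$) with $m=2l$, $n=k/2$ satisfy \eqref{klmn}, have $kl=mn$ and $k^2-lm=2$, hence $c-4b=0$ exactly by \eqref{magic}, with $b=27\frac{l^2+1}{2}$ unbounded and $p(t)$ irreducible for $b>27$; so your irreducible case with $b\ge 50$ contains an infinite family with arbitrarily large $k,l$ meeting all your inequality constraints. These rings are eliminated not by any bound but by the parity argument that $b$ is odd while $c=4b$, so $b^3/c=b^2/4\notin\BZ$ and Proposition \ref{dnumt} fails. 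Moreover, to know that the irreducible survivors of $c-4b\le 4$ consist precisely of this family together with the single ring $K(1,3,0,9)$ (your $K(3,1,9,0)$, killed by $473\nmid 118^3$), one must first carry out the casework that \eqref{magic} makes possible; the filter ``$c\mid b^3$ plus square discriminant plus $c\in(2b-O(1),\,4b+4]$'' can only be applied after that enumeration. So the decisive step of the proof is missing, not merely deferred, and the correct endgame requires handling an infinite family by a divisibility argument rather than a finite search.
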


\begin{proof}
We have the following formula verified by a direct calculation using \eqref{matrix}:
\begin{equation}\label{magic}
p(2)=8+c-4b=(kl-mn)^2+(k^2+kn-l^2-ml)^2-1.
\end{equation}
Hence the inequality $c-4b\le 4$ is equivalent to
\begin{equation} \label{magicineq}
(kl-mn)^2+(k^2+kn-l^2-ml)^2\le 13.
\end{equation}

Observe that in view of \eqref{klmn} the number $k^2+kn-l^2-lm=2(k^2-lm)-1$ is odd. By Remark
\ref{bas} (i) we can assume that this number is positive. Then \eqref{magicineq} implies that
$k^2-lm=1$ or $k^2-lm=2$. Now we consider these cases separately.

{\em Case $k^2-lm=1$.} In this case $l^2-kn=0$ by \eqref{klmn}. Since $k$ and $l$ are coprime
this implies $k=1, l^2=n, lm=0$. In the case $l=0$ we get the based ring $K(1,0,m,0)$. Otherwise,
$m=0$ and \eqref{magicineq} says
$$l^2+1\le 13\Leftrightarrow l\le 3.$$
The rings $K(1,3,0,9)$ (with $b=118, c=473$) and $K(1,2,0,4)$ (with $b=38, c=148$) do not pass neither cyclotomic nor d-number tests. Thus the only possibility is $K(1,1,0,1)\simeq K(1,1,1,0)$ and the conclusion of the Proposition holds in this case.

{\em Case $k^2-lm=2$.} In this case $l^2-kn=-1$ whence $m=\frac{k^2-2}l$ and $n=\frac{l^2+1}k$.
We compute $$kl-mn=\frac{2l^2-k^2+2}{kl}.$$ The inequality \eqref{magicineq} says
$(kl-mn)^2+9\le 13$, so we have the following subcases:

{\em Subcase $kl-mn=\pm 2$.} Equivalently, $2l^2-k^2+2=\pm 2kl$ implying $3l^2+2=(k\pm l)^2$
and we get a contradiction considering this equality modulo 3.

{\em Subcase $kl-mn=\pm 1$.} Equivalently, $2l^2-k^2+2=\pm kl$ implying $9l^2+8=(2k\pm l)^2$
and again we get a contradiction considering this equality modulo 3.

{\em Subcase $kl-mn=0$.} Equivalently, $2l^2-k^2+2=0$. This diophantine equation has infinitely many solutions. We compute $m=2l, n=\frac{k}2, b=27\frac{l^2+1}2$. Equation \eqref{magic} gives $c=4b$. The polynomial $p(t)=t^3-bt^2+4bt-4b$ satisfies $p(2)=8$. Hence if $\gamma$ is an integer root of $p(t)$ then $2-\gamma$ is a divisor of 8 and one verifies easily that $p(t)$ is irreducible for $b>27$. Observe that $b=27\frac{l^2+1}2$ is odd, so 
$\frac{b^3}c=\frac{b^2}4\not \in \BZ$. Thus for $b>27$ the d-number test is failed (one can also show that the cyclotomic test fails as well). Finally, $27\frac{l^2+1}2=b\le 27$ implies $l=1$ and
we get the based ring $K(2,1,2,1)$, so the conclusion of the Proposition holds.

The Proposition is proved.
\end{proof}

\begin{remark} The crucial role in the proof above is played by the identity \eqref{magic}. Thus it seems to be of great interest to find its counterparts for based rings of higher ranks.
\end{remark}

\subsection{The ring $K(2,1,2,1)$}
\begin{lemma} \label{2121}
There is no pseudo-unitary fusion category $\C$ with $K(\C)=K(2,1,2,1)$.
\end{lemma}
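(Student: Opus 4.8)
The plan is to reach a contradiction not from the summed inequality \eqref{psunit} itself (which turns out to have slack for this ring) but from the sharper per-object inequality that is proved inside its derivation, applied to the single object $Y$. First I would record the numerical data. Assuming $\C$ is pseudo-unitary, categorical dimensions equal Frobenius--Perron dimensions, so solving the fusion rules \eqref{frules} for $K(2,1,2,1)$ gives $\dim(X)=2+\sqrt3$ and $\dim(Y)=1+\sqrt3$, whence $\dim(\C)=1+\dim(X)^2+\dim(Y)^2=12+6\sqrt3$. For this ring $b=27$ and $c=108$ (Proposition \ref{small3}), so the formal codegrees are the roots of $p(t)=t^3-27t^2+108t-108=(t-3)(t^2-24t+36)$, namely $3$, $12-6\sqrt3$ and $12+6\sqrt3$.

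Next I would pin down the simple summands of $I(\be)$ in $\Z(\C)$ via Theorem \ref{I1}. Since $K(\C)$ is commutative of rank $3$, $I(\be)$ is multiplicity free with exactly three summands $A_E$, of dimensions $\dim(\C)/f_E$, that is $1$, $4+2\sqrt3$ and $7+4\sqrt3$; write the last two as $A_2$ and $A_3$. Because $[F(A):\be]=[I(\be):A]=1$ for each summand and $F$ preserves dimension, the classes $[F(A_2)],[F(A_3)]$ are uniquely determined among nonnegative combinations of $\be,X,Y$: one finds $F(A_2)=\be+X+Y$ and $F(A_3)=\be+2X+2Y$. Reading off multiplicities via $[I(Y):A]=[F(A):Y]$ then gives $[I(Y):\be]=0$, $[I(Y):A_2]=1$ and $[I(Y):A_3]=2$.

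The crux is the following. One checks that $\sum_E f_E^{-2}=\tfrac12\le\tfrac12\bigl(1+\dim(\C)^{-1}\bigr)$, so \eqref{psunit} is satisfied and cannot be quoted directly; instead I would use the intermediate per-object inequality. Applying Theorem \ref{zero} to $Y\ne\be$ gives $\sum_A[I(Y):A]\theta_A\dim(A)=0$, and $\theta_A=1$ for the three objects $A$ with $[I(\be):A]\ne0$. Separating those terms and using $|\theta_A|=1$ together with positivity of all dimensions yields
$$2\!\!\sum_{[I(\be):A]\ne0}\!\![I(Y):A]\dim(A)\ \le\ \dim(I(Y))=\dim(Y)\dim(\C).$$
Here the left-hand side equals $2\bigl((4+2\sqrt3)+2(7+4\sqrt3)\bigr)=36+20\sqrt3$, while the right-hand side is $(1+\sqrt3)(12+6\sqrt3)=30+18\sqrt3<36+20\sqrt3$, a contradiction, so no such $\C$ exists.

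I expect the main obstacle to be conceptual rather than computational: realizing that the summed inequality \eqref{psunit} has slack for $K(2,1,2,1)$ and that one must instead exploit the per-object step for the \emph{smaller}-dimensional generator $Y$. The asymmetry that makes this work is that $[F(A_i):X]=[F(A_i):Y]$ for $i=2,3$ while $\dim(Y)<\dim(X)$, so only $Y$ produces a violation. The rest—the two dimensions $2+\sqrt3,\,1+\sqrt3$, the codegrees from $p(t)$, and the forced restrictions $F(A_2),F(A_3)$—is routine bookkeeping in $\Z(\C)$.
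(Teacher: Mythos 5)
Your proof is correct and is essentially the paper's own argument: both determine $F(A)=\be\oplus X\oplus Y$ and $F(B)=\be\oplus 2X\oplus 2Y$ from Theorem \ref{I1}, then apply Theorem \ref{zero} to $I(Y)$ and use positivity of dimensions to see that the $\theta=1$ part of $I(Y)$ (of dimension $18+10\sqrt3$) outweighs the remainder (of dimension $12+8\sqrt3$), which is exactly your inequality $36+20\sqrt3>30+18\sqrt3$ restated. The only cosmetic difference is that the paper phrases the contradiction as $\Tr(\theta_{I(Y)})\ne 0$ rather than as a failure of the intermediate per-object inequality from the derivation of \eqref{psunit}.
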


\begin{proof} We have $p(t)=t^3-27t^2+108t-108$ in this case, so $f_1=12-6\sqrt{3}, f_2=3, f_3=12+6\sqrt{3}$. Using Theorem \ref{I1} we get that $I(\be)\in \Z(\C)$ decomposes into simple objects as $\be \oplus A \oplus B$ where $F(A)=\be \oplus X\oplus Y$ and 
$F(B)=\be \oplus 2X\oplus 2Y$. Using Proposition \ref{FI} we see that $I(Y)$ decomposes as 
$A+2B+S$ where $F(A+2B)=3+5X+5Y$ and $S$ is a (non simple) object with $F(S)=4X+4Y$. 
This implies that
$\Tr(\theta_{I(Y)})\ne 0$ and we have a contradiction with Theorem \ref{zero}.
\end{proof}

\begin{remark} We will show later (see Theorem \ref{quadra}) that a categorification of $K(2,1,2,1)$ admits a spherical (and hence pivotal) structure.
Thus by the Lemma above and Lemma \ref{pivot} the ring $K(2,1,2,1)$ is not categorifiable.
\end{remark}

\section{Non-simple based rings of rank 3} 
\subsection{}The main result of this section is the following

\begin{theorem} \label{near}
 Assume that the based ring $K(1,0,m,0)$ is categorifiable. Then $m\le 2$. 
\end{theorem}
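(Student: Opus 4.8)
The plan is to analyze the Grothendieck ring $K(1,0,m,0)$, whose fusion rules from \eqref{frules} read $X^2=1+mX$, $Y^2=1+Y$, $XY=YX=X$. The key observation is that $Y$ generates a based subring with $Y^2=1+Y$; this is the fusion ring of the golden-ratio category (even part of quantum $sl_2$ at $5$th root of unity), so $Y$ has Frobenius--Perron dimension $\FPdim(Y)=\frac{1+\sqrt5}2$, while $\FPdim(X)=\frac{m+\sqrt{m^2+4}}2$ from $X^2=1+mX$. Since $XY=X$ means the subcategory generated by $Y$ acts trivially on $X$ by fusion, the structure is quite rigid. The strategy is to compute the formal codegrees $f_1,f_2,f_3$ explicitly and then bring to bear the full arsenal of necessary conditions established earlier in the excerpt: the cyclotomic test, the d-number test (Proposition \ref{dnumt}), the divisibility Corollary \ref{fcdivis}, and crucially Theorem \ref{zero} together with Theorem \ref{I1} governing the decomposition of $I(\be)$.

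\emph{First} I would write down the matrix $R$ of \eqref{matrix} specialized to $(k,l,m,n)=(1,0,m,0)$ and extract its characteristic polynomial $p(t)=t^3-bt^2+ct-c$. From \eqref{beq} we get $b=m^2+10$, and one computes $c=\det(R)$ as a polynomial in $m$. The three formal codegrees $f_1\le f_2\le f_3$ are the roots. Since $K(1,0,m,0)$ is commutative of rank $3$, these are genuine formal codegrees of three one-dimensional representations, so by Corollary \ref{fcdivis} each $f_E$ divides $\dim(\C)=f_3$ (the Frobenius--Perron codegree, the largest), and by Corollary \ref{dfield} the $f_E$ lie in the dimension field, which for a pseudo-unitary representative is $\BQ(\sqrt{m^2+4},\sqrt5)$. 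These algebraicity and integrality constraints should already eliminate all but finitely many $m$, and the cyclotomic and d-number tests will likely prune most survivors.

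\emph{The sharpest tool}, and what I expect does the decisive work for the remaining small cases, is the analysis of $I(\be)$ via Theorems \ref{zero} and \ref{I1}. For a categorification $\C$ (which by Lemma \ref{pivot} we may replace by a pseudo-unitary Galois conjugate without changing the based ring), Theorem \ref{I1} tells us $[I(\be):A_E]=\dim(E)=1$ for each of the three characters $E$, so $I(\be)=\be\oplus A\oplus B$ with $\dim A=\dim(\C)/f_2$ and $\dim B=\dim(\C)/f_1$, and all three summands have $\theta=1$. Then, exactly as in the proof of Lemma \ref{2121}, I would compute $F(A)$ and $F(B)$ from the dimensions using $F(A)=\be\oplus aX\oplus bY$ with $\dim A=1+a\dim(X)+b\dim(Y)$, and then examine $I(X)$ and $I(Y)$ via Proposition \ref{FI}, i.e. $F(I(Z))=\bigoplus_{W}W\otimes Z\otimes W^*$. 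The constraint $\Tr(\theta_{I(X)})=\Tr(\theta_{I(Y)})=0$ from Theorem \ref{zero}, combined with the self-duality of $X$ and $Y$ and the Frobenius--Schur sign $\Tr(\theta_{I(X)}^2)=\pm\dim(\C)$ from Theorem \ref{FS}, forces balancing conditions on the $\theta$-eigenvalues appearing in $I(X)$ and $I(Y)$ that can only be met for very small $m$.

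\emph{The main obstacle} I anticipate is that for $m=0,1,2$ the based ring genuinely is categorifiable and realized by known categories ($\Rep(\BZ/3\BZ)$-like, $\Rep(S_3)$-type, and an Ising-adjacent or $E_6$-type category respectively, matching Theorem \ref{uraa}), so the argument must be delicate enough to exclude $m\ge 3$ while not accidentally excluding $m\le 2$. Concretely, the hard step is showing that for $m\ge 3$ one cannot simultaneously satisfy (a) the d-number/divisibility constraints on the roots of $p(t)$ and (b) the vanishing-trace conditions $\Tr(\theta_{I(X)})=0$ from the explicit $F$-decomposition of $I(\be)$. I expect the trace condition on $I(X)$ to be the binding one: as $m$ grows, $\dim(X)\sim m$ grows while the available simple summands of $\Z(\C)$ with the required dimensions become too few and too constrained in their $\theta$-values to cancel, contradicting Theorem \ref{zero}. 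Pinning down this contradiction uniformly in $m\ge 3$ is the crux of the proof.
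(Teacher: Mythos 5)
Your proposal founders at the very first step: you have the fusion rules of $K(1,0,m,0)$ wrong. Substituting $(k,l,m,n)=(1,0,m,0)$ into \eqref{frules} gives $Y^2=1+lX+nY=\be$, $XY=YX=kX+lY=X$, and $X^2=\be+mX+kY=\be+mX+Y$. So $Y$ is an \emph{invertible} object of order $2$ and any categorification is a near-group category for $\BZ/2\BZ$ (as the paper states explicitly right after the theorem); there is no Fibonacci subring, $\FPdim(Y)=1$ rather than $\frac{1+\sqrt5}{2}$, and $\FPdim(X)=\frac{m}{2}+\sqrt{\frac{m^2}{4}+2}$ rather than $\frac{m+\sqrt{m^2+4}}{2}$. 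The rules you wrote down ($Y^2=\be+Y$, $XY=X$) are not even associative: $X(Y\cdot Y)=X+XY=2X$ while $(XY)Y=X$. Everything downstream that depends on the dimension field $\BQ(\sqrt{m^2+4},\sqrt5)$ or on the ``golden-ratio subcategory'' is therefore vacuous. Moreover, your hope that the codegree, d-number and cyclotomic constraints ``eliminate all but finitely many $m$'' is misplaced for this family: $f=2$ is always a formal codegree, the remaining quadratic factor is well behaved for every $m$, and these tests do not cut the family down to finitely many cases--which is precisely why a finer argument is needed.

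Your second-stage strategy --- decompose $I(\be)$, $I(Y)$, $I(X)$ via Theorem \ref{I1} and Proposition \ref{FI}, and play $\Tr(\theta_{I(X)})=0$ (Theorem \ref{zero}) against $\Tr(\theta_{I(X)}^2)=\pm\dim(\C)$ (Theorem \ref{FS}) --- is in fact the paper's route, so the outline is sound. But you have not carried out any of it, and the part you flag as ``the crux'' is where all the work lies. The paper first deduces $F(A)=\be\oplus\frac{m}{2}X$ and $F(B)=\be\oplus Y\oplus\frac{m}{2}X$ (forcing $m$ to be even), then $I(Y)=B+C+D$ with $\theta_C=\theta_D=-1$, then writes $I(X)=\frac{m}{2}A+\frac{m}{2}B+\alpha C+\beta D+\sum_i\gamma_iE_i$ and extracts three numerical identities for $\sum_i\gamma_i^2$, $\sum_i\gamma_i^2\theta_i$, and $\sum_i\gamma_i^2\theta_i^2$. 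Comparing $\bigl|\sum_i\gamma_i^2\theta_i^2\bigr|\le\sum_i\gamma_i^2$ rules out $m\ge 6$, but $m=4$ survives this inequality and is only eliminated by invoking Loxton's theorem on representing $-2-2\sqrt6$ as a sum of ten roots of unity and observing the resulting set is not closed under squaring. As written, your proposal neither establishes the correct combinatorics of $I(\be)$ nor supplies the final contradiction for any $m\ge 3$.
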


The multiplication in the based ring $K(1,0,m,0)$ is given by
\begin{equation}
Y^2=1,\; XY=YX=X,\; X^2=1+mX+Y
\end{equation}

Thus, any category with $K(\C)=K(1,0,m,0)$ is an example of {\em near-group category} associated
with finite group $\BZ/2\BZ$, see e.g. \cite[Definition 1.1]{Th}. In particular, such category or its Galois
conjugate is pseudo-unitary by \cite[Theorem 1.3]{Th}.

In order to prove Theorem \ref{near} we will assume for the sake of contradiction that $m>2$ and
$\C$ is a pseudo-unitary fusion category with $K(\C)=K(1,0,m,0)$. We will endow $\C$ with the pseudo-unitary spherical structure. We set $d=\dim (X)=\FPdim(X)$.

\begin{lemma} We have $d=\frac{m}2+\sqrt{\frac{m^2}4+2}$. In particular $d\not \in \BZ$ for
$m\ge 2$. $\square$
\end{lemma}


\subsection{Simple objects in $\Z(\C)$} One of the formal codegrees of the based ring 
$K(1,0,m,0)$ is $f_1=2$. By Theorem \ref{I1} the object $I(\be)\in \Z(\C)$ decomposes into
the sum of 3 simple objects $I(\be)=\be \oplus A\oplus B$; we can assume that 
$\FPdim(B)=\frac{\FPdim(\C)}2$ whence $\FPdim(A)=\frac{\FPdim(\C)}2-1$. Using Proposition 
\ref{FI} we deduce that
\begin{equation}
F(A)=\be \oplus \frac{m}2X,\; F(B)=\be \oplus Y\oplus \frac{m}2X
\end{equation}
In particular, $m$ is forced to be even. Theorem \ref{zero} states that $\theta_A=\theta_B=1$.

Next we consider $I(Y)$. By Proposition \ref{FI} we have $\dim \Hom_{\Z(\C)}(I(Y),I(Y))=3$ and 
$\dim \Hom(I(Y),I(\be))=1$; we also have by definition $\dim \Hom_{\Z(\C)}(I(Y),B)=1$. Thus we
have a decomposition $I(Y)=B+C+D$ where $C$ and $D$ are distinct simple objects different from $A$ and $B$. Using Proposition \ref{FI} we have 
\begin{equation}
F(C)=Y+\alpha X,\; F(D)=Y+\beta X
\end{equation}
where $\alpha, \beta \in \BZ_{\ge 0}$ and $\alpha +\beta=\frac{m}2$. It is clear from 
Theorem \ref{zero} that $\theta_C=\theta_D=-1$.

Finally consider $I(X)$. We have
$$\dim \Hom_{\Z(\C)}(I(X), A)=\dim \Hom_{\Z(\C)}(I(X), A)=\frac{m}2,$$
$$\dim \Hom_{\Z(\C)}(I(X), C)=\alpha ,\; \dim \Hom_{\Z(\C)}(I(X), D)=\beta .$$
Thus 
\begin{equation}
I(X)=\frac{m}2A+\frac{m}2B+\alpha C+\beta D+\sum_i\gamma_i E_i
\end{equation}
where $\gamma_i\in \BZ_{>0}$ and the objects $E_i\in \Z(\C)$ satisfy $F(E_i)=\gamma_iX$.
Using Proposition \ref{FI} we compute
$$\frac{m^2}4+\frac{m^2}4+\alpha^2+\beta^2+\sum_i\gamma_i^2=\dim \Hom(F(I(Y),Y)=m^2+4$$
whence 
\begin{equation} \label{gam}
\sum_i\gamma_i^2=\frac{m^2}2+4-\alpha^2-\beta^2=\frac{m^2}4+4+2\alpha \beta .
\end{equation}
Set $\theta_i=\theta_{E_i}$. Theorem \ref{zero} implies
$$\frac{m}2(1+\frac{m}2d)+\frac{m}2(2+\frac{m}2d)-\alpha (1+\alpha d)-\beta (1+\beta d)+\sum_i\gamma_i^2\theta_id=0$$
whence
\begin{equation} \label{gams}
\sum_i\gamma_i^2\theta_i=\alpha^2+\beta^2-\frac{m^2}2-\frac{m}d=
-2\alpha \beta -\frac{m}2\sqrt{\frac{m^2}4+2}.
\end{equation}

Similarly, Theorem \ref{FS} implies
$$\frac{m}2(1+\frac{m}2d)+\frac{m}2(2+\frac{m}2d)+\alpha (1+\alpha d)+\beta (1+\beta d)+\sum_i\gamma_i^2\theta_i^2d=\pm \dim(\C)=\pm(4+md)$$
whence
\begin{equation} \label{gamfs}
\sum_i\gamma_i^2\theta_i^2=2\alpha \beta -\frac{m^2}4-(m\mp 2)\sqrt{\frac{m^2}4+2}.
\end{equation}

\subsection{Proof of Theorem \ref{near}}
Assume that $m>2$ and the based ring $K(1,0,m,0)$ is categorifiable. Recall that this implies
that $m$ is even. Combining equations 
\eqref{gam} and \eqref{gamfs} we obtain
$$\frac{m^2}4+4+2\alpha \beta =\sum_i\gamma_i^2\ge |\sum_i\gamma_i^2\theta_i^2|=
\frac{m^2}4+(m\mp 2)\sqrt{\frac{m^2}4+2}-2\alpha \beta .$$
Equivalently, $4\alpha \beta +4\ge (m\mp 2)\sqrt{\frac{m^2}4+2}$, which implies
$$\frac{m^2}4+4\ge 4\alpha \beta +4\ge (m\mp 2)\sqrt{\frac{m^2}4+2}
\ge (m-2)\sqrt{\frac{m^2}4+2}.$$
It is easy to see that the inequality $\frac{m^2}4+4\ge (m-2)\sqrt{\frac{m^2}4+2}$ fails for
$m\ge 6$, so we got a contradiction in this case.

In the remaining case $m=4$ we get from \eqref{gam}, \eqref{gams}, \eqref{gamfs}:
$$\sum_i\gamma_i^2=8+2\alpha \beta,\;  \sum_i\gamma_i^2\theta_i=-2\alpha \beta -2\sqrt{6},\;
\sum_i\gamma_i^2\theta_i^2=2\alpha \beta -4-(4\mp 2)\sqrt{6}$$
where $\alpha,\beta \in \BZ_{\ge 0}$, $\alpha +\beta =2$. 
Comparing $|\sum_i\gamma_i^2\theta_i^2|$ with $\sum_i\gamma_i^2$
as above we receive that $\alpha =\beta =1$ and 
$\sum_i\gamma_i^2\theta_i^2=-2-2\sqrt{6}=\sum_i\gamma_i^2\theta_i$, $\sum_i\gamma_i^2=10$.

It follows easily from \cite[Theorem 2]{Lo} that there is a unique way to represent $-2-2\sqrt{6}$ as 
a sum of 10 roots of unity, namely
$$-2-2\sqrt{6}=2(-1)+2\zeta_{24}^7+2\zeta^{-7}_{24}+2\zeta_{24}^{11}+2\zeta^{-11}_{24}$$
where $\zeta_{24}=e^{\pi i/12}$. Since this set of roots of unity is not closed under squaring,
we have a contradiction. Theorem \ref{near} is proved. $\square$

\begin{remark} One can give an alternative proof of Theorem \ref{near} for $m\ge 6$ using only \eqref{gam} and \eqref{gams}. For this one deduces from \cite{Lo} that at least
 $2\alpha \beta +m\sqrt{\frac13\left(\frac{m^2}4+2\right)}$ roots of unity are required in order to represent $-2\alpha \beta -\frac{m}2\sqrt{\frac{m^2}4+2}$ as their sum and obtains a contradiction with \eqref{gam}.
\end{remark}

\subsection{The ring $K(1,0,2,0)$} The fusion categories $\C$ with $K(\C)=K(1,0,2,0)$ were classified in \cite{HH}. We sketch here an alternative argument for this classification.
Recall that $I(\be)\in \Z(\C)$ has a natural structure of separable algebra, see e.g 
\cite[Lemma 3.5]{DMNO}. Moreover, \cite[Theorem 4.10]{DMNO} implies that this algebra has a separable subalgebra $I_0$ with 
$\FPdim(I_0)=\frac12\FPdim(I(\be))$. This forces $I_0=\be \oplus A$. Thus 
$F(I_0)=2\be \oplus X\in \C$ has a structure of separable algebra. This algebra has to be decomposable, so $\be \oplus X\in \C$ has a structure of separable algebra. It is easy to compute the "principal graph" of the category of $\be \oplus X-$modules in $\C$; it is precisely the Dynkin diagram of type $E_6$. Now the results of \cite{HH} follow from the classification of subfactors of type $E_6$, at least in the case when the category $\C$ admits a unitary structure. 

\subsection{Proof of Theorem \ref{uraa}} Let $\C$ be a fusion category of rank 3 admitting a pivotal
structure. Then by Theorems \ref{simple3} and \ref{near} the Grothendieck ring $K(\C)$ is isomorphic 
to one of the following: $K(\BZ/3\BZ)$, $K(1,1,1,0)$, $K(1,0,0,0)$, $K(1,0,1,0)$ or $K(1,0,2,0)$.
For the first 4 rings the result follows from the discussion in \cite[Section 4]{O3}, and for the last ring
the result follows from \cite[Theorem 1]{HH}. \qed

\section{Spherical structures}
\subsection{Sphericalization} \label{sphn}
Let $\C$ be a fusion category. A {\em sphericalization} $\tilde \C$ of $\C$ is defined in \cite[Remark 3.1]{ENO}.
Thus $\tilde \C$ is a spherical fusion category together with a tensor functor $\tilde F: \tilde \C \to \C$. 
The category $\C$ itself has a spherical structure if and only if the functor $\tilde F$ admits a tensor section $\C \to \tilde \C$. Equivalently, $\tilde \C$ should contain a fusion subcategory such that
restriction of $\tilde F$ to this subcategory is an equivalence.

The functor $\tilde F$ has the following properties: it maps simple objects to simple objects and for
any $X\in \O(\C)$ there are precisely two objects $X_+, X_-\in \O(\tilde \C)$ with $\tilde F(X_\pm)=X$
(the choice of $X_+$ and $X_-$ is arbitrary except for $\be_+=\be$). It follows that
$\be_-\ot \be_-=\be, \dim(\be_-)=-1$, and $X_\pm\ot \be_-=\be_-\ot X_\pm=X_\mp$. 
The following observation (\cite[proof of Lemma 2.3]{Th}) is quite useful:

(a) If $X\in \O(\C)$ is self-dual, then both $X_+$ and $X_-$ are self-dual.

Let $\varepsilon =[\be_-]\in K(\tilde \C)$; clearly $\varepsilon$ is a central element. Observe
that $K(\tilde \C)/\langle \varepsilon =1\rangle$ with a basis consisting of images of $[X_+]$ is
isomorphic to $K(\C)$ as a based ring (via the map $[X_+]\mapsto [X]$). Consider the ring
$\tilde K(\C)=K(\tilde \C)/\langle \varepsilon =-1\rangle$. This is a ring over $\BZ$ with a basis
consisting of images of $[X_+]$. In a sense this is just a basis up to signs since the signs of basis
elements depend on the choice of $X_\pm$ above. The following statements are easy to verify:

(b) Let $N$ be a structure constant of $K(\C)$ and let $\tilde N$ be the corresponding structure
constant of $\tilde K(\C)$ (remind that the bases of both rings are labeled by the same set $\O(\C)$).
Then $$|\tilde N|\le N,\;\; \mbox{and}\;\; \tilde N\equiv N\pmod{2}.$$

(c) The category $\C$ has a spherical structure if and only if there is a choice of signs of basis elements
of $\tilde K(\C)$ such that the corresponding structure constants equal.

\subsection{Quadratic rings}
Let us consider a based ring $K(k,l,m,n)$ and let $p(t)$ be the characteristic polynomial of operator $R$,
see Section \ref{notations}. We have the following possibilities for factorization of $p(t)$ over $\BQ$:

(i) $p(t)$ is irreducible. We say that $K(k,l.m,n)$ is {\em cubic} in this case.

(ii) $p(t)$ has irreducible quadratic factor. In this case we say that $K(k,l.m,n)$ is {\em quadratic}.

(iii) $p(t)$ factorizes into three linear factors. We say that $K(k,l,m,n)$ is {\em rational}.


If $K(\C)=K(k,l,m,n)$ is rational, then $\FPdim(\C)$ is integer, so $\C$ is spherical by
\cite[Proposition 8.24]{ENO}. We also have the following

\begin{theorem} \label{quadra}
Let $\C$ be a fusion category such that $K(\C)=K(k,l,m,n)$ is quadratic.
Then $\C$ has a spherical structure.
\end{theorem}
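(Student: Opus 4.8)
The plan is to use the sphericalization $\tilde\C$ from Section~\ref{sphn} together with the descent criterion (c) there. Since every simple object of $\C$ is self-dual (the fusion rules \eqref{frules} have $X^*=X$ and $Y^*=Y$), property (a) shows that each of $\be_\pm, X_\pm, Y_\pm$ is self-dual in $\tilde\C$, and by (c) it suffices to exhibit signs $\sigma_X,\sigma_Y\in\{\pm1\}$ for the basis of $\tilde K(\C)$ making its structure constants equal to those of $K(\C)$. Writing the raw constants of $\tilde K(\C)$ in a fixed labelling $e_X=[X_+]$, $e_Y=[Y_+]$ as $e_X^2=\be+Ae_X+Ce_Y$, $e_Y^2=\be+A'e_X+N'e_Y$, $e_Xe_Y=Ke_X+Le_Y$, property (b) gives $|A|\le m$, $A\equiv m\pmod 2$, and similarly for the others. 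A short computation shows that the sign choice demanded by (c) exists precisely when all these constants are \emph{extremal}, that is $|A|=m$, $|C|=|K|=k$, $|A'|=|L|=l$, $|N'|=n$, and the resulting signs are consistent (they are forced by $\sigma_X=A/m=A'/l=L/l$ and $\sigma_Y=C/k=N'/n=K/k$). Thus the theorem reduces to proving extremality and consistency.

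First I would pin down the formal codegrees of $\tilde K(\C)$. The multiset of formal codegrees of the spherical category $\tilde\C$ is $\{2f: f\in S\}\sqcup\{2g: g\in\tilde S\}$, where $S$ and $\tilde S$ are the formal codegrees of $K(\C)$ and of $\tilde K(\C)$ (this uses the splitting of $K(\tilde\C)\otimes k$ along $\varepsilon=\pm1$ and the doubling appearing in the proof of Corollary~\ref{fcdivis}), and this multiset is stable under $\mathrm{Gal}(\bar\BQ/\BQ)$. Since $K(\C)$ is quadratic, $S=\{f_0,f_+,f_-\}$ with $f_0\in\BQ$ and $\{f_+,f_-\}$ a conjugate quadratic pair; if $\FPdim(\C)=f_3$ were rational then $\C$ would be integral, hence spherical by \cite[Proposition 8.24]{ENO}, so I may assume $f_3\in\{f_+,f_-\}$ is irrational. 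The categorical-dimension character of $\tilde\C$ factors through $\tilde K(\C)$ (as $\dim(\be_-)=-1$) and its formal codegree there equals the global dimension $\dim(\C)$, which is a Galois conjugate of $\FPdim(\C)$ and hence lies in $\{f_+,f_-\}$. As $\tilde S$ is Galois stable it must then contain both $f_+$ and $f_-$, and Proposition~\ref{trRinv} (which gives $\sum 1/f=1$ for both rings) forces the remaining codegree to be $f_0$. Thus $\tilde S=S$.

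To obtain extremality I would evaluate characters on the raw fusion relations. The Frobenius--Perron dimension of $\tilde\C$ factors through $K(\C)$ (it sends $\be_-\mapsto 1$) and only records the sums $a+a'=m$, and so on; the new information comes from the categorical dimension $u=\dim(X_+)$, $v=\dim(Y_+)$, which lies in the quadratic field $L(\C)$ (its codegree is $f_\pm$ by the previous step). The crux is to identify $(u,v)$ with $(\sigma_X\phi(X),\sigma_Y\phi(Y))$ for one of the quadratic characters $\phi$ of $K(\C)$. Granting this, substituting into $u^2=1+Au+Cv$ and comparing with $\phi(X)^2=1+m\phi(X)+k\phi(Y)$ yields $(m-\sigma_X A)\phi(X)+(k-\sigma_Y C)\phi(Y)=0$; and the \emph{key lemma} that $\phi(X),\phi(Y)$ satisfy no nontrivial $\BQ$-linear relation $\alpha\phi(X)+\beta\phi(Y)=0$ (for $\phi(X)/\phi(Y)\in\BQ$ together with $XY=kX+lY$ would force $\phi$ to be rational, contradicting quadraticity, the degenerate cases $k=0$ or $l=0$ being excluded by \eqref{klmn}) forces $A=\sigma_X m$ and $C=\sigma_Y k$. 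The same device applied to the relations for $e_Y^2$ and $e_Xe_Y$ forces $A'=\sigma_X l$, $N'=\sigma_Y n$, $K=\sigma_Y k$, $L=\sigma_X l$, which is exactly extremality together with consistency of the signs. Criterion (c) then produces the spherical structure.

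The step I expect to be the main obstacle is the identification $(u,v)=(\sigma_X\phi(X),\sigma_Y\phi(Y))$, i.e. showing that the categorical dimensions of the sphericalization are, up to sign, a single Galois conjugate of the Frobenius--Perron dimensions of $\C$. This is essentially the statement that $\tilde K(\C)$ is isomorphic to $K(\C)$ as a based ring up to signs, so it cannot follow from the mere coincidence $\tilde S=S$ of formal codegrees (a character is not determined by its codegree alone); it is precisely here that self-duality and the two-dimensionality of $L(\C)$ in the quadratic case must enter, for instance by combining the norm relation $u^2+v^2=\phi(X)^2+\phi(Y)^2$ with the multiplicative relation $uv=Ku+Lv$ to rigidify the solution inside $L(\C)$.
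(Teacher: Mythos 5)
Your reduction of the theorem to extremality of the structure constants of $\tilde K(\C)$, and your observation that $\tilde S=S$ (equivalently $\tilde b=b$) would finish the proof via \eqref{beq} and criterion (c), are sound and match the skeleton of the paper's argument. The genuine gap is the sentence asserting that the global dimension $\dim(\C)$ ``is a Galois conjugate of $\FPdim(\C)$ and hence lies in $\{f_+,f_-\}$.'' This is not a known general fact, it is not justified by anything you write, and it is essentially equivalent to the hard part of the theorem: if $\C$ had a spherical structure, then (by the mechanism of Lemma \ref{pivot}) a Galois conjugate of $\C$ would be pseudo-unitary and the claim would follow, so you are in effect assuming a consequence of the conclusion. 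What the available tools actually give is much weaker: $\dim(\C)$ is a root of $\tilde p(t)=t^3-\tilde b t^2+\tilde c t-\tilde c$ and lies in the quadratic field $L(\tilde\C)=\BQ(\sqrt{\alpha^2-4\beta})$; but a priori it could be an integer (so that $\tilde p$ has three integer roots), or an irrational element of that field that is \emph{not} conjugate to $\FPdim(\C)$ --- for instance a root of $t^2-\tilde\alpha t+\tilde\beta$ with $\tilde\beta$ a proper multiple of $\beta$, or with $\tilde\beta=\beta$ but $\tilde\alpha\ne\alpha$. Once that claim fails, Galois stability only places \emph{some} conjugate quadratic pair into $\tilde S$, Proposition \ref{trRinv} no longer pins down the third codegree as $f_0$, and the extremality argument of your third paragraph never gets off the ground (it also presupposes the identification $(u,v)=(\sigma_X\phi(X),\sigma_Y\phi(Y))$, which you correctly flag as unproven but misdiagnose as a technical rigidification issue rather than as the missing theorem itself).

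The paper's proof is devoted precisely to excluding these alternatives. The three-integer-root case is eliminated by showing it forces $\tilde b=10$, $\dim(\C)=4$, $\gamma=2$, hence $K(\C)=K(1,0,m,0)$, a near-group ring, where sphericality is supplied by \cite[Theorem 1.3]{Th}. The remaining cases use Corollary \ref{fcdivis} to get $\beta\mid\tilde\beta$ together with the bound $\tilde\beta<4\beta$; the cases $\tilde\beta=2\beta$ and $\tilde\beta=3\beta$ are killed by d-number arguments, and the case $\tilde\beta=\beta$ with $\tilde\gamma<\gamma$ requires the full Diophantine analysis of the discriminant ratio (the equation $A^2+B^2+dq^2=3AB$ and the subcases $dq^2\in\{1,5,25\}$). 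None of this can be bypassed by the Galois-stability observation alone, so as written your argument does not prove the theorem.
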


\begin{proof} If $\FPdim(\C)\in \BZ$ then $\C$ has spherical structure by \cite[Proposition 8.24]{ENO}. Thus we 
will assume for the rest of the proof that $\FPdim(\C)\not \in \BZ$. The polynomial $p(t)$ factorizes
$p(t)=(t-\gamma)(t^2-\alpha t+\beta)$ where $\alpha, \beta, \gamma \in \BZ$ and the second factor is irreducible. From the decomposition
$$(t-\gamma)(t^2-\alpha t+\beta)=t^3-(\alpha+\gamma)t^2+(\beta +\alpha \gamma)t-
\gamma \beta$$
we find 
\begin{equation} \label{alphagamma}
b=\alpha +\gamma ,\; c=\beta+\alpha \gamma =\gamma \beta ,\; 
\beta =\alpha \frac{\gamma}{\gamma -1}.
\end{equation}
In particular, we see that $\gamma -1$ is a divisor of $\alpha$ and 
$\beta=\frac{\alpha}{\gamma -1}\gamma \ge \gamma$.

We now consider the sphericalization $\tilde \C$ and the ring $\tilde K(\C)$. 
Using \ref{sphn} (a) and (b) we see that $\tilde K(\C)=K(\tilde k, \tilde l, \tilde m, \tilde n)$ 
(see \eqref{frules}) where
$$|\tilde k|\le k, |\tilde l|\le l, |\tilde m|\le m, |\tilde n|\le n; k-\tilde k, l-\tilde l, m-\tilde m, n-\tilde n\; \; \mbox{are even.}$$
By changing the signs of basis elements of $\tilde K(\C)$ we can (and will) assume that
$$\tilde k+\tilde n\ge 0\; \; \mbox{and}\;\; \tilde l+\tilde m\ge 0.$$
We will generally use tilde for notations associated with $\tilde K(\C)$ parallel to those in $K(\C)$, 
for example $\tilde R, \tilde p(t)$ etc. Notice that the sum of inverse roots of $\tilde p(t)$ equals 1:
indeed the formal codegrees of $K(\tilde \C)$ are precisely doubled formal codegrees of $K(\C)$ and
doubled roots of $\tilde p(t)$, so the result follows from Proposition \ref{trRinv}. Thus
$$\tilde p(t)=t^3-\tilde bt^2+\tilde ct-\tilde c.$$
It follows from \eqref{beq} that $\tilde b\le b$. Moreover, the equality $\tilde b=b$ implies $\tilde k=k,
\tilde l=l, \tilde m=m, \tilde n=n$, so the category $\C$ is spherical by \ref{sphn} (c).
Thus in the rest of this proof we will assume that $\tilde b<b$ and derive a contradiction.

Observe that the dimension homomorphism $K(\tilde \C)\to k$ factors through $\tilde K(\C)$,
so the degree of the dimension field $L(\tilde \C)$ over $\BQ$ is at most three. On the other hand
it follows from Corollary \ref{dfield} that $L(\tilde \C)$ contains the splitting field of $t^2-\alpha t+\beta$;
it follows that $L(\tilde \C)$ is precisely this field. Thus we have two homomorphisms $\tilde K(\C)\to k$
with values in $L(\tilde \C)$: the dimension homomorphism and its Galois conjugate. Thus the remaining
homomorphism must be Galois invariant, so it lands in $\BZ$. In particular, the polynomial $\tilde p(t)$
must be reducible. 

Assume that $\tilde p(t)$ has three integer roots $\tilde f_1, \tilde f_2, \tilde f_3$. 
Two of these roots corresponding to the dimension
homomorphism and its Galois conjugate must be equal, say $\tilde f_1=\tilde f_2$. From the
equation $\frac1{\tilde f_1}+\frac1{\tilde f_2}+\frac1{\tilde f_3}=1$ we find that
$(\tilde f_1, \tilde f_2, \tilde f_3)=(3,3,3)$ or $(\tilde f_1, \tilde f_2, \tilde f_3)=(4,4,2)$.
Hence $\tilde b=9$ or $10$. From $\tilde b=9$ we get a contradiction using \eqref{beq} and
\eqref{klmn}. Thus $\tilde b=10$ and $\dim(\C)=\tilde f_1=4$. Therefore $\gamma$ divides $4$ by
Corollary \ref{fcdivis}. Observe that $\gamma=f_E$ where $E$ is one dimensional representation
of $K(\C)$ corresponding to $\BZ-$valued character $\chi$. Thus $\gamma =1+s^2+t^2$ where
$s=\chi(X)\in \BZ$ and $t=\chi(Y)\in \BZ$, see \cite[Example 2.4]{Od}. It follows that $\gamma=2$
and in view of Remark \ref{bas} (i) we can assume that $s=0$. Then \eqref{frules} implies that $l=0$
and $K(\C)=K(1,0,m,0)$. However in this case $\C$ is near-group category, so it is spherical
by \cite[Theorem 1.3]{Th}. Hence Theorem \ref{quadra} holds in this case.

Thus we will assume that $\tilde p(t)$ is a product of linear factor and of irreducible quadratic factor
$$\tilde p(t)=(t-\tilde \gamma)(t^2-\tilde \alpha t+\tilde \beta).$$
Note that $\dim(\C)$ is one of the roots of $t^2-\tilde \alpha t+\tilde \beta$,
and the splitting fields of $p(t)$ and of $\tilde p(t)$ coincide

Clearly,
\begin{equation}
\tilde b=\tilde \alpha +\tilde \gamma ,\; \tilde c=\tilde \beta+\tilde \alpha \tilde \gamma =\tilde \gamma \tilde \beta ,\; 
\tilde \beta =\tilde \alpha \frac{\tilde \gamma}{\tilde \gamma -1}.
\end{equation}

We have $\tilde \alpha+\tilde \gamma=\tilde b<b=\alpha +\gamma$ and $\frac{\tilde \gamma}{\tilde \gamma -1}\le 2$ whence
$$\tilde \beta=\tilde \alpha \frac{\tilde \gamma}{\tilde \gamma -1}<2(\alpha +\gamma -\tilde \gamma)=2(\beta \frac{\gamma -1}{\gamma}+\gamma -\tilde \gamma)<4\beta$$
(recall that $\beta \ge \gamma$). Since $\frac{\tilde \beta}{\beta}\in \BZ$ by Corollary \ref{fcdivis}, we have the following possibilities:

{\em Case 1:} $\tilde \beta=3\beta$. In this case $2\tilde \alpha\ge \tilde \alpha\frac{\tilde \gamma}{\tilde \gamma-1}=
3\alpha \frac{\gamma}{\gamma-1}$ whence 
$\tilde \alpha\ge \frac32\alpha +\frac32\frac{\alpha}{\gamma -1}$. Notice that by Corollary \ref{fcdivis} 
$\gamma$ divides
$\dim(\C)$, hence $\gamma^2$ divides the norm $\tilde \beta$ of $\dim(\C)$ which is equivalent to
$\frac{3\alpha}{\gamma (\gamma -1)}\in \BZ$. If $\frac{3\alpha}{\gamma (\gamma -1)}\ge 2$ then
$\frac32\frac{\alpha}{\gamma -1}\ge \gamma$, so $\tilde \alpha>\alpha +\gamma$ which contradicts to the inequality $\tilde \alpha+\tilde \gamma<\alpha +\gamma$. If $\frac{3\alpha}{\gamma (\gamma -1)}=1$ then $\alpha =\frac13\gamma(\gamma-1),\; \beta=\frac13\gamma^2$. Thus $\gamma$ is divisible by 3, so $\frac{\alpha^2}{\beta}=\frac13(\gamma-1)^2\not \in \BZ$, whence $\FPdim(\C)$ is not a d-number and we have a contradiction with Proposition \ref{dnumt}. 

{\em Case 2:} $\tilde \beta=2\beta$. In this case $2\tilde \alpha\ge \tilde \alpha\frac{\tilde \gamma}{\tilde \gamma-1}=
2\alpha \frac{\gamma}{\gamma-1}$ whence 
$\tilde \alpha\ge \alpha +\frac{\alpha}{\gamma -1}$. Notice that by Corollary \ref{fcdivis} $\gamma$ divides
$\dim(\C)$, hence $\gamma^2$ divides the norm $\tilde \beta$ of $\dim(\C)$ which is equivalent to
$\frac{2\alpha}{\gamma (\gamma -1)}\in \BZ$. If $\frac{2\alpha}{\gamma (\gamma -1)}\ge 2$ then
$2\frac{\alpha}{\gamma -1}\ge \gamma$, so $\tilde \alpha>\alpha +\gamma$ which contradicts to the inequality $\tilde \alpha+\tilde \gamma<\alpha +\gamma$. If $\frac{2\alpha}{\gamma (\gamma -1)}=1$ then $\alpha =\frac12\gamma(\gamma-1),\; \beta=\frac12\gamma^2$. Thus $\gamma$ is even, so $\frac{\alpha^2}{\beta}=\frac12(\gamma-1)^2\not \in \BZ$, whence $\FPdim(\C)$ is not a d-number and we have a contradiction with Proposition \ref{dnumt}.

{\em Case 3:} $\tilde \beta=\beta$. First of all we claim that $\tilde \gamma<\gamma$. Indeed inequality
$\tilde \gamma\ge \gamma$ implies $\tilde \alpha=\alpha \frac{1+\frac1{\gamma-1}}{1+\frac1{\tilde \gamma-1}}\ge \alpha$, so $\tilde \alpha+\tilde \gamma\ge \alpha +\gamma$.

The splitting fields of the polynomials $t^2-\alpha t+\beta$ and $t^2-\tilde \alpha t+\tilde \beta$ should coincide, equivalently the ratio of their discriminants $\frac{\alpha^2-4\beta}{\tilde \alpha^2-4\tilde \beta}=
\frac{\frac{\alpha^2}{\beta}-4}{\frac{\tilde \alpha^2}{\beta}-4}$ should be a square of rational number (notice that the numerator and denominator of the last expression are integers since $\dim(\C)$ and $\FPdim(\C)$ are d-numbers). Thus there exist a square free integer $d$ and positive integers $x,y$ such that
\begin{equation}
\frac{\alpha^2}{\beta}-4=\left(\frac{\gamma-1}{\gamma}\right)^2\beta-4=dx^2,\;
\frac{\tilde \alpha^2}{\beta}-4=\left(\frac{\tilde \gamma-1}{\tilde \gamma}\right)^2\beta-4=dy^2.
\end{equation}
Henceforth we have
$$\frac{dx^2+4}{dy^2+4}=\left(\frac{1+\frac1{\tilde \gamma-1}}{1+\frac1{\gamma-1}}\right)^2.$$
Let $\frac{1+\frac1{\tilde \gamma-1}}{1+\frac1{\gamma-1}}=\frac{A}B$ where $\frac{A}B$ is in its lowest terms. Notice that inequalities $2\le \tilde \gamma<\gamma$ imply $1<\frac{A}B<2$. The equality
$\frac{dx^2+4}{dy^2+4}=\left(\frac{A}B\right)^2$ is equivalent to
$$d(Bx-Ay)(Bx+Ay)=4(A^2-B^2).$$
Since $d$ is square free, the numbers $Bx-Ay$ and $Bx+Ay$ are both even. We set 
$q=\frac{Bx-Ay}2$, so $\frac{Bx+Ay}2=\frac{A^2-B^2}{dq}$. Hence
\begin{equation}\label{AyBx}
Bx=\frac{A^2-B^2}{dq}+q=\frac{A^2-B^2+dq^2}{dq},\; Ay=\frac{A^2-B^2}{dq}-q=\frac{A^2-B^2-dq^2}{dq}.
\end{equation}
These equalities imply that $A^2+B^2+dq^2$ is divisible by both $A$ and $B$, so it is divisible
by $AB$. We claim that 
$$2<\frac{A^2+B^2+dq^2}{AB}<4.$$
Indeed, from \eqref{AyBx} we get $A^2-B^2-dq^2=Aydq>0$, so 
$\frac{A}B-\frac{B}A>\frac{dq^2}{AB}$, whence $\frac{dq^2}{AB}<\frac32$ (since $\frac32$ is the
maximal value of function $x-\frac1x$ for $x\in [1,2]$). Now $\frac{A^2+B^2+dq^2}{AB}=\frac{A}B+\frac{B}A+\frac{dq^2}{AB}$ where $\frac{A}B+\frac{B}A\in (2,\frac52)$ (since $\frac{A}B\in (1,2)$) and the desired inequality is established. The previous remarks imply
$$\frac{A^2+B^2+dq^2}{AB}=3$$
and, eliminating $dq^2$ we get
\begin{equation}\label{xy}
x=\frac{3A-2B}{dq},\; y=\frac{2A-3B}{dq}.
\end{equation}
Thus $dq$ is divisor of $5A=3(3A-2B)-2(2A-3B)$ and $5B=2(3A-2B)-3(2A-3B)$; since $A$ and
$B$ are coprime we see that $dq$ divides $5$. It follows that $dq^2=1,5$ or $25$.

{\em Subcase $dq^2=25$.} The equality $A^2+B^2+25=3AB$ is equivalent to
$(2A-3B)^2-5B^2+100=0$ which implies that both $A$ and $B$ are divisible by 5. This is a contradiction.

{\em Subcase $dq^2=1$.} The equality $A^2+B^2+1=3AB$ is equivalent to $\frac1{B^2}=3\frac{A}B-\frac{A^2}{B^2}-1$ which implies $\frac1{B^2}>1$ (since the minimal value of the function $3x-x^2-1$ on the interval $[1,2]$ is 1), which is a contradiction.

{\em Subcase $dq^2=5$.} As in the previous case we get $\frac5{B^2}>1$ whence $\frac{A}B=\frac32$. Then \eqref{xy} gives $y=0$ and we get a final contradiction.
\end{proof}
\subsection{Examples for cubic rings}
It is easy to see that the ring $K(k,l,m,n)$ is cubic if and only if $K(k,l,m,n)\ot_\BZ \BQ$ is a field.
The following result is useful in order to determine whether cubic $K(k,l,m,n)$ passes
the cyclotomic test:
\begin{proposition} \label{discc}
 Assume that $K(k,l,m,n)\ot_\BZ \BQ$ is a field. This field is abelian
if and only if $c$ is a square of integer.
\end{proposition}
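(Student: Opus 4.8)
The plan is to recognize the matrix $R$ of \eqref{matrix} as the Gram matrix of the trace form of the cubic field $F = K(k,l,m,n)\ot_\BZ\BQ$, and thereby to reduce the statement to the classical criterion for a cubic field to be Galois. Since $F$ is assumed to be a field, the subring $K := K(k,l,m,n) = \BZ\cdot 1\oplus \BZ X\oplus \BZ Y$ is an order in $F$: it is a unital subring, free of rank $3$ over $\BZ$, spanning $F$ over $\BQ$. The whole proof rests on computing the discriminant of this order.

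First I would compute the Gram matrix $T = (\Tr(e_ie_j))$ of the trace form $\Tr = \Tr_{F/\BQ}$ of the regular representation, in the $\BZ$-basis $e_1 = 1,\ e_2 = X,\ e_3 = Y$, directly from the fusion rules \eqref{frules}. Using that $X$ and $Y$ are self-dual, one gets $\Tr(X) = l+m$ and $\Tr(Y) = k+n$ (traces of the multiplication operators), and then $\Tr(X^2)=\Tr(1+mX+kY)=3+(l+m)m+(k+n)k$, $\Tr(XY)=\Tr(kX+lY)=(l+m)k+(k+n)l$, and $\Tr(Y^2)=3+(l+m)l+(k+n)n$. I would observe that these entries match $R$ of \eqref{matrix} term by term, so $T=R$. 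Hence the discriminant of the order is $\mathrm{disc}(K)=\det T=\det R=c$.

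Next I would invoke two standard facts from algebraic number theory. For an order $K$ in $F$ with ring of integers $\O_F$ one has $\mathrm{disc}(K)=[\O_F:K]^2\,\mathrm{disc}(F)$; as the index is an integer, $c=\mathrm{disc}(K)$ and the field discriminant $\mathrm{disc}(F)$ differ by a perfect square, so $c$ is a perfect square if and only if $\mathrm{disc}(F)$ is. Secondly, a cubic field $F/\BQ$ is abelian if and only if it is Galois (a degree-$3$ Galois group is forced to be $\BZ/3\BZ$, hence abelian, and abelian extensions are Galois), and this holds if and only if $\mathrm{disc}(F)$ is a perfect square: passing to the Galois closure $L$ of the minimal polynomial, $\mathrm{Gal}(L/\BQ)$ is $A_3$ rather than $S_3$ exactly when the discriminant is a square, and $F=L$ precisely in that case. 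Combining, $F$ is abelian $\iff \mathrm{disc}(F)$ is a square $\iff c$ is a square (a positive integer being a rational square iff a perfect square), which is the assertion.

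The only real content is the identification $\det R=\mathrm{disc}(K)=c$, so the step to get right is the entrywise comparison of $T$ with $R$, which hinges on the self-duality of $X$ and $Y$ (for $K(\BZ/3\BZ)$ this would fail, but that ring is not cubic, so it is excluded). Once $c$ is seen to be the discriminant of the order $K$, the equivalence is purely classical. The potentially worrying point, that the discriminant $\Delta$ of the characteristic polynomial $p(t)$ itself need not be a perfect square, causes no trouble: writing $\omega = 1+X^2+Y^2$ one has $\Delta=[\,K:\BZ[\omega]\,]^2\,c$, so $\Delta$ and $c$ again differ by a square and one is free to work with whichever of the two is convenient.
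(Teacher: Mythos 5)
Your proposal is correct and follows essentially the same route as the paper: identify the trace form matrix of the order $K(k,l,m,n)$ with the matrix $R$ of \eqref{matrix}, conclude that the discriminant of the field equals $c=\det R$ modulo rational squares, and invoke the classical criterion that a cubic field is abelian iff its discriminant is a square. Your entrywise verification of $T=R$ and the remark about the index $[\O_F:K]^2$ simply spell out steps the paper leaves implicit.
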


\begin{proof} Observe that the matrix of the trace form on the ring $K(k,l,m,n)$ with respect to its
basis coincides with the matrix \eqref{matrix}. Thus the discriminant of the field 
$K(k,l,m,n)\ot_\BZ \BQ$ is $c$ modulo rational squares. Finally it is well known that a cubic field is abelian if and only if its discriminant is square.
\end{proof}

\begin{remark}\label{ramic}
 The proof of Proposition \ref{discc} shows that the primes ramified in 
the field $K(k,l,m,n)\ot_\BZ \BQ$ are divisors of $c$. Of course it is possible that some 
prime divisor of $c$ does not ramify. However if $K(k,l,m,n)$ passes the d-number test (see 
Proposition \ref{dnumt})
then a root of $p(t)=t^3-bt^2+ct-c$ is $\sqrt[3]{c}$ modulo units, so if $c=\prod_ip_i^{\alpha_i}$ 
(with distinct primes $p_i$) and $\alpha_i$ is not divisible by 3 then $p_i$ is ramified in
$K(k,l,m,n)\ot_\BZ \BQ$.
\end{remark}

The cubic based rings $K(k,l,m,n)$ which pass both the cyclotomic and d-number tests from Section \ref{prefus}
are very sparse (but probably there are infinitely many of them). 
A complete list of such rings with $l\le k\le 100000$ obtained as a result of computer search
includes categorifiable ring $K(1,1,1,0)$ and the following rings:


\begin{equation*}
\begin{array}{|c||c||c|}
\hline
k,l,m,n&k,l,m,n&k,l,m,n\\
\hline
29,13,62,7&1493,863,2529,530&34487,23653,50489,16090\\
\hline
83,77,91,70&2339,323,16906,49&43037,25271,74851,13924\\
\hline
305,179,530,99&7579,4063,14136,2179&47603,29251,77448,17987\\
\hline
409,331,137,566&8401,5099,13874,3075&54559,41609,71568,31711\\
\hline
1133,169,7624,21&8621,473,157182,23&86593,16571,453173,3042\\
\hline
1373,31,60753,2&20341,3887,106288,773&95705,14221,641435,2506\\
\hline
\end{array}
\end{equation*}

\begin{remark} \label{codd}
The following observations (especially the second one) are useful in the above 
mentioned computer search:

(i) If $b^3$ is divisible by $c$ then $p(t)$ is irreducible.
 Indeed, assume that $p(t)$ is reducible, $p(t)=(t-\gamma)(t^2-\alpha t+\beta)$. 
 Then \eqref{alphagamma} implies that $s:=\frac{\alpha}{\gamma -1}$ is an integer and we have
 $b=\alpha +\gamma =s\gamma +\gamma -s$, $c=\gamma \beta=\gamma^2s$. Hence $\frac{b^3}c\in \BZ$ is equivalent to $3\frac{s^2}{\gamma}+\frac{\gamma}s+3\frac{s}{\gamma}-\frac{s^2}{\gamma^2}\in \BZ$. This condition implies that $\gamma$ and $s$ must have the same $p-$adic
 valuation for each prime $p$, whence $\gamma =s$. Thus $b=s^2$ which is impossible since 
 by \eqref{beq} $b$ is congruent to 2 or 3 modulo 4.

(ii) $k$ and $l$ should be odd (which implies that $c$ is odd and $b\equiv 2\pmod{4}$).
Indeed if $k$ or $l$ is even then \eqref{klmn} and \eqref{magic} imply that $c$ is even, hence
$b$ is even by the d-number test. Then \eqref{beq} shows that $b\equiv 2\pmod{4}$.
Using Proposition \ref{discc} and the d-number test we see that $c$ is divisible by 4 but not by 8.
Hence by Remark \ref{ramic} the field $K(k,l,m,n)\ot_\BZ \BQ$ is ramified at 2. This is
a contradiction since the only extension of 2-adic numbers $\BQ_2$ with Galois group 
$\BZ/3\BZ$ is unramified.
\end{remark}

We describe now the computer computations which show that a categorification of the rings above
is spherical (and hence does not exist). 

1) First of all we can give an estimate of $\tilde c$. Using \eqref{magic} and \eqref{klmn}  we see that
$$\tilde c=4\tilde b-9+(\tilde k\tilde l-\tilde m\tilde n)^2+(2(\tilde k^2-\tilde l\tilde m)-1)^2=4\tilde b-9+(\tilde k\tilde l-\tilde m\tilde n)^2+(2(\tilde l^2-\tilde k\tilde n)-1)^2.$$
Hence,
$$\tilde c\le min(4b-9+(kl+mn)^2+(2k^2+2lm+1)^2,4b-9+(kl+mn)^2+(2l^2+2kn+1)^2).$$

\begin{example} For the ring $K(54559,41609,71568,31711)$ we obtain
$$\frac{\tilde c}c\le 11132295.1<15^6,$$
and for the ring $K(95705,14221,641435,2506)$ we obtain
$$\frac{\tilde c}c<144.6<13^2<3^6.$$
\end{example}

2) Recall that by Corollary \ref{fcdivis} $\tilde c$ should be divisible by $c$; also the fields 
$K(k,l,m,n)\ot_\BZ \BQ \simeq\BQ[t]/(p(t))$ and 
$K(\tilde k,\tilde l,\tilde m,\tilde n)\ot_\BZ \BQ \simeq \BQ[t]/(\tilde p(t))$ 
should be isomorphic (since by Corollary \ref{dfield} $K(k,l,m,n)\ot_\BZ \BQ$ should
be contained in the dimension field $L(\tilde \C)$ which is contained in
$K(\tilde k,\tilde l,\tilde m,\tilde n)\ot_\BZ \BQ$). In view of
Remark \ref{ramic} this shows that the prime factors of $\tilde c$ which do not appear in the prime decomposition of $c$ should have exponents divisible by 6. Using the estimate for $\tilde c$ above
we can get a list of possible values of $\tilde c$ (note that in view of Remark \ref{codd} (ii), $\tilde c$ should be odd).

\begin{example} For the ring $K(54559,41609,71568,31711)$ we have $c=7^2\cdot 127^2\cdot 2791^2$. Thus the possible values of $\tilde c/c$ are: $1, 3^6, 5^6, 7^6, 9^6, 11^6, 13^6, 7^2,
7^2\cdot 3^6, 7^2\cdot 5^6, 7^4, 7^4\cdot 3^6, 7^8, 7^2\cdot 127^2, 127^2, 2791^2.$ 

For the ring $K(95705,14221,641435,2506)$ we have $c=3^6\cdot 13^2\cdot 151^2\cdot 4861^2$
and the possible values of $\tilde c/c$ are $1,3^2,3^4$.
\end{example}


3) For each value of $\tilde c$ the possible values of $\tilde b$ satisfy the following: $\tilde b^3$ is divisible by
$\tilde c$ (and hence by $c$), $0<\tilde b\le b$, and $\tilde b\equiv b\pmod{4}$; this can be enumerated efficiently by computer. 

\begin{example} For the ring $K(54559,41609,71568,31711)$ we get 
$$\tilde b=b-i\cdot 4\cdot 7\cdot 127\cdot 2791,\; i\in \BZ, \; 0\le i\le 2040,$$
and for the ring $K(95705,14221,641435,2506)$ we get
$$\tilde b=b-i\cdot 4\cdot 3^2\cdot 13\cdot 151\cdot 4861,\; i\in \BZ, \; 0\le i\le 1279.$$
\end{example}

4) Then for each $\tilde c$ and $\tilde b$ we can compute the discriminant of $\tilde p(t)$ and
discard all the cases where the field $\BQ[t]/(\tilde p(t))$ is not abelian.  We found that the only
possibilities that are not discarded are either $\tilde b=b, \tilde c=c$ (which implies pseudo-unitarity of a categorification of $K(k,l,m,n)$) or the following two cases: $K(29,13,62,7)$ with 
$\tilde b=378, \tilde c=c=35721$,
and $K(83,77,91,70)$ with $\tilde b=1026, \tilde c=c=263169$.

5) Finally we deal with the two exceptional cases above. 

(a) In the case of $K(29,13,62,7)$ there exists
no ring $K(\tilde k,\tilde l,\tilde m,\tilde n)$ consistent with $\tilde b=378$ and $\tilde c=35721$ (we can find $\tilde k,\tilde l,\tilde m,\tilde n$ from equations $(\tilde k+\tilde n)^2+(\tilde l+\tilde m)^2=\tilde b-9$ and $(3\tilde k-\tilde n)^2+(3\tilde l-\tilde m)^2=\tilde b-1$ which follow from \eqref{beq}
and \eqref{klmn}; up to signs the only solution is $K(7,1,14,5)$ which is incompatible with the value
of $\tilde c$). Note that in this case the fields $\BQ[t]/(p(t))$ and $\BQ[t]/(\tilde p(t))$ are isomorphic
(one isomorphism sends $t\in \BQ[t]/(p(t))$ to $-\frac{13}9t^2+\frac{881}3t-294 \in \BQ[t]/(\tilde p(t))$).

(b) In the case of $K(83,77,91,70)$ the fields $\BQ[t]/(p(t))$ and $\BQ[t]/(\tilde p(t))$ are not isomorphic (here is an easy way to verify this: let $t_1, t_2, t_3$ be roots of $p(t)$ and let
$\tilde t_1, \tilde t_2, \tilde t_3$ be the roots of $\tilde p(t)$. If the fields above are isomorphic then
one of the numbers $t_1\tilde t_1+t_2\tilde t_2+t_3\tilde t_3$ or $t_1\tilde t_1+t_2\tilde t_3+t_3\tilde t_2$ would be integer. However a numerical computation shows that this is not the case.). Note that in this case the ring $K(1,1,23,-22)$ is compatible with the values of $\tilde b$ and $\tilde c$.

We summarize the results of this Section as follows:

\begin{theorem} \label{bigfus}
Assume that a based ring $K(k,l,m,n)$ with $k\ge l$ admits a non-spherical categorification. Then $k>100,000$.
\end{theorem}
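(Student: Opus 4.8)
The plan is to prove the contrapositive: for every based ring $K(k,l,m,n)$ with $l\le k\le 100{,}000$, any categorification is automatically spherical (rings that fail to be categorifiable satisfy the conclusion vacuously). First I would dispose of the non-cubic cases. If $K(k,l,m,n)$ is rational then $\FPdim(\C)\in\BZ$ and sphericality is immediate from \cite[Proposition 8.24]{ENO}; if it is quadratic then sphericality follows from Theorem \ref{quadra}. Hence a non-spherical categorification can only occur when $K(k,l,m,n)$ is cubic, i.e.\ when $p(t)$ is irreducible over $\BQ$.

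Next, any categorifiable based ring must pass the cyclotomic test and the d-number test (Proposition \ref{dnumt}), so I would restrict attention to cubic rings $K(k,l,m,n)$ with $l\le k\le 100{,}000$ passing both tests. Running the computer search — pruned by the observations of Remark \ref{codd}, namely that $b^3$ divisible by $c$ already forces irreducibility and that $k,l$ must be odd — yields the finite explicit list displayed above, together with the categorifiable (and spherical) ring $K(1,1,1,0)$. It then remains to show that each ring on this list forces any categorification to be spherical.

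For each listed ring I would pass to the sphericalization $\tilde\C$ and its ring $\tilde K(\C)=K(\tilde k,\tilde l,\tilde m,\tilde n)$; by \ref{sphn}(c) the category $\C$ is spherical precisely when $\tilde b=b$, so I assume $\tilde b<b$ and seek a contradiction, following the four-stage computation above. Stage one bounds $\tilde c$ from above via the identity \eqref{magic} together with \eqref{klmn}. Stage two uses $c\mid\tilde c$ (Corollary \ref{fcdivis}) and the coincidence of the splitting fields of $p(t)$ and $\tilde p(t)$ (Corollary \ref{dfield}), so that by Remark \ref{ramic} the prime factors of $\tilde c$ not dividing $c$ occur with exponent divisible by $6$, leaving finitely many candidate $\tilde c$. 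Stage three enumerates, for each $\tilde c$, the admissible $\tilde b$ (those with $\tilde b^3$ divisible by $\tilde c$, with $0<\tilde b\le b$, and with $\tilde b\equiv b\pmod 4$). Stage four discards every pair $(\tilde b,\tilde c)$ for which $\BQ[t]/(\tilde p(t))$ is non-abelian, using the discriminant criterion of Proposition \ref{discc}. Apart from $\tilde b=b,\ \tilde c=c$, which is the spherical case, only the two pairs $K(29,13,62,7)$ and $K(83,77,91,70)$ survive this sieve, and these I would eliminate by hand: in the first by checking that no ring $K(\tilde k,\tilde l,\tilde m,\tilde n)$ is compatible with $(\tilde b,\tilde c)=(378,35721)$, and in the second by verifying numerically that $\BQ[t]/(p(t))$ and $\BQ[t]/(\tilde p(t))$ are not isomorphic.

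The main obstacle is structural rather than technical: the conclusion rests on the completeness of a finite computer search, and the threshold $k>100{,}000$ is precisely the range that was exhaustively verified, with no a priori reason excluding further cubic candidates beyond it. Within the verified range the genuinely delicate point is stage four together with the two exceptional pairs, where the generic ramification and divisibility constraints of Remark \ref{ramic} and Corollary \ref{fcdivis} no longer decide matters and one must appeal to the finer arithmetic of the specific cubic fields $\BQ[t]/(p(t))$.
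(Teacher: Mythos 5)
Your proposal is correct and follows essentially the same route as the paper: dispose of the rational case via integrality of $\FPdim(\C)$ and the quadratic case via Theorem \ref{quadra}, reduce to cubic rings with $l\le k\le 100{,}000$ passing the cyclotomic and d-number tests (pruned by Remark \ref{codd}), and then run the same four-stage sieve on $(\tilde b,\tilde c)$ for the sphericalization, finishing the two surviving pairs $K(29,13,62,7)$ and $K(83,77,91,70)$ by hand exactly as in the paper. Your closing caveat about the result resting on the completeness of the finite computer search is also consistent with how the paper presents the threshold $k>100{,}000$.
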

 
\appendix
\section{Some results on near-group categories}\label{A}
\centerline{Dmitri Nikshych and Victor Ostrik}
\bigskip
\subsection{Near-group categories}
An easiest class of fusion categories is formed by {\em pointed fusion categories}, that is fusion
categories where all
simple objects are invertible, see e.g. \cite[p. 629]{ENO}. The pointed fusion categories are quite well understood. Thus it is a natural to look for fusion categories which would represent the next level 
of complexity. One such class, the {\em near-group categories} was introduced 
by J.~Siehler in \cite{Sie}. Namely,
a semisimple rigid tensor category $\C$ is called {\em near-group category} if all its simple objects 
except for one are invertible. Let $G$ be the group of isomorphism classes of invertible objects 
in $\C$ and let $X$ be the isomorphism class of the remaining non-invertible simple object. 
Then the group $G$ must be finite (so $\C$ is a fusion category) and there exists 
$n\in \BZ_{\ge 0}$ such that the Grothendieck 
ring $K(\C)$ has the following multiplication:

$$g\cdot h=gh\; \mbox{for}\; g,h\in G; g\cdot X=X\cdot g=X\; \mbox{for}\; g\in G; X^2=\sum_{g\in G}g+nX.$$

We will denote this based ring $K(G,n)$. Thus a near-group category is a categorification
of some $K(G,n)$. We have the following examples:

\begin{example} \label{s4ex}
(i) Let $\bF_q$ be a finite field. The category of representations of the semi-direct product
$\bF_q\rtimes \bF_q^\times$ is a categorification of $K(\BZ/(q-1)\BZ,q-2)$.

(ii) The Tambara-Yamagami categories \cite{TY} are categorifications of $K(G,0)$.

(iii) Many categorifications of $K(G,n)$ where $G$ is abelian and $n=|G|$ were constructed 
in \cite{Iz} and \cite{EG}. These results suggest strongly that there is an infinite series of
categories of this type.

(iv) (cf \cite[Theorem 14.40,II]{IK}) 
Let $\tilde G=S_4$ and let $\tilde H$ be a subgroup of order 4 in $\tilde G$ which is not
normal. Then the group theoretical category $\C(\tilde G,\tilde H,1,1)$ (see \cite[Section 8.8]{ENO})
is a categorification of $K(G,2)$ where $G$ is the dihedral group of order 8. This example is
in apparent contradiction with \cite[Theorem 1.1]{Sie}. This is because the group $G$ is implicitly assumed
to be abelian in {\em loc. cit.} as it was observed in \cite{EG}.
\end{example}

It was proved in \cite[Theorem 1.3]{Th} that any near-group category is a Galois conjugate of pseudo-unitary one. Thus Theorem \ref{I1}
applies in this case. The goal of this Section is to collect some interesting consequences 
of these results. Thus we show the following:


1) Assume that $\C$ is not integral (that is the roots of quadratic equation $d^2=|G|+nd$ are not integers). Then $G$ is abelian and $\frac{n}{|G|}\in \BZ$, see Theorem \ref{A1} 
(cf \cite[Theorem 2(a)]{EG}).

2) Assume that $\C$ is integral. Then $\C$ is weakly group theoretical in the sense of \cite{ENOw},
see Theorem \ref{A2}.


3) Assume that $G$ is abelian and $0<n<|G|$. Then $n=|G|-1$ and $G$ is cyclic (thus all such
categories are classified in \cite[Corollary 7.4]{EGO}), see Theorem \ref{A3} (cf \cite[Theorem 1.2]{Sie}). 
Note that Example \ref{s4ex} (iv) shows that
the assumption that $G$ is abelian can not be omitted.

\begin{remark} When this paper was completed we learnt that Izumi \cite{Izz} obtained very interesting
results on near-group $C^*-$categories. In particular he obtained counterparts of Theorems \ref{A1} and
\ref{A3}.
\end{remark}


\subsection{Irreducible representations of $K(G,n)$}
In what follows we will fix an embedding of the subfield of algebraic integers in $k$ to $\BC$
such that the near-group category in question is pseudo-unitary (such embedding 
exists by \cite[Theorem 1.3]{Th}) and we will endow the category with a pseudo-unitary spherical structure.

Let $\Irr_0(G)$ be the set of non-trivial irreducible characters of the group $G$.
Let $\rho_\chi: G\to GL(E)$ be a representation of $G$ over $k$ affording character $\chi \in \Irr_0(G)$.
Then $X\mapsto 0, g\mapsto \rho_\chi (g)$ is a well defined homomorphism $\tilde \rho_\chi: K(G,n)\to \End(E)$.
It is clear that $\tilde \rho_\chi$ is an irreducible representation of $K(G,n)$ over $k$. We will denote by $E_\chi$ 
the corresponding $K(G,n)-$module.

Let $d_+$ and $d_-$ be the positive and the negative roots of the equation $d^2=|G|+nd$. Then
the homomorphisms $\rho_\pm : K(G,n)\to k$ with $\rho_\pm (g)=1$ and $\rho_\pm (X)=d_\pm$ are well defined 
(note that $\rho_+$ coincides with the Frobenius-Perron dimension homomorphism). We will denote by $E_\pm$ the
corresponding one dimensional $K(G,n)-$modules.

\begin{lemma} \label{ngirr}
 Any irreducible $K(G,n)-$module over $k$ is isomorphic to either $E_\chi$ for some
$\chi \in \Irr_0(G)$, or to $E_\pm$.
\end{lemma}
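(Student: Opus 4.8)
The plan is to analyze an arbitrary irreducible $K(G,n)$-module $E$ by restricting the action to the group subalgebra $kG\subseteq K(G,n)$ and then using the defining relations $gX=X=Xg$ and $X^2=\sum_{g\in G}g+nX$ to completely pin down how $X$ acts. Writing $\pi\colon K(G,n)\to \End(E)$ for the representation, the whole argument reduces to understanding the single operator $\pi(X)$ relative to the isotypic decomposition of $E$ over $G$.

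First I would decompose $E$ as a $kG$-module into isotypic components $E=\bigoplus_\chi E[\chi]$, indexed by the irreducible characters $\chi$ of $G$; this is available because $\mathrm{char}\,k=0$ (Maschke). The two relations $gX=X$ and $Xg=X$ then control $\pi(X)$ from both sides. From $gX=X$ we get $\pi(g)\pi(X)=\pi(X)$ for all $g$, so every vector in $\Image\pi(X)$ is $G$-fixed, i.e. $\Image\pi(X)\subseteq E[\mathbf{1}]$. From $Xg=X$ we get $\pi(X)\bigl(\pi(g)v-v\bigr)=0$, so $\pi(X)$ annihilates the augmentation part $\bigoplus_{\chi\neq\mathbf{1}}E[\chi]$ (the span of the vectors $\pi(g)v-v$). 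Thus $\pi(X)$ is entirely determined by the operator $T:=\pi(X)\vert_{E[\mathbf{1}]}$, extended by zero on the other components.

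Next I would split according to whether $E[\mathbf{1}]$ vanishes. If $E[\mathbf{1}]=0$, then $\Image\pi(X)=0$, so $\pi(X)=0$ and $E$ is simply an irreducible $kG$-module affording some character $\chi$, which is nontrivial since $E[\mathbf{1}]=0$; this yields $E\cong E_\chi$ with $\chi\in\Irr_0(G)$ (consistency of the relation $X^2=\sum_g g+nX$ is automatic because $\sum_g\rho_\chi(g)=0$ for nontrivial $\chi$). If instead $E[\mathbf{1}]\neq 0$, I would first show $E=E[\mathbf{1}]$: any nontrivial component $E[\chi]$, $\chi\neq\mathbf{1}$, is $G$-stable and is annihilated by $\pi(X)$, hence is a $K(G,n)$-submodule, and it would be proper (as $E[\mathbf{1}]\neq 0$), contradicting irreducibility. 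So $G$ acts trivially and $E$ is governed by $T=\pi(X)$ alone. Evaluating $X^2=\sum_g g+nX$ on $E=E[\mathbf{1}]$, where $\sum_g\pi(g)=|G|\,\id$, gives $T^2=|G|\,\id+nT$, so $T$ satisfies $t^2-nt-|G|=0$. Since the discriminant $n^2+4|G|$ is positive, this polynomial has distinct roots $d_+,d_-$, whence $T$ is diagonalizable with eigenvalues in $\{d_+,d_-\}$; as $G$ acts trivially, each $T$-eigenline is a submodule, so irreducibility forces $\dim E=1$ and $E\cong E_\pm$.

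I expect the only genuinely delicate point to be the bookkeeping that identifies the span of the vectors $\pi(g)v-v$ with $\bigoplus_{\chi\neq\mathbf{1}}E[\chi]$, which is what lets me conclude that $\pi(X)$ kills every nontrivial isotypic component and thereby collapse the action of $X$ to the single operator $T$ on $E[\mathbf{1}]$; this step is where complete reducibility of $kG$-modules is essential. Everything after that reduction is elementary linear algebra about the quadratic relation satisfied by $T$.
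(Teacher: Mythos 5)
Your proof is correct, but it proceeds quite differently from the paper's. The paper's argument is a one-line dimension count: the modules $E_\chi$ and $E_\pm$ are pairwise non-isomorphic irreducibles with $\sum_{\chi}\dim(E_\chi)^2+1+1=|G|+1=\dim_k\bigl(K(G,n)\ot k\bigr)$, and since $K(G,n)\ot k$ is a semisimple algebra this exhausts all irreducibles. That route is shorter but takes semisimplicity of $K(G,n)\ot k$ as an input (a general fact about based rings) and requires knowing the candidate list in advance. Your argument instead classifies the irreducibles from scratch: the relations $gX=X=Xg$ force $\Image\pi(X)\subseteq E[\mathbf{1}]$ and $\pi(X)\vert_{\bigoplus_{\chi\neq\mathbf{1}}E[\chi]}=0$, after which irreducibility splits cleanly into the case $E[\mathbf{1}]=0$ (giving $E_\chi$ with $\chi\in\Irr_0(G)$) and the case $E=E[\mathbf{1}]$, where $T=\pi(X)$ satisfies $t^2-nt-|G|=0$ with distinct real roots $d_\pm$, so diagonalizability of $T$ together with the trivial $G$-action forces $\dim E=1$ and $E\cong E_\pm$. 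All the steps check out, including the identification of the span of the vectors $\pi(g)v-v$ with the sum of the nontrivial isotypic components (Maschke) and the observation that $\sum_g\rho_\chi(g)=0$ for nontrivial $\chi$. What your approach buys is self-containedness — it needs no external semisimplicity result and in effect reproves it for $K(G,n)\ot k$ — at the cost of being longer than the paper's counting argument.
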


\begin{proof} We have
$$\sum_{\chi \in \Irr_0(G)}\dim(E_\chi)^2+\dim(E_+)^2+\dim(E_-)^2=2+\sum_{\chi \in \Irr_0(G)}\chi(1)^2=|G|+1.$$
Since the rank of $K(G,n)$ over $\BZ$ is $|G|+1$ the result follows.
\end{proof}

\begin{lemma} \label{ngfc}
 The formal codegrees of irreducible representations of $K(G,n)$ are
$f_{E_\chi}=\frac{|G|}{\chi(1)}$ for $\chi \in \Irr_0(G)$, and $f_{E_\pm}=2|G|+nd_\pm$.
\end{lemma}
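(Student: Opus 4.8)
The plan is to compute the formal codegrees directly from their definition using the explicit data of the irreducible representations already produced. Recall that for an irreducible representation $E$ of a based ring $K$, the formal codegree is the scalar $f_E$ by which the central element $\alpha_E=\sum_i \Tr(b_i,E)\,b_{i^*}$ acts on $E$. For $K(G,n)$ the basis is $\{g : g\in G\}\cup\{X\}$, and the dual involution fixes $X$ (since $X$ is self-dual) and sends $g\mapsto g^{-1}$. So I would first write down $\alpha_E$ explicitly as $\sum_{g\in G}\Tr(g,E)\,g^{-1}+\Tr(X,E)\,X$ and then evaluate its action on $E$, reading off the scalar.

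For the one-dimensional modules $E_\pm$, every $\Tr(g,E_\pm)=\rho_\pm(g)=1$ and $\Tr(X,E_\pm)=d_\pm$. Thus $\alpha_{E_\pm}$ acts as $\sum_{g\in G}\rho_\pm(g^{-1})+d_\pm\,\rho_\pm(X)=|G|+d_\pm^2$. Using the defining relation $d_\pm^2=|G|+nd_\pm$, this becomes $2|G|+nd_\pm$, giving the claimed value $f_{E_\pm}=2|G|+nd_\pm$. For the modules $E_\chi$ with $\chi\in\Irr_0(G)$, the representation sends $X\mapsto 0$, so the $X$-term in $\alpha_{E_\chi}$ drops out entirely and I only need the group part: $\alpha_{E_\chi}$ acts as $\sum_{g\in G}\chi(g^{-1})\,\rho_\chi(g)$. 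Here the standard orthogonality/Schur computation for the central element $\sum_g \chi(g^{-1})\rho_\chi(g)$ shows it acts as the scalar $\frac{|G|}{\chi(1)}$ on the irreducible $E_\chi$, which is exactly $f_{E_\chi}=\frac{|G|}{\chi(1)}$.

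I expect the $E_\chi$ case to be the only place requiring a genuine representation-theoretic fact rather than a one-line substitution, namely the classical identity that $\sum_{g\in G}\chi(g^{-1})\rho_\chi(g)=\frac{|G|}{\chi(1)}\,\id_E$ for an irreducible $\rho_\chi$ affording $\chi$. This is a direct consequence of Schur's lemma combined with a trace computation: the operator commutes with the $G$-action, so it is scalar, and taking traces gives $\chi(1)\cdot(\text{scalar})=\sum_g \chi(g^{-1})\chi(g)=|G|$ by orthogonality of characters. The main conceptual point — and the only subtlety worth flagging — is simply to observe that since $X$ acts by zero on $E_\chi$, the near-group structure contributes nothing and the codegree is governed purely by the group algebra of $G$; the rest is routine substitution into the defining relation $d^2=|G|+nd$.
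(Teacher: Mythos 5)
Your proof is correct, but it follows a different (equally valid) route from the paper's. The paper's entire argument is the observation that $\sum_i b_ib_{i^*}=|G|+X^2$ in $K(G,n)$, combined with the fact (from \cite[Lemma 2.6]{Od}, as recalled in Remark \ref{trRinv1}) that this element acts on an irreducible $E$ by the scalar $\dim(E)f_E$. With that input the $E_\chi$ case is immediate: $X$ kills $E_\chi$, so $|G|+X^2$ acts by $|G|$, whence $\chi(1)f_{E_\chi}=|G|$ --- no character orthogonality needed. You instead work straight from the definition of the formal codegree via the element $\alpha_E=\sum_i\Tr(b_i,E)b_{i^*}$, which for $E_\chi$ forces you to invoke Schur's lemma plus orthogonality to evaluate $\sum_g\chi(g^{-1})\rho_\chi(g)=\frac{|G|}{\chi(1)}\id$. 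That classical identity is of course correct, and your treatment of $E_\pm$ (substituting $d_\pm^2=|G|+nd_\pm$) coincides with what the paper's method gives. The trade-off: your argument is self-contained modulo elementary character theory, while the paper's is a one-liner that leans on the codegree formalism already set up in \cite{Od}; the two are reconciled by the identity $\sum_i b_ib_{i^*}=\sum_E\dim(E)\,\alpha_E$ in $K\ot k$. One small point worth making explicit in your write-up is that for $E_\chi$ with $\chi(1)>1$ the formal codegree is the scalar by which $\alpha_{E_\chi}$ acts (not that scalar divided by $\dim E_\chi$); you do use the correct convention, and it matches the paper's definition.
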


\begin{proof} We have $\sum_ib_ib_{i^*}=|G|+X^2$, so the result follows from \cite[Lemma 2.6]{Od}.
\end{proof}

Assume that $\C(G,n)$ is a near-group category with $K(\C(G,n))=K(G,n)$.
The following result follows immediately from Theorem \ref{I1} and Lemmas \ref{ngirr} and \ref{ngfc}.

\begin{proposition} \label{ngi1}
We have the following decomposition of $I(\be)\in \Z(\C(G,n))$ into the sum of irreducible objects:
$$I(\be)=\be +A_{E_-}+\sum_{\chi \in \Irr_0(G)}\chi(1)A_{E_\chi}$$
where $\dim(A_{E_-})=1+\frac{n}{|G|}d_+$ and $\dim(A_{E_\chi})=\chi(1)(2+\frac{n}{|G|}d_+)$.
\end{proposition}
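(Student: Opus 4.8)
The plan is to feed the enumeration of irreducible $K(G,n)$-modules from Lemma \ref{ngirr} and their formal codegrees from Lemma \ref{ngfc} directly into Theorem \ref{I1}. For the pseudo-unitary spherical category $\C(G,n)$ that theorem supplies the decomposition
$$I(\be)=\bigoplus_{E\in \Irr(K(\C(G,n)))}A_E^{\oplus \dim(E)},\qquad \dim(A_E)=\frac{\dim(\C)}{f_E}.$$
By Lemma \ref{ngirr} the index set is $\{E_+,E_-\}\cup\{E_\chi\}_{\chi\in\Irr_0(G)}$, with $\dim(E_\pm)=1$ and $\dim(E_\chi)=\chi(1)$, which already produces the claimed multiplicities $1,1,\chi(1)$. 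It then remains to identify $A_{E_+}$ with $\be$ and to evaluate the three dimensions.

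For the dimensions I would record two elementary facts. First, since we use the pseudo-unitary spherical structure, categorical and Frobenius--Perron dimensions agree, so $\dim(\C)=|G|+d_+^2$; applying the defining relation $d_\pm^2=|G|+nd_\pm$ to Lemma \ref{ngfc} rewrites the codegrees as $f_{E_\pm}=2|G|+nd_\pm=|G|+d_\pm^2$, so in particular $f_{E_+}=\dim(\C)$ and $\dim(A_{E_+})=1$. Second, Vieta's relations $d_++d_-=n$ and $d_+d_-=-|G|$ give $|G|^2=d_+^2d_-^2$, whence
$$\dim(A_{E_-})=\frac{|G|+d_+^2}{|G|+d_-^2}=\frac{d_+^2}{|G|}=1+\frac{n}{|G|}d_+,$$
the last step using $d_+^2=|G|+nd_+$. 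The same relation gives $\dim(A_{E_\chi})=\dim(\C)\chi(1)/|G|=\chi(1)\bigl(1+\tfrac{d_+^2}{|G|}\bigr)=\chi(1)\bigl(2+\tfrac{n}{|G|}d_+\bigr)$.

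The only point that is not purely formal is the identification $A_{E_+}=\be$. Here $E_+$ is the dimension homomorphism of $K(\C)$, and the character of $K(\Z(\C))$ through which it factors (via the forgetful map $K(\Z(\C))\to K(\C)$) is $[A]\mapsto \dim(F(A))=\dim(A)$, i.e.\ the global dimension homomorphism of $K(\Z(\C))$. Under the bijection $\O(\Z(\C))\to\Irr(K(\Z(\C)))$ of Example \ref{moduex} this is exactly the character $\hat E_{\be}$ attached to the unit object, since $s_{\be,A}=\dim(A)$. Hence $A_{E_+}=\be$, which is consistent with $\dim(A_{E_+})=1$ and with $\be$ occurring in $I(\be)$ with multiplicity one. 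Assembling the three contributions yields precisely the stated decomposition, so I expect the identification of $A_{E_+}$ to be the main (and only mildly subtle) step, the rest being the routine algebra of the quadratic $d^2=|G|+nd$.
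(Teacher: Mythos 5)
Your proposal is correct and follows exactly the paper's route: the paper derives Proposition \ref{ngi1} "immediately" from Theorem \ref{I1} together with Lemmas \ref{ngirr} and \ref{ngfc}, and your write-up simply makes explicit the same multiplicity count, the identification $A_{E_+}=\be$, and the quadratic-relation algebra $d_\pm^2=|G|+nd_\pm$ that the paper leaves to the reader. All of your computations (including $f_{E_\pm}=|G|+d_\pm^2$ and $\dim(A_{E_-})=d_+^2/|G|$) check out.
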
 \qed
 
 We can now prove the first main result of this Section:
 
 \begin{theorem} \label{A1}
 Assume that the ring $K(G,n)$ is categorifiable and $\dim(X)=d_+$ is irrational. Then 

(i) $n$ is divisible by $|G|$;

(ii) the group $G$ is abelian.
 \end{theorem}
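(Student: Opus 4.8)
The plan is to read everything off the decomposition of $I(\be)$ in Proposition \ref{ngi1}, push it through the forgetful functor $F$ using Proposition \ref{FI} and Frobenius reciprocity, and then match rational parts against irrational parts of dimensions, the irrationality of $d_+$ doing all the real work.

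For part (i), I would first decompose the object $F(A_{E_-})\in\C$ into simples, writing $F(A_{E_-})=\sum_{g\in G}c_g\,g+c_X X$ with $c_g,c_X\in\BZ_{\ge0}$. By Proposition \ref{ngi1} we have $\dim(A_{E_-})=1+\frac{n}{|G|}d_+$, so taking dimensions gives $\sum_g c_g+c_X d_+=1+\frac{n}{|G|}d_+$. Since $d_+$ is irrational, $1$ and $d_+$ are linearly independent over $\BQ$, and comparing the irrational parts forces $c_X=\frac{n}{|G|}$. As $c_X$ is a non-negative integer this yields $|G|\mid n$, which is (i). Comparing the rational parts gives $\sum_g c_g=1$; combining this with the Frobenius reciprocity identity $[F(A_{E_-}):\be]=\dim\Hom_\Z(A_{E_-},I(\be))=[I(\be):A_{E_-}]=1$, I conclude $c_e=1$ and $c_g=0$ for $g\neq e$, i.e. $F(A_{E_-})=\be+\frac{n}{|G|}X$. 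This last fact is what makes (ii) work, so I record it now.

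For part (ii), set $k=n/|G|\in\BZ$. I would record two computations. On one hand, Proposition \ref{FI} (with $X$ self-dual, so $X^*=X$) gives $F(I(\be))=\bigoplus_{g\in G}g\otimes g^{-1}\oplus X\otimes X^*=(|G|+1)\be\oplus\bigoplus_{g\neq e}g\oplus nX$, so that $[F(I(\be)):g]=1$ for every $g\neq e$. On the other hand, decomposing $F(A_{E_\chi})$ as above and using $\dim(A_{E_\chi})=\chi(1)(2+kd_+)$ together with $[F(A_{E_\chi}):\be]=[I(\be):A_{E_\chi}]=\chi(1)$, I find that the total multiplicity of invertibles in $F(A_{E_\chi})$ is the rational part $2\chi(1)$, so the total multiplicity of the \emph{nontrivial} invertibles is $2\chi(1)-\chi(1)=\chi(1)$. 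In particular, if $\chi(1)\ge2$ then $F(A_{E_\chi})$ contains some $g_0\neq e$. Now apply $F$ to the decomposition of Proposition \ref{ngi1}: since $F(\be)=\be$ and $F(A_{E_-})=\be+kX$ contain no nontrivial invertible, for each $g\neq e$ I obtain the bookkeeping identity $\sum_{\chi\in\Irr_0(G)}\chi(1)\,[F(A_{E_\chi}):g]=[F(I(\be)):g]=1$. Every term is a non-negative integer with $\chi(1)\ge1$, so any $\chi$ with $[F(A_{E_\chi}):g]\neq0$ for some $g\neq e$ must have $\chi(1)=1$. But a $\chi$ with $\chi(1)\ge2$ was just shown to contribute such a $g$, a contradiction. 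Hence $\chi(1)=1$ for all $\chi\in\Irr_0(G)$, i.e. $G$ is abelian. (Equivalently, $I(\be)$ is then multiplicity free and $K(\C)$ is commutative, cf. Example \ref{xuex}.)

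The main obstacle is organizational rather than deep: everything rests on extracting, via Proposition \ref{FI} and Frobenius reciprocity, the precise multiplicities $[F(A_{E_-}):\be]$, $[F(A_{E_\chi}):\be]$ and the total invertible content of each $F(A_{E_\chi})$, and then on the identity $\sum_\chi\chi(1)[F(A_{E_\chi}):g]=1$. The one genuinely indispensable ingredient is the irrationality of $d_+$, used first to force $c_X=n/|G|\in\BZ$ in (i) and again, implicitly, to separate the rational and irrational parts of $\dim(A_{E_\chi})$ in (ii); without it the dimension matching collapses and the argument must fail, as indeed it should, since integral near-group categories need not have abelian $G$ (cf. Example \ref{s4ex}(iv)).
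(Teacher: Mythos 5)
Your argument is correct and is essentially the paper's own proof: part (i) is the same rational/irrational comparison of $\dim(A_{E_-})=1+\tfrac{n}{|G|}d_+$ against the simple decomposition of $F(A_{E_-})$, and part (ii) uses the same three ingredients (the rational part of $\dim(A_{E_\chi})$ giving the invertible content, $[F(A_{E_\chi}):\be]=\chi(1)$ from Theorem \ref{I1}, and the fact that each nontrivial invertible occurs exactly once in $F(I(\be))$ by Proposition \ref{FI}). The only cosmetic difference is that you phrase the final contradiction as the bookkeeping identity $\sum_\chi\chi(1)[F(A_{E_\chi}):g]=1$ rather than noting directly that a nontrivial invertible in $F(A_{E_\chi})$ would occur at least $\chi(1)>1$ times in $F(I(\be))$.
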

 
\begin{remark} In the setting of $C^*-$categories it was proved by Evans and Gannon (\cite{EG})
that (i) holds if $G$ is abelian.
\end{remark}

 \begin{proof} (i) Assume that $F(A_{E_-})$ is a sum of $\alpha$ invertible objects and $\beta$ 
 copies of $X$
(thus $\alpha, \beta \in \BZ_{\ge 0}$). Then $\dim(A_{E_-})=\alpha +\beta d_+$. On the other hand by Proposition \ref{ngi1}
$\dim(A_{E_-})=1+\frac{n}{|G|}d_+$. Since $d_+$ is irrational this implies that $\beta =\frac{n}{|G|}$ and the result follows.

(ii) Assume that the conclusion fails. Then there exists $\chi \in \Irr_0(G)$ with
 $\chi(1)>1$. Then $\dim(A_{E_\chi})=2\chi(1)+\frac{n\chi(1)}{|G|}d_+$. By the argument in (i) we get that
$[F(A_{E_\chi}):X]=\frac{n\chi(1)}{|G|}$. Also by Theorem \ref{I1} we have $[F(A_{E_\chi}):\be]=\chi(1)$. 
Since $[F(A_{E_\chi}):X]\dim(X)+[F(A_{E_\chi}):\be]\dim(\be)=\chi(1)+\frac{n\chi(1)}{|G|}d_+<\dim(A_{E_\chi})$,
we see that $F(A_{E_\chi})$ contains at least one nontrivial invertible object. Thus this object appears in $F(I(\be))$
at least $\chi(1)>1$ times. But this is a contradiction since by Proposition \ref{FI} any nontrivial invertible object
appears in $F(I(\be))$ exactly once.
 \end{proof}

\begin{remark} The argument in (ii) sometimes applies even if $d_+\in \BQ$, for example in the case $n=0$. Thus
we get yet another proof that for a Tamabara-Yamagami category $\C(G,0)$ the group $G$ must be abelian, cf \cite[Corollary 3.3]{TY} and \cite[Proposition 9.3 (i)]{ENOh}.
\end{remark}

\subsection{Integral case} 
\begin{lemma} \label{nginin}
Assume that $\C(G,n)$ is integral. Then $n<|G|$.
\end{lemma}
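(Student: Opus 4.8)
The plan is to extract the inequality directly from Vieta's formulas for the quadratic $d^2 = |G| + nd$, using only that both of its roots are integers in the integral case, together with the a priori constraint $n \in \BZ_{\ge 0}$ coming from the fusion rules. First I would record that integrality of $\C(G,n)$ means precisely that $d_+$ and $d_-$ both lie in $\BZ$ (indeed $d_+ \in \BZ$ already forces $d_- = n - d_+ \in \BZ$). Rewriting the defining equation as $d^2 - nd - |G| = 0$, the sum and product of its roots are $d_+ + d_- = n$ and $d_+ d_- = -|G|$.

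Since $|G| \ge 1$, the product $d_+ d_-$ is strictly negative, so exactly one root is positive; this is $d_+ = \dim(X) = \FPdim(X) > 0$, while $d_-$ is a negative integer, whence $|d_-| \ge 1$. I would then read off the two needed estimates. From the sum, $n = d_+ + d_- = d_+ - |d_-| \le d_+ - 1 < d_+$. From the product, $|G| = d_+\,|d_-| \ge d_+$, so $d_+ \le |G|$. Chaining these two bounds gives $n < d_+ \le |G|$, which is exactly the claim.

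I do not expect a genuine obstacle here: once integrality supplies $d_\pm \in \BZ$, the argument is purely arithmetic, and the only points requiring a line of care are the sign analysis (identifying $d_+$ as the positive root and noting $d_- \le -1$) and the use of the ambient fact $n \ge 0$, which ensures $d_+ \ge |d_-|$ and keeps all the inequalities consistent. It is worth noting in passing that the bound is sharp: the extreme case $|d_-| = 1$ forces $n = |G| - 1$, realized for instance by the category of Example \ref{s4ex}(i), where $|G| = q-1$ and $n = q-2$.
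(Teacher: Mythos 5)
Your argument is correct and is essentially the paper's own proof: the paper writes $|G|=d_+(d_+-n)$ with $d_+\in\BZ_{\ge 0}$ and reads off $n=d_+-(d_+-n)<d_+\le |G|$, which is exactly your Vieta computation since $d_+-n=-d_-=|d_-|$. The only cosmetic difference is that you phrase the factorization in terms of the two roots $d_\pm$ rather than directly as $d_+(d_+-n)$.
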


\begin{proof}
We have $|G|=d_+(d_+-n)$ where $d_+\in \BZ_{\ge 0}$. Hence 
$$n=d_+-(d_+-n)<d_+\le |G|.$$
\end{proof}

\begin{corollary} \label{ngin}
Assume that $\C(G,n)$ is integral. Then $[F(A_{E_-}):X]=0$. 
\end{corollary}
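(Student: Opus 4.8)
The plan is to run a dimension count inside $\Z(\C(G,n))$, comparing the two descriptions of $\dim(A_{E_-})$ that are already available. First I would pin down the multiplicity of the unit object: applying Theorem \ref{I1} together with the adjunction $\Hom(F(A_{E_-}),\be)=\Hom(A_{E_-},I(\be))$, the multiplicity $[F(A_{E_-}):\be]$ equals $[I(\be):A_{E_-}]=\dim(E_-)$. Since $E_-$ is one of the one-dimensional modules $E_\pm$ introduced before Lemma \ref{ngirr}, we have $\dim(E_-)=1$, so $\be$ occurs in $F(A_{E_-})$ with multiplicity exactly $1$. Consequently, writing $F(A_{E_-})\in \C$ as a direct sum of invertible objects and of copies of $X$, the invertible part is nonempty and contributes total dimension at least $1$, because every invertible object has dimension $1$ in the fixed pseudo-unitary spherical structure.

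Next I would introduce $\beta=[F(A_{E_-}):X]$, the integer we wish to show is zero, and let $\alpha\ge 1$ be the number of invertible summands of $F(A_{E_-})$. Since the forgetful functor $F\colon \Z(\C)\to \C$ preserves dimensions, $\dim(A_{E_-})=\dim(F(A_{E_-}))=\alpha+\beta d_+$, where $d_+=\dim(X)>0$. On the other hand, Proposition \ref{ngi1} supplies the closed formula $\dim(A_{E_-})=1+\frac{n}{|G|}d_+$. Equating the two expressions gives $\alpha+\beta d_+=1+\frac{n}{|G|}d_+$, and using $\alpha\ge 1$ we obtain $\beta d_+\le \frac{n}{|G|}d_+$, that is, $\beta\le \frac{n}{|G|}$.

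Finally I would invoke the integrality hypothesis through Lemma \ref{nginin}, which guarantees $n<|G|$ whenever $\C(G,n)$ is integral. Hence $\frac{n}{|G|}<1$, and as $\beta$ is a nonnegative integer satisfying $\beta\le \frac{n}{|G|}<1$, we are forced to conclude $\beta=0$, which is precisely the assertion $[F(A_{E_-}):X]=0$. I do not expect a genuine obstacle here: the argument is a two-line comparison of dimensions, and the only points deserving a moment's care are that $F$ is dimension-preserving and that the invertible summands already account for a dimension of at least $1$; all the real content has been front-loaded into Proposition \ref{ngi1} and Lemma \ref{nginin}.
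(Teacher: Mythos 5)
Your argument is correct and is essentially the paper's own proof, just written out in more detail: the paper likewise combines Proposition \ref{ngi1} and Lemma \ref{nginin} to get $\dim(A_{E_-})=1+\tfrac{n}{|G|}d_+<1+d_+$ and concludes, the implicit step being exactly your comparison with $\alpha+\beta d_+$ where $\alpha\ge 1$ comes from $[F(A_{E_-}):\be]=\dim(E_-)=1$.
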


\begin{proof} By Proposition \ref{ngi1} and Lemma \ref{nginin} we have 
$\dim(A_{E_-})=1+\frac{n}{|G|}d_+<
1+d_+$. The result follows.
\end{proof}

\begin{theorem} \label{A2}
Assume that $\C(G,n)$ is integral. Then $\C(G,n)$ is weakly group theoretical.
\end{theorem}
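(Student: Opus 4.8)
The plan is to deduce that $\C(G,n)$ is weakly group theoretical from the structure of its Drinfeld center, using the closure properties of the class of weakly group theoretical categories established in \cite{ENOw} (closure under taking duals with respect to module categories, under extensions, and under equivariantization and de-equivariantization). First I would dispose of the degenerate cases. If $|G|=1$ then $\C$ is pointed, and if $n=0$ then $\C$ is a Tambara--Yamagami category, that is a $\BZ/2\BZ$-extension of the pointed category $\Vec_G$; in both situations $\C$ is weakly group theoretical directly from these closure properties. So I may assume $n\ge 1$ and $|G|\ge 2$, which is exactly the case in which the universal grading of $\C$ is trivial and the statement has content.

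The heart of the argument uses the simple object $A:=A_{E_-}\in \Z(\C)$ produced in Proposition \ref{ngi1}. By Theorem \ref{zero} it satisfies $\theta_A=\id_A$, and by Corollary \ref{ngin} its image $F(A)$ is a direct sum of invertible objects of $\C$. Since $F$ is a tensor functor, every object of the fusion subcategory $\langle A\rangle\subseteq \Z(\C)$ generated by $A$ then forgets to a sum of invertibles, so that $F(\langle A\rangle)\subseteq \C_{pt}=\Vec_G^\omega$. Thus $\langle A\rangle$ is a braided fusion subcategory of the modular category $\Z(\C)$, all of whose objects have pointed image and (together with their duals and tensor powers) trivial twist on the summand $A$ itself. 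I would analyze $\langle A\rangle$ together with its M\"uger center in order to locate inside $\Z(\C)$ a nontrivial Tannakian subcategory $\Rep(E)$ with $E\ne 1$ that maps into $\C$ through the forgetful functor. By \cite{DMNO} such a subcategory corresponds to a connected \'etale algebra (the function algebra of $E$) in $\Z(\C)$, and realizing it as a central copy of $\Rep(E)$ inside $\C$ exhibits $\C$ as an equivariantization $\C\simeq \D^{E}$ with $\FPdim(\D)=\FPdim(\C)/|E|$.

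Granting such a central $\Rep(E)$, the conclusion would follow: equivariantization and de-equivariantization preserve weak group theoreticity, so $\C$ is weakly group theoretical as soon as the de-equivariantization $\D=\C_{E}$ is, and $\FPdim(\D)<\FPdim(\C)$. The main obstacle is precisely this middle step. One must guarantee that $\Z(\C)$ contains a genuinely \emph{Tannakian} (symmetric, with trivial twists and no sign) subcategory rather than merely a braided one with pointed image, and one must then control $\D$, which in general is \emph{not} itself a near-group category, so a naive induction within the near-group class is unavailable. I expect the resolution to combine the constraints $\theta_A=\id_A$ and $F(\langle A\rangle)\subseteq \C_{pt}$ with the explicit dimension formulas of Proposition \ref{ngi1} (which in the integral case force $d_+-n$ to divide $n$) in order to pin down $E$, and then either to show directly that $\D$ is pointed or nilpotent, so that a single de-equivariantization suffices, or to set the induction over the whole class of integral fusion categories. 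An entirely parallel route, which I would pursue if the central $\Rep(E)$ proves elusive, is to show instead that some Morita dual $\C^{*}_{\M}$ is nilpotent and invoke Morita invariance of weak group theoreticity.
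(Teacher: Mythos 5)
Your proposal correctly identifies the starting point (the object $A_{E_-}$ with $\theta_{A_{E_-}}=\id$ and $[F(A_{E_-}):X]=0$ coming from Proposition \ref{ngi1} and Corollary \ref{ngin}, plus the reduction to $n>0$ via the Tambara--Yamagami case), but the step you yourself flag as ``the main obstacle'' is exactly where the proof lives, and nothing you propose closes it. First, you never actually produce a Tannakian subcategory. The paper does not work with the subcategory generated by $A_{E_-}$; it takes the a priori larger subcategory $\D\subset\Z(\C(G,n))$ of \emph{all} objects whose image under $F$ avoids $X$, observes that $F|_{\D}$ is not injective (since $F(A_{E_-})$ is decomposable), invokes \cite[Theorem 3.12]{DNO} to conclude that $F$ restricted to the M\"uger centralizer $\D'$ is not surjective, and uses the fact that $X$ generates $\C(G,n)$ to force $\D'\subset\D$, i.e.\ $\D'$ is symmetric. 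Tannakian-ness (as opposed to super-Tannakian) is then settled by a dimension count: a hypothetical index-$2$ subcategory $\tilde\D'\subset\D'$ would have centralizer $\tilde\D\supset\D$ with $\dim(\tilde\D)=2\dim(\D)$, while surjectivity of $F|_{\tilde\D}$ together with $\dim(\C(G,n))>2\dim(\Vec_G)$ forces $\dim(\tilde\D)>2\dim(\D)$. None of this machinery appears in your sketch, and the twist condition $\theta_{A_{E_-}}=\id$ alone does not give you a symmetric, let alone Tannakian, subcategory.

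Second, your endgame is structurally different from, and harder than, what is needed. You want $\Rep(E)$ to embed into $\C$ itself so as to write $\C$ as an equivariantization and then induct on dimension; as you admit, the de-equivariantization leaves the near-group class, so the induction has no usable hypothesis, and your fallbacks (that the de-equivariantized category is pointed or nilpotent, or that some Morita dual is nilpotent) are unsubstantiated guesses. The paper instead applies the criterion of \cite[Proposition 4.2]{ENOw} directly to the Tannakian subcategory $\D'\subset\Z(\C(G,n))$: the fiber category $\D\bt_{\D'}\Vec$ is braided equivalent to $\Z(\A)$ for some fusion category $\A$, and $\A$ is shown to be group theoretical because the central functor $\D\to\Vec_G$ yields a braided functor $\D\to\Z(\Vec_G)$ through which the canonical functor $\D\to\D\bt_{\D'}\Vec$ factors; this produces the required Lagrangian subcategory with no induction at all. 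As it stands your proposal is a plan with the two decisive steps missing, not a proof.
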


\begin{proof}
The Tambara-Yamagami categories $\C(G,0)$ are $\Z/2\BZ-$extensions of pointed categories,
so they are weakly group theoretical by \cite[Definition 1.1]{ENOw}. Thus we will assume that $n>0$
in the rest of this proof. In particular we have
$$\dim(\C(G,n))=2|G|+nd_+^2>2|G|=2\dim(\Vec_G).$$

Let us consider full subcategory $\D$ of $\Z(\C(G,n))$ consisting of objects  $A$ such that
$F(A)\in \C(G,n)$ is a sum of invertible objects (equivalently $[F(A):X]=0$).
It is clear that $\D$ is a fusion subcategory of $\Z(\C(G,n))$. Corollary \ref{ngin} says that $A_{E_-}\in \D$.
In particular, the restriction of the forgetful functor $F$ to $\D$ is not {\em injective} since 
$F(A_{E_-})$ is not simple. Let $\D'$ be the {\em M\"uger centralizer} of $\D$ in $\Z(\C(G,n))$,
see \cite[Definition 2.6]{Mm}.
It follows from \cite[Theorem 3.12]{DNO} that the restriction of $F$ to $\D'$ is not {\em surjective}. 
Since the object $X$
generates $\C(G,n)$ this implies that for any $A\in \D'$ we have $[F(A):X]=0$. In other words
$\D'\subset \D$ or, equivalently, the category $\D'$ is symmetric.

\begin{lemma} The symmetric category $\D'$ is Tannakian.
\end{lemma}

\begin{proof} For the sake of contradiction assume that $\D'$ is not Tannakian. Then by
\cite[Corollary 2.50]{DGNO} $\D'$ contains a fusion subcategory $\tilde \D'$ with $\dim(\tilde \D')=\frac12\dim(\D')$.
Let $\tilde \D$ be the M\"uger centralizer of $\tilde \D'$ in $\Z(\C(G,n))$, see \cite[Definition 2.6]{Mm}.
Then $\tilde \D \supset \D$ and $\dim(\tilde \D)=2\dim(\D)$, see \cite[Theorem 3.14]{DGNO}. 

By definition of category $\D$ there
exists an object $A\in \tilde \D$ with $[F(A):X]\ne 0$; hence the forgetful functor $F: \tilde \D \to
\C(G,n)$ is surjective. Similarly to \cite[proof of Corollary 8.11]{ENO} we obtain
$$\dim(\tilde \D)=\dim(\C(G,n))\left( \sum_{A\in \O(\tilde \D)}\dim(A)[F(A):\be]\right)>$$
$$2\dim(\Vec_G)\left( \sum_{A\in \O(\D)}\dim(A)[F(A):\be]\right)\ge 2\dim(\D).$$
This is a desired contradiction.
\end{proof}

Let us consider the {\em fiber category} $\D\bt_{\D'}\Vec$ associated with Tannakian subcategory
$\D'\subset \Z(\C(G,n))$.
It follows from \cite[Example 3.14 and Proposition 5.4]{DMNO} that there exists a fusion
category $\A$ and braided equivalence $\D\bt_{\D'}\Vec \simeq \Z(\A)$. 

\begin{lemma} \label{Agt}
The category $\A$ is group theoretical.
\end{lemma}

\begin{proof} The restriction of the forgetful functor $F:  \Z(\C(G,n))\to  \C(G,n)$ to the subcategory
$\D$ gives a central functor (see \cite[Definition 2.4]{DMNO}) $\D \to \Vec_G\subset \C(G,n)$.
Thus there exists a braided functor $\D \to \Z(\Vec_G)$. Let $\D_1$ denote the image of this
functor. Then $\D_1$ is group theoretical category as a subcategory of group theoretical
category $\Z(\Vec_G)$, see \cite[Proposition 8.44 (i)]{ENO}. On the other hand it follows from
\cite[Corollary 3.24]{DMNO} that the canonical functor $\D \to \D\bt_{\D'}\Vec$ factors through
$\D\to \D_1$. Thus the category $\D\bt_{\D'}\Vec =\Z(\A)$ is group theoretical by
\cite[Proposition 8.44 (ii)]{ENO}. Since the forgetful functor $\Z(\A)\to \A$ is surjective 
(see e.g. \cite[Proposition 5.4]{ENO}) we  conclude that $\A$ is group theoretical 
by \cite[Proposition 8.44 (ii)]{ENO}.
\end{proof}

It follows from Lemma \ref{Agt} that the fiber category $\D\bt_{\D'}\Vec$ contains 
a Lagrangian subcategory, see \cite[Theorem 4.64]{DGNO}.
Hence Theorem holds by \cite[Proposition 4.2]{ENOw}.
\end{proof}

It would be interesting to give a classification of integral near-group categories in group theoretical terms. In particular it is not known to the authors whether examples of non group theoretical
near-group categories $\C(G,n)$ with $n>0$ exist (it is known that such examples exist when $n=0$,
see \cite[Remark 8.48]{ENO}).

\begin{theorem} \label{A3}
Let $\C(G,n)$ be a near-group category

{\em (i) (cf \cite[Theorem 1.1]{Sie})} 
Assume that $0<n<|G|-1$. Then $G$ is not abelian.

{\em (ii) (cf \cite[Theorem 1.2]{Sie})} Assume that $n=|G|-1$. Then $G$ is cyclic.
\end{theorem}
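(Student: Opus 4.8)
The plan is to treat the two assertions separately, in both cases exploiting the decomposition of $I(\be)$ in Proposition \ref{ngi1} together with the identity $F(I(\be))=\bigoplus_{Z\in\O(\C)}Z\otimes Z^{*}$ from Proposition \ref{FI}. Since $\O(\C)=G\cup\{X\}$, $g\otimes g^{*}=\be$ and $X\otimes X^{*}=X^{2}=\sum_{g}g+nX$, we get $F(I(\be))=|G|\be\oplus\bigoplus_{g\in G}g\oplus nX$; thus $X$ occurs in $F(I(\be))$ with multiplicity exactly $n$, while each nontrivial invertible occurs exactly once. These two bookkeeping facts drive everything.

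For (i) I would argue by contradiction, assuming $G$ abelian and $0<n<|G|-1$. If $d_{+}=\dim(X)$ is irrational, Theorem \ref{A1}(i) forces $|G|\mid n$, impossible as $0<n<|G|$; so $\C$ is integral, $d_{+}\in\BZ$, and by Lemma \ref{nginin} we may set $k=d_{+}-n\ge 1$ with $|G|=d_{+}k$. By Corollary \ref{ngin} the object $A_{E_{-}}$ forgets to a sum of invertibles, so $\dim(A_{E_{-}})=1+\tfrac{n}{|G|}d_{+}=d_{+}/k$ is a positive integer (it is the number of invertible summands), and likewise $\dim(A_{E_{\chi}})=1+d_{+}/k$ for every $\chi$ since $\chi(1)=1$. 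If $k\ge 2$ then $\dim(A_{E_{-}})=d_{+}/k<d_{+}$ and, because $k\ge2,\ n\ge1$ give $d_{+}=k+n\ge3$, also $\dim(A_{E_{\chi}})=1+d_{+}/k<d_{+}$; hence neither $A_{E_{-}}$ nor any $A_{E_{\chi}}$ can contain $X$ in its forgetful image, forcing $X$ to occur in $F(I(\be))$ with multiplicity $0$ and contradicting the multiplicity $n>0$ above. Therefore $k=1$, i.e. $n=|G|-1$, contradicting $n<|G|-1$. This completes (i).

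For (ii) the opening reductions are clean. When $n=|G|-1$ the equation $d^{2}=|G|+nd$ gives $d_{+}=|G|$ (so $\C$ is integral) and $\dim(\C)=|G|(|G|+1)$. Then $\dim(A_{E_{-}})=|G|$, and since $[F(A_{E_{-}}):X]=0$ while $\be$ occurs once and each nontrivial invertible at most once in $F(A_{E_{-}})$, a dimension count forces $F(A_{E_{-}})=\bigoplus_{g\in G}g$. As each nontrivial invertible is now already used up with its full multiplicity $1$, none can appear in any $F(A_{E_{\chi}})$, and comparing dimensions yields $F(A_{E_{\chi}})=\chi(1)(\be\oplus X)$ for every $\chi\in\Irr_0(G)$. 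It remains to show $G$ is abelian and then cyclic.

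The hard part, and the genuine obstacle, is exactly this last step. The twist identities for $I(X)=\sum_{\chi}\chi(1)A_{E_{\chi}}\oplus\bigoplus_{j}\gamma_{j}B_{j}$ are by themselves insufficient: since $\theta_{A_{E_{\chi}}}=1$ and $\sum_{\chi}\chi(1)^{2}=|G|-1$ is fixed, the equations $\Tr(\theta_{I(X)})=0$ and $\Tr(\theta_{I(X)}^{2})=\pm\dim(\C)$ (Theorems \ref{zero} and \ref{FS}, the latter valid as $X$ is self-dual) only constrain the remaining central objects $B_{j}$ over $X$ and cannot detect the partition of $|G|-1$ into the $\chi(1)^{2}$; thus they do not distinguish abelian from nonabelian $G$. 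The distinction must instead be read from the tensor structure of $\Z(\C)$, for instance from the objects $I(g)$ with $g\ne e$, whose forgetful images involve the conjugation action $h\mapsto hgh^{-1}$. My plan is therefore to invoke Theorem \ref{A2}: $\C$ is integral, hence weakly group theoretical, and I would upgrade this to group‑theoreticity and identify $\C$ with a category of the form $\Rep(V\rtimes C)$ in which $C$ acts on $\widehat V$ with a single nontrivial orbit (so that the induced object $X$ is simple and $d_{+}=\dim X=|V|-1=|G|$), the invertibles then being $\widehat C\cong C$. Transitivity of $C$ on $\widehat V\setminus\{1\}$, together with the classification in \cite[Corollary 7.4]{EGO}, would force $C$, and hence $G$, to be cyclic. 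I expect the recognition step—translating the purely numerical dimension and multiplicity data obtained above into this specific group‑theoretical form—to be the main difficulty, precisely because the invariants furnished by Theorems \ref{I1}, \ref{zero} and \ref{FS} are blind to the abelian/nonabelian distinction.
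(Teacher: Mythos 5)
Your part (i) is correct and is essentially the paper's argument: both proofs rest on the observation that $X$ occurs in $F(I(\be))$ with multiplicity $n>0$, while the hypothesis $n<|G|-1$ forces $1+\frac{n}{|G|}d_+<d_+$, so that neither $A_{E_-}$ nor any $A_{E_\chi}$ with $\chi(1)=1$ can carry $X$ in its forgetful image; hence some $\chi(1)>1$. (The paper derives the inequality directly from $n<|G|-1$ without splitting into the irrational and integral cases, so your detour through Theorem \ref{A1} and Lemma \ref{nginin} is harmless but unnecessary.)

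Part (ii), however, is not a proof. You correctly compute $F(A_{E_-})=\bigoplus_{g\in G}g$ and $F(A_{E_\chi})=\chi(1)(\be\oplus X)$, and you correctly diagnose that the twist and indicator identities cannot finish the argument; but the step you then propose --- invoke Theorem \ref{A2}, ``upgrade'' weak group-theoreticity to group-theoreticity, and identify $\C$ with some $\Rep(V\rtimes C)$ --- is precisely the missing content. Weak group-theoreticity does not upgrade to group-theoreticity in general, and you give no mechanism for the recognition step you yourself flag as the main difficulty. The paper closes this gap as follows. The equality $F(A_{E_-})=\bigoplus_{g}g$ makes the restriction of $F$ to the subcategory $\D\subset\Z(\C(G,n))$ of objects whose forgetful image is a sum of invertibles surjective onto $\Vec_G$; the dimension formula from \cite[proof of Corollary 8.11]{ENO} then yields $\dim(\D)=|G|(|G|+1)=\dim(\C(G,n))$, so $\D=\D'$ is a Lagrangian subcategory of $\Z(\C(G,n))$ and $\C(G,n)$ is genuinely group theoretical by \cite[Theorem 4.64]{DGNO} and \cite[Theorem 3.1]{ENOw}. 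Writing $\D\simeq\Rep(\tilde G)$, the space of homomorphisms from $I(\be)$ to the regular algebra of $\D$ has dimension $|G|+1$, so by Remark \ref{funMN} the module category realizing the weak Morita equivalence with $\Vec_{\tilde G}^{\omega}$ has exactly $|G|+1$ simple objects; hence $\C(G,n)\simeq\C(\tilde G,\tilde H,\omega,\psi)$ with $|\tilde G|=|G|(|G|+1)$ and $|\tilde H|=|G|$. Since every proper fusion subcategory of $\C(G,n)$ is pointed, $\Rep(\tilde H)$ is pointed, so $\tilde H$ is abelian and $\tilde G$ is the union of exactly two $(\tilde H,\tilde H)$-double cosets, i.e.\ $\tilde G$ is a Frobenius group with complement $\tilde H$; the argument of \cite[proof of Corollary 7.4]{EGO} then places $\C(G,n)$ on the list of \cite[Corollary 7.4]{EGO}, whence $G$ is cyclic. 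None of these steps (the Lagrangian dimension count, the $|G|+1$ count of simples via Remark \ref{funMN}, the Frobenius-group structure on $\tilde G$) appears in your proposal, and the final appeal to \cite[Corollary 7.4]{EGO} needs that structure rather than mere transitivity of an action, so the gap is genuine.
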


\begin{proof}
(i) It is elementary to show that the assumption implies $1+\frac{n}{|G|}d_+<d_+$.
Hence for any $\chi \in \Irr_0(G)$ with $\chi(1)=1$ we have $[F(A_{E_\chi}):X]=0$. Recall
that by Corollary \ref{ngin} $[F(A_{E_-}):X]=0$. Thus there exists $\chi \in \Irr_0(G)$ with $\chi(1)>1$,
so the group $G$ is non abelian.

(ii) In this case $\dim (A_{E_-})=|G|$, so $[F(A_{E_-})]=\sum_{g\in G}g$.
Hence the forgetful functor $\D \to \Vec_G$ is surjective (see proof of Theorem \ref{A2} for the 
definition of $\D$). Using \cite[proof of Corollary 8.11]{ENO}
we find that $\dim(\D)=|G|(|G|+1)=\dim(\C(G,n))$. Hence by \cite[Theorem 3.2]{Mm} we have $\D'=\D$. Thus $\D$
is Lagrangian subcategory of $\Z(\C(G,n))$ and the category $\C(G,n)$ is group theoretical, see 
\cite[Theorem 4.64]{DGNO} and \cite[Theorem 3.1]{ENOw}. Let $\tilde G$ be a finite group
such that we have a braided equivalence $\D \simeq \Rep(\tilde G)$. Observe that the space 
of homomorphisms between
$I(\be)$ and the regular algebra (see \cite[Example 2.8]{DMNO}) of $\D$ has dimension $|G|+1$. 
Thus by Remark \ref{funMN} there is a module category providing weak Morita equivalence between
$\C(G,n)$ and $\Vec_{\tilde G}^\omega$ with $|G|+1$ simple objects. Equivalently,
$\C(G,n)\simeq \C(\tilde G, \tilde H, \omega, \psi)$ (see \cite[Section 8.8]{ENO} for
the notations) where $|\tilde G|=|G|(|G|+1)$ and $|\tilde H|=|G|$. Since $\Rep (\tilde H)$ is 
subcategory of $\C(\tilde G, \tilde H, \omega, \psi)$ we see that $\tilde H$ is abelian and 
$\tilde G$ is a union of exactly 2 double cosets with respect to $\tilde H$. It follows that 
$\tilde G$ is a Frobenius group with Frobenius complement $\tilde H$. Thus arguing as in
\cite[proof of Corollary 7.4]{EGO} we see that $\C(G,n)$ is one of the categories described 
in \cite[Corollary 7.4]{EGO}. In particular,
the group $G$ must be cyclic in this case.
\end{proof}

\bibliographystyle{ams-alpha}

\end{document}